\DeclareMathOperator{\rk}{rk}
\newtheorem{corollary}{Corollary}
\newtheorem{example}{Example}
\newtheorem{lemma}{Lemma}
\newtheorem{proposition}{Proposition}
\newtheorem{theorem}{Theorem}
\newcommand{\gaussmnum}[3]{\left[\begin{smallmatrix}{#1}\\{#2}\end{smallmatrix}\right]_{#3}}
\newcommand{\gaussmset}[2]{\left[\begin{smallmatrix}{#1}\\{#2}\end{smallmatrix}\right]}
\newcommand{\F}{\ensuremath{\mathbb{F}}}
\newcommand{\Fq}{\ensuremath{\F_q}}
\newcommand{\Fqn}{\ensuremath{V}}
\newcommand{\PFqn}{\ensuremath{P(\Fqn)}}
\newcommand{\Gqnk}{\ensuremath{\gaussmset{\Fqn}{k}}}
\newcommand{\dham}{\mathrm{d}_{\mathrm{h}}}
\newcommand{\rdist}{\mathrm{d}_{\mathrm{r}}}
\newcommand{\sdist}{\mathrm{d}_{\mathrm{s}}}
\newcommand{\idist}{\mathrm{d}_{\mathrm{i}}}
\title{Asymptotic bounds for the sizes of constant dimension codes and an improved lower bound}
\author{Daniel Heinlein and Sascha Kurz\thanks{The work was supported by the ICT COST Action IC1104
and grants KU 2430/3-1, WA 1666/9-1 -- ``Integer Linear Programming Models for Subspace Codes and
Finite Geometry'' -- from the German Research Foundation.}
\\
University of Bayreuth\\
\href{mailto:Daniel.Heinlein@uni-bayreuth.de}{Daniel.Heinlein@uni-bayreuth.de},
\href{mailto:Sascha.Kurz@uni-bayreuth.de}{Sascha.Kurz@uni-bayreuth.de}
}
\begin{document}

\maketitle

\begin{abstract}
  We study asymptotic lower and upper bounds for the sizes of constant dimension codes
  with respect to the subspace or injection distance, which is used in random linear network coding.
  In this context we review known upper bounds and show relations between them. A slightly improved version
  of the so-called linkage construction
  is presented which is e.g.\ used
  to construct constant dimension codes with subspace distance $d=4$, dimension $k=3$ of the codewords
  for all field sizes $q$, and sufficiently large dimensions $v$ of the ambient space, that exceed
  the MRD bound, for codes containing a lifted MRD code, by Etzion and Silberstein.
\end{abstract}

Keywords: constant dimension codes, subspace distance, injection distance, random network coding

\section{Introduction}

Let $V\cong{\F}_q^v$ be a $v$-dimensional vector space over the finite field {\Fq} with $q$ elements.
By {\Gqnk} we denote the set of all $k$-dimensional subspaces in {\Fqn}, where $0\le k\le v$,
which has size $\gaussmnum{v}{k}{q}:= \prod_{i=1}^{k} \frac{q^{v-k+i}-1}{q^i-1}$. More general,
the set ${\PFqn}$ of all subspaces of $V$ forms a metric space with respect to the subspace distance defined by
$\sdist(U,W) = \dim(U+W)-\dim(U \cap W)=\dim(U)+\dim(W)-2\dim(U \cap W)$, see \cite{MR2451015}, and the injection distance
defined by $\idist(U,W)=\max\{\dim(U),\dim(W)\}-\dim(U \cap W)$, see \cite{silva2009metrics}. Coding Theory on {\PFqn} is motivated by K{\"o}tter
and Kschi\-schang \cite{MR2451015} via error correcting random network coding, see  \cite{MR1768542}. In this context it is
natural to consider codes $\mathcal{C}\subseteq P(V)$ where each codeword, i.e., each element of $\mathcal{C}$, has the same dimension $k$,
called \emph{constant dimension code} (cdc), since this knowledge can be exploited by decoders. For constant dimension
codes we have $\sdist(U,W)=2\idist(U,W)$, so that we will only consider the subspace distance in this paper. By $(v,N,d;k)_q$ we denote a cdc in {\Fqn}
with minimum (subspace) distance $d$ and size $N$, where the dimensions of each codeword is $k\in \{0,1,\ldots,v\}$. As usual,
a cdc $\mathcal{C}$ has the \emph{minimum distance} $d$, if $d \le \sdist(U,W)$ for all $U \ne W \in \mathcal{C}$ and equality is attained at least
once. If $\# \mathcal{C}=1$, we set the minimum distance to $\infty$. The corresponding maximum size is denoted by $A_q(v,d;k)$, where we allow the
minimum distance to be larger than $d$. In \cite{MR2451015} the authors provided lower and
upper bounds for $A_q(v,d;k)$ which are less than a factor of $4$ apart. For increasing field size $q$ this factor tends to $1$.
Here, we tighten the corresponding analysis and end up in a factor of less than $2$ for the binary field $q=2$ and a strictly better factor for
larger values of $q$. With respect to lower bounds, we slightly generalize the so-called linkage construction by
Gluesing-Luerssen, Troha / Morrison \cite{MR3543532,gluesing2015cyclic} and Silberstein, (Horlemann-)Trautmann \cite{silberstein2015error}.
This improvement then gives the best known lower bounds for $A_q(v,d;k)$ for many parameters, cf.~the online
tables \url{http://subspacecodes.uni-bayreuth.de} associated with \cite{HKKW2016Tables}. For codes containing a lifted maximum rank
distance (LMRD) code as a subcode an upper bound on the size has been presented in \cite{MR3015712} for some infinite series of parameters.
Codes larger than this MRD bound are very rare. Based on the improved linkage construction we give an infinite series of such
examples.

In this context we mention the following asymptotic result based on the non-constructive probabilistic method. If the subspace distance
$d$ and the dimension $k$ of the codewords is fixed, then the ratio of the lower and upper bound tends to $1$ as the dimension $v$ of the
ambient space approaches infinity, see \cite[Theorem~4.1]{MR829351}, which is implied by a more general result of Frankl and R\"odl on hypergraphs.
The same result, with an explicit error term, was also obtained in \cite[Theorem~1]{blackburn2012asymptotic}. If $d$ and $v-k$ is fixed we have the
same result due to the orthogonal code. If the parameter $k$ can vary with the dimension $v$, then our asymptotic analysis implies there is still
a gap of almost $2$ between the lower and the upper bound of the code sizes for $d=4$ and $k=\left\lfloor v/2\right\rfloor$, which is the worst case.

The remaining part of the paper is organized as follows. In Section~\ref{sec_preliminaries} we collect the basic facts and definitions
for constant dimension codes. Upper bounds on the achievable codes sizes are reviewed in Section~\ref{sec_bounds}. Here, we partially extend
the current knowledge on the relation between these bounds. While most of them are known around 2008 there are some recent improvements
for the subclass of partial spreads, where $d=2k$, which we summarize in Subsection~\ref{subsec_bounds_partial_spreads}. In Section~\ref{sec_linkage}
we present the mentioned improvement of the linkage construction. Asymptotic bounds for the ratio between lower and upper bounds for code sizes
are studied in Section~\ref{sec_asymptotic}. We continue with the upper bound for constant dimension codes containing a lifted MRD code in
Section~\ref{sec_better_than_MRD_bound}, including some numerical results, before we draw a short conclusion in Section~\ref{sec_conclusion}.

\section{Preliminaries}
\label{sec_preliminaries}
For the remainder of the paper we set $V\cong \mathbb{F}_q^v$, where $q$ is a prime power. By $v$ we denote the dimension of $V$. Using the
language of projective geometry, we will call the $1$-dimensional subspaces of $\mathbb{F}_q^v$ points and the $2$-dimensional subspaces lines.
First, we observe that the $q$-binomial coefficient $\gaussmnum{v}{k}{q}$ indeed gives the cardinality of {\Gqnk}.
To this end, we associate with a subspace $U\in {\Gqnk}$ a unique $k\times v$ matrix $X_U$ in row reduced echelon form (rref) having the property
that $\langle X_U\rangle=U$ and denote the corresponding bijection
\[
  \gaussmset{\mathbb{F}_q^v}{k} \rightarrow \{X_U \in \mathbb{F}_q^{k \times v} | \rk(X_U)=k, \text{$X_U$ is in rref}\}
\]
by $\tau$. An example is given by $X_U=\left(\begin{smallmatrix}1&0&0\\0&1&1\end{smallmatrix}\right)\in \mathbb{F}_2^{2 \times 3}$, where
$U=\tau^{-1}(X_U)\in \gaussmset{\mathbb{F}_2^3}{2}$ is a line that contains the three points $(1,0,0)$, $(1,1,1)$, and $(0,1,1)$.
Counting those matrices gives
\[
  \# {\Gqnk}=\prod_{i=0}^{k-1}\frac{q^v-q^i}{q^k-q^i}=\prod_{i=1}^{k} \frac{q^{v-k+i}-1}{q^i-1}=\gaussmnum{v}{k}{q}
\]
for all integers $0\le k\le v$. Especially, we have $\gaussmnum{v}{v}{q}=\gaussmnum{v}{0}{q}=1$.
Given a non-degenerate bilinear form, we denote
by $U^\perp$ the orthogonal subspace of a subspace $U$, which then has dimension $v-\dim(U)$. Then, we have $\sdist(U,W)=\sdist(U^\perp,W^\perp)$,
so that $\gaussmnum{v}{k}{q}=\gaussmnum{v}{v-k}{q}$. The recurrence relation for the usual binomial coefficients generalize to
$\gaussmnum{v}{k}{q}=q^k\gaussmnum{v-1}{k}{q}+\gaussmnum{v-1}{k-1}{q}$.
In order to remove the restriction $0\le k\le v$, we set
$\gaussmnum{a}{b}{q}=0$ for $a\in\mathbb{N}_{\ge 0}$ and $b\in\mathbb{Z}$, whenever $b<0$ or $a<b$. This extension goes in line with
the interpretation of the number of $b$-dimensional subspaces of $\mathbb{F}_q^a$ and respects the orthogonality relation. In order
to write $\sum_{j=0}^{v-1} q^j=\gaussmnum{v}{1}{q}$ for positive integers $q$ in later formulas, we apply the definition of $\gaussmnum{v}{k}{q}$
also in cases where $q$ is not a prime power and set $\gaussmnum{v}{k}{1}=\binom{v}{k}$ for $q=1$.

Using the bijection
$\tau$ we can express the subspace distance between two $k$-dimensional subspaces $U,W\in\Gqnk$ via the rank of a matrix:
\begin{equation}
  \label{eq_d_s_rk}
  \sdist(U,W)=2\dim(U+W)-\dim(U)-\dim(W)=2\left(\rk\!\left(\begin{smallmatrix}\tau(U)\\ \tau(W)\end{smallmatrix}\right)-k\right).
\end{equation}

Using {\Gqnk} as vertex set, we obtain the so-called Grassmann graph, where two vertices are adjacent iff the corresponding subspaces intersect
in a space of dimension $k-1$. It is well-known that the Grassmann graph is distance regular. The injection distance $\idist(U,W)$ corresponds
to the graph distance in the Grassmann graph. Considered as an association scheme one speaks of the $q$-Johnson scheme.

If $\mathcal{C}\subseteq\Gqnk$ is a cdc with minimum subspace distance $d$, we speak of a $(v,\#\mathcal{C},d;k)$ constant dimension code.
In the special case of $d=2k$ one speaks of so-called partial spreads, i.e., collections of $k$-dimensional subspaces with pairwise
trivial intersection.

Besides the injection and the subspace distance we will also consider the Hamming distance $\dham(u,w)=\# \{i \mid u_i \ne w_i\}$,
for two vectors $u,w \in \mathbb{F}_2^v$, and the rank distance $\rdist(U,W)=\rk(U-W)$, for two matrices $U,W\in\mathbb{F}_q^{m\times n}$.
The latter is indeed a metric, as observed in \cite{gabidulin1985theory}. A subset $\mathcal{C}\subseteq \mathbb{F}_q^{m\times n}$ is
called a rank metric code. If the minimum rank-distance of $\mathcal{C}$ is given by $d_r$, we will also speak of an
$(m \times n , \#\mathcal{C} , d_r )_q$ rank metric code in order to specify its parameters. A rank metric code
$\mathcal{C}\subseteq \mathbb{F}_q^{m\times n}$ is called linear if $\mathcal{C}$ forms a subspace of $\mathbb{F}_q^{m\times n}$,
which implies that $\# \mathcal{C}$ has to be a power of the field size $q$.

\begin{theorem}(see \cite{gabidulin1985theory})
  \label{thm_MRD_size}
  Let $m,n\ge d$ be positive integers, $q$ a prime power, and $\mathcal{C}\subseteq \mathbb{F}_q^{m\times n}$ be a rank metric
  code with minimum rank distance $d$. Then, $\# \mathcal{C}\le q^{\max\{n,m\}\cdot (\min\{n,m\}-d+1)}$.
\end{theorem}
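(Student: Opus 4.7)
The plan is to prove this as a Singleton-type bound via a puncturing argument that shows a suitable coordinate projection is injective on $\mathcal{C}$.

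First I would reduce to a canonical case. Since $\rdist(U,W)=\rk(U-W)=\rk\bigl((U-W)^{\mathsf{T}}\bigr)$, transposition is an isometry between $\mathbb{F}_q^{m\times n}$ and $\mathbb{F}_q^{n\times m}$, and the bound to be proved is symmetric in $m$ and $n$. Hence I may assume without loss of generality that $m\le n$, so that $\max\{n,m\}=n$ and $\min\{n,m\}=m$, and the task becomes showing $\#\mathcal{C}\le q^{n(m-d+1)}$.

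Next I would introduce the puncturing map $\pi\colon\mathbb{F}_q^{m\times n}\to\mathbb{F}_q^{(m-d+1)\times n}$ that keeps only the first $m-d+1$ rows of a matrix (this is well defined since $d\le m$). The key step is the injectivity of $\pi$ restricted to $\mathcal{C}$: if $A,B\in\mathcal{C}$ satisfy $\pi(A)=\pi(B)$, then $A-B$ is zero in its first $m-d+1$ rows, so at most $d-1$ of its rows are nonzero; consequently $\rk(A-B)\le d-1$. If $A\ne B$ this contradicts the minimum rank distance assumption, so $\pi|_{\mathcal{C}}$ is injective.

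Injectivity immediately gives $\#\mathcal{C}\le\#\pi(\mathcal{C})\le\#\mathbb{F}_q^{(m-d+1)\times n}=q^{n(m-d+1)}$, which is the claimed bound under the normalization $m\le n$, hence in general. I do not anticipate a real obstacle here: the only point that needs a moment of care is the symmetry reduction and the observation that $\pi$ is well defined precisely because the hypothesis $m\ge d$ (together with $n\ge d$) guarantees that $m-d+1$ is a positive integer, so the truncated row count is meaningful.
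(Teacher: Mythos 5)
Your argument is correct and complete: the transposition reduction to $m\le n$ is legitimate since $\rk(X)=\rk(X^{\mathsf{T}})$ makes transposition a rank-distance isometry and the bound is symmetric in $m$ and $n$, and the puncturing map onto the first $m-d+1$ rows is injective on $\mathcal{C}$ exactly as you say, because a nonzero difference supported on at most $d-1$ rows has rank at most $d-1<d$. The paper itself gives no proof of this theorem---it is stated with a citation to Gabidulin---so there is nothing internal to compare against; what you have written is the standard Singleton-type puncturing proof of the rank-metric bound, and it stands on its own.
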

Codes attaining this upper bound are called maximum rank distance (MRD) codes. They exist for all (suitable) choices of parameters, which
remains true if we restrict to linear rank metric codes, see  \cite{gabidulin1985theory}. If $m<d$ or $n<d$, then only
$\# \mathcal{C}=1$ is possible, which can be achieved by a zero matrix and may be summarized to the single upper bound
$\# \mathcal{C}\le \left\lceil q^{\max\{n,m\}\cdot (\min\{n,m\}-d+1)}\right\rceil$.
Using an $m\times m$ identity matrix as a prefix one obtains the so-called lifted MRD codes.

\begin{theorem}\cite[Proposition 4]{silva2008rank}
  For positive integers $k,d,v$ with $k\le v$, $d\le 2\min\{k,v-k\}$, and $d$ even, the size of a lifted MRD code in $\Gqnk$ with
  subspace distance $d$ is given by \[M(q,k,v,d):=q^{\max\{k,v-k\}\cdot(\min\{k,v-k\}-d/2+1)}.\] If $d>2\min\{k,v-k\}$, then we have $M(q,k,v,d):=1$.
\end{theorem}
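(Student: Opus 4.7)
My plan is to verify the claim by explicitly describing the lifting construction, translating subspace distance into rank distance, and then invoking Theorem~\ref{thm_MRD_size}.

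First I would fix an MRD rank metric code $\mathcal{C}\subseteq\mathbb{F}_q^{k\times(v-k)}$ with minimum rank distance $d/2$; since $d$ is even and $d/2\le\min\{k,v-k\}$, Theorem~\ref{thm_MRD_size} applies, so such a code of size $q^{\max\{k,v-k\}\cdot(\min\{k,v-k\}-d/2+1)}$ exists. The lift sends each $A\in\mathcal{C}$ to the $k\times v$ matrix $\bigl[\,I_k\,\big|\,A\,\bigr]$, which is already in reduced row echelon form. Applying $\tau^{-1}$ gives an injective map into $\Gqnk$, so the lifted set has the same cardinality as $\mathcal{C}$.

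Next I would compute the subspace distance of two lifted codewords $U_A,U_B$ using equation~\eqref{eq_d_s_rk}. Performing block row operations yields
\[
\rk\begin{pmatrix}I_k&A\\ I_k&B\end{pmatrix}=\rk\begin{pmatrix}I_k&A\\ 0&B-A\end{pmatrix}=k+\rk(B-A),
\]
so $\sdist(U_A,U_B)=2\rdist(A,B)$. Consequently the minimum subspace distance of the lifted code equals $2$ times the minimum rank distance of $\mathcal{C}$, i.e.\ exactly $d$. Combined with the size from Theorem~\ref{thm_MRD_size}, this gives the claimed value $M(q,k,v,d)$ in the case $d\le 2\min\{k,v-k\}$.

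Finally I would treat the degenerate case $d>2\min\{k,v-k\}$: here $d/2$ exceeds $\min\{k,v-k\}$, so by the remark after Theorem~\ref{thm_MRD_size} any rank metric code in $\mathbb{F}_q^{k\times(v-k)}$ with rank distance $d/2$ consists of a single matrix, hence the lifted code has size $1$, matching the definition. The whole argument is essentially a translation, so no step is genuinely hard; the only thing to get right is the rank computation above, which justifies the factor $2$ relating rank distance and subspace distance and makes the bound of Theorem~\ref{thm_MRD_size} directly applicable. Note that no generality is lost from the $\max/\min$ in the exponent since Theorem~\ref{thm_MRD_size} is symmetric in the two dimensions of the ambient matrix space.
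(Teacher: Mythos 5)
Your proof is correct and is the canonical argument for this result: lift an MRD code $\mathcal{C}\subseteq\mathbb{F}_q^{k\times(v-k)}$ of minimum rank distance $d/2$ via $A\mapsto\tau^{-1}\bigl(I_k\mid A\bigr)$, use the block row reduction to get $\sdist(U_A,U_B)=2\,\rdist(A,B)$, and read off the size from Theorem~\ref{thm_MRD_size}. The paper itself states this theorem as a citation of \cite[Proposition~4]{silva2008rank} without reproducing a proof, so there is nothing to compare against, but your argument (including the degenerate case $d>2\min\{k,v-k\}$ and the injectivity of the lift) is complete and matches the standard proof of the cited result.
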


The Hamming distance can be used to lower bound the subspace distance between two codewords (of the same dimension). To this end
let $p: \{M \in \mathbb{F}_q^{k \times v} | \rk(M)=k, \text{M is in rref}\} \rightarrow \{x \in \mathbb{F}_2^v \mid \sum_{i=1}^v x_i = k\}$
denote  the pivot positions of the matrix in rref. For our example $X_U$ we we have $p(X_U)=(1,1,0)$. Slightly abusing notation we
also write $p(U)$ for subspaces $U\in\Gqnk$ instead of $p(\tau(U))$.

\begin{lemma}\cite[Lemma~2]{MR2589964}
\label{lemma_d_s_d_h}
For two subspaces $U,W \le \mathbb{F}_q^v$, we have $\sdist(U,W) \ge \dham(p(U),p(W))$.
\end{lemma}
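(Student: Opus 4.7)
The plan is to identify the binary vectors $p(U), p(W) \in \mathbb{F}_2^v$ with their supports $P_U, P_W \subseteq \{1,\ldots,v\}$, and to reduce the lemma to a simple set containment among these pivot sets.

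First I would note that $|P_U| = \dim U$, $|P_W| = \dim W$, and $\dham(p(U), p(W)) = |P_U \triangle P_W|$. A one-line inclusion--exclusion then yields $2|P_U \cup P_W| = \dim U + \dim W + \dham(p(U), p(W))$. Combined with $\sdist(U,W) = 2\dim(U+W) - \dim U - \dim W$, this reduces the lemma to the single inequality
\[
  \dim(U + W) \;\ge\; |P_U \cup P_W|.
\]

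The key ingredient is the following intrinsic characterization of the pivot support of a subspace in row-reduced echelon form: for any subspace $X \le \mathbb{F}_q^v$ and any index $i \in \{1,\ldots,v\}$,
\[
  i \in P_X \iff \text{there exists } x \in X \text{ with } x_i \ne 0 \text{ and } x_j = 0 \text{ for all } j < i.
\]
The forward direction is witnessed by the row of $\tau(X)$ whose pivot sits in column $i$; the reverse direction is a standard fact about RREF (such an $x$ forces a dimension drop between $X \cap \langle e_i,\ldots,e_v\rangle$ and $X \cap \langle e_{i+1},\ldots,e_v\rangle$, which is exactly the condition for $i$ to be a pivot), and also yields the identity $|P_X| = \dim X$.

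Given this characterization, the containment $P_U \cup P_W \subseteq P_{U+W}$ is immediate: any witness $x \in U$ for $i \in P_U$ also lies in $U+W$, and symmetrically for $W$. Taking cardinalities yields $\dim(U+W) = |P_{U+W}| \ge |P_U \cup P_W|$, which is exactly what is needed. I do not foresee any substantial obstacle: once the intrinsic characterization of $P_X$ is in hand, the conclusion follows purely from the monotonicity $U, W \subseteq U+W$, and the only bookkeeping is the inclusion--exclusion identity linking $|P_U \cup P_W|$ to $\dham(p(U),p(W))$.
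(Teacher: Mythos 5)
The paper itself offers no proof of this lemma---it is quoted from Etzion and Silberstein \cite{MR2589964}---so your argument can only be judged on its own terms, and on those terms it is correct and complete. The reduction is right: with $P_U,P_W$ the pivot supports, $\dham(p(U),p(W))=|P_U\,\triangle\,P_W|=2|P_U\cup P_W|-\dim U-\dim W$ and $\sdist(U,W)=2\dim(U+W)-\dim U-\dim W$, so everything rests on $\dim(U+W)\ge|P_U\cup P_W|$. Your intrinsic characterization of the pivot set ($i\in P_X$ iff $X$ contains a vector whose first nonzero coordinate sits in position $i$) is a standard and easily verified fact about the reduced row echelon form, it gives $|P_X|=\dim X$, and it makes $P_U\cup P_W\subseteq P_{U+W}$ immediate by monotonicity. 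One remark for context: the argument in the cited source runs through the dual route, namely $P_{U\cap W}\subseteq P_U\cap P_W$, hence $\dim(U\cap W)\le|P_U\cap P_W|$, combined with $\sdist(U,W)=\dim U+\dim W-2\dim(U\cap W)$; the two routes are equivalent via inclusion--exclusion and both hinge on exactly the same characterization of pivots, so yours buys nothing more or less---it is simply a clean, self-contained proof of a fact the paper leaves to a reference.
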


\section{Upper Bounds}
\label{sec_bounds}
In this section we review and compare known upper bounds for the sizes of constant dimension codes. Here we assume that $v$, $d$, and $k$ are
integers with $2\le k\le v-2$, $4\le d\le 2\min\{k,v-k\}$, and $d$ even in all subsequent results. The bound $0\le k\le v$ just
ensures that {\Gqnk} is non-empty. Note that $\sdist(U,W)\le 2\min\{k,v-k\}$ and $\sdist(U,W)$ is even for all $U,W\in{\Gqnk}$. Restricting
to the set case, we trivially have $A_q(v,d;k)=\# {\Gqnk}=\gaussmnum{v}{k}{q}$ for $d\le 2$ or $k\le 1$, so that we assume $k\ge 2$ and $d\ge 4$,
which then implies $k\le v-2$ and $v\ge 4$. We remark that some of the latter bounds are also valid for parameters outside the ranges of non-trivial
parameters considered by us. Since the maximum size of a code with certain parameters is always an integer and some of the latter upper bounds can
produce non-integer values, we may always round them down. To ease the notation we will commonly omit the final rounding step.

The list of known bounds has not changed much since \cite{khaleghi2009subspace}, see also \cite{MR2810308}. Comparisons of those bounds
are scattered among the literature and partially hidden in comments, see e.g.\ \cite{MR3063504}. Additionally some results turn out to be
wrong or need a reinterpretation at the very least.

Counting $k$-dimensional subspaces having a \textit{large} intersection with a fixed $m$-dimensional
subspace gives:
\begin{lemma}
  \label{lemma_gsphere}
  For integers $0\le t\le k\le v$ and $k-t\le m\le v$ we have
  \[
    \# \left\{U\in{\Gqnk} \mid \dim(U\cap W)\ge k-t \right\}=
    \sum_{i=0}^{t} q^{(m+i-k)i} \gaussmnum{m}{k-i}{q} \gaussmnum{v-m}{i}{q},
  \]
  where $W\le V$ and $\dim(W)=m$.
\end{lemma}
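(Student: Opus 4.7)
The plan is to stratify the set by the exact dimension of $U \cap W$. Writing $j = \dim(U \cap W)$, the condition $\dim(U\cap W) \ge k-t$ becomes $j \in \{k-t, k-t+1, \ldots, k\}$, so it is convenient to set $i := k - j$ and let $i$ run from $0$ to $t$. The whole count then reduces to the bi-step count: first choose the intersection $U':= U\cap W$, and then choose a suitable $k$-dimensional extension $U$.

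First, for a fixed $i\in\{0,\dots,t\}$, the subspace $U'$ is an arbitrary $(k-i)$-dimensional subspace of $W$, contributing a factor of $\gaussmset{m}{k-i}_q$. Next, with $U'$ fixed, I pass to the quotient $V/U'$, which has dimension $v-k+i$. The image $W/U'$ has dimension $m-k+i$, and a $k$-dimensional subspace $U$ of $V$ with $U\supseteq U'$ and $U\cap W = U'$ corresponds bijectively to an $i$-dimensional subspace $\overline U$ of $V/U'$ with $\overline U\cap (W/U') = \{0\}$. So the problem reduces to the following auxiliary count: for fixed ambient dimension $n$ and a fixed subspace $S$ of dimension $s$, how many $i$-dimensional subspaces $\overline U$ satisfy $\overline U \cap S = \{0\}$?

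For this auxiliary count I would count ordered linearly independent $i$-tuples of vectors $(v_1,\dots,v_i)$ with $v_j \notin S + \langle v_1,\dots,v_{j-1}\rangle$ for each $j$. The $j$-th vector has $q^n - q^{s+j-1}$ choices, giving $\prod_{j=0}^{i-1}\bigl(q^n - q^{s+j}\bigr)$ ordered bases. Dividing by $|\mathrm{GL}_i(\mathbb{F}_q)| = \prod_{j=0}^{i-1}(q^i - q^j)$ and pulling out the factor $q^s$ from each numerator term (versus the factor $q^j$ in each denominator term) yields exactly $q^{si}\gaussmset{n-s}{i}_q$. Applied to $n = v-k+i$ and $s = m-k+i$, this gives $q^{(m+i-k)i}\gaussmset{v-m}{i}_q$ extensions of each fixed $U'$.

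Combining the two counts and summing over $i$ yields the claimed identity
\[
\sum_{i=0}^{t} \gaussmset{m}{k-i}_q \cdot q^{(m+i-k)i} \gaussmset{v-m}{i}_q.
\]
The only genuine obstacle is the auxiliary count of $i$-dimensional subspaces intersecting a fixed subspace trivially, but this is a classical Gaussian-coefficient identity and follows cleanly from the ordered-basis argument outlined above. The edge cases (e.g.\ $i=0$, where the formula contributes $\gaussmset{m}{k}_q$, and the validity of the range constraints $k-t \le m$, which ensures $\gaussmset{m}{k-i}_q$ is nonzero at least for $i = t$) are automatic from the stated hypotheses.
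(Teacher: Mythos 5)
Your proof is correct and follows essentially the same route as the paper's: both stratify by the exact codimension $i$ of $U\cap W$ in $U$, choose the intersection inside $W$, and then count the extensions by an ordered-tuple argument (the paper does the whole count in one fraction of ordered adapted bases, while you factor through the quotient $V/U'$ and invoke the classical count $q^{si}\gaussmnum{n-s}{i}{q}$ of $i$-spaces meeting a fixed $s$-space trivially — the same computation in different packaging). The vanishing of infeasible terms via the convention $\gaussmnum{a}{b}{q}=0$ for $b>a$ or $b<0$ is handled the same way in both arguments.
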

\begin{proof}
  Let us denote $\dim(U\cap W)$ by $k-i$, where $\max\{0,k-m\}\le i\le \min\{t,v-m\}$. With this, the number of choices for $U$ is given by
  \begin{eqnarray*}
    &&\frac{\left(q^{m}-q^0\right)\cdot \left(q^{m}-q^1\right)\cdots\left(q^{m}-q^{k-i-1}\right)\cdot\left(q^{v}-q^{m+1}\right)\cdots \left(q^{v}-q^{m+i-1}\right)}
    {\left(q^k-q^{0}\right)\cdot \left(q^k-q^{1}\right)\cdots \left(q^k-q^{k-1}\right)}\\
    &=&
    \gaussmnum{m}{k-i}{q}\cdot \frac{\left(q^m\right)^i}{\left(q^{k-i}\right)^i}
    \cdot \gaussmnum{v-m}{i}{q}
    =q^{(m+i-k)i} \gaussmnum{m}{k-i}{q} \gaussmnum{v-m}{i}{q}.
  \end{eqnarray*}
  Finally apply the convention $\gaussmnum{a}{b}{q}=0$ for integers with $b<0$ or $b>a$.
  
\end{proof}

Note that $\dim(U\cap W)\ge k-t$ is equivalent to $\sdist(U,W)\le m-k+2t$.
The fact that the Grassmann graph is distance-regular implies:
\begin{theorem}(\textbf{Sphere-packing bound)}\cite[Theorem~6]{MR2451015}
  \label{thm_sphere_packing}
  \[
    A_q(v,d;k)\le \frac{\gaussmnum{v}{k}{q}}{\sum\limits_{i=0}^{\left\lfloor (d/2-1)/2\right\rfloor} q^{i^2} \gaussmnum{k}{i}{q} \gaussmnum{v-k}{i}{q}}
  \]
\end{theorem}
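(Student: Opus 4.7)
The plan is to run the classical sphere-packing argument, adapted to the Grassmann graph via the (half) subspace distance. Set $t:=\left\lfloor (d/2-1)/2\right\rfloor$ and, for each $k$-dimensional subspace $W$, define the ball
\[
  B_t(W):=\left\{U\in\Gqnk \mid \dim(U\cap W)\ge k-t\right\} = \left\{U\in\Gqnk \mid \sdist(U,W)\le 2t\right\},
\]
where the equality uses that for $U,W\in\Gqnk$ we have $\sdist(U,W)=2(k-\dim(U\cap W))$.

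First I would check that the balls around distinct codewords of $\mathcal{C}$ are pairwise disjoint. By the triangle inequality for $\sdist$, if some $U$ were contained in $B_t(W_1)\cap B_t(W_2)$ for two distinct codewords $W_1,W_2\in\mathcal{C}$, then
\[
  \sdist(W_1,W_2)\le \sdist(W_1,U)+\sdist(U,W_2)\le 4t \le 2\left(\tfrac{d}{2}-1\right)=d-2<d,
\]
contradicting the minimum distance of $\mathcal{C}$. So the $|\mathcal{C}|$ balls $B_t(W)$ with $W\in\mathcal{C}$ are disjoint subsets of $\Gqnk$.

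Next I would compute $\# B_t(W)$ by specializing Lemma~\ref{lemma_gsphere} to the case $m=k$ (since $W$ itself is $k$-dimensional). This yields
\[
  \# B_t(W)=\sum_{i=0}^{t} q^{(k+i-k)i}\gaussmnum{k}{k-i}{q}\gaussmnum{v-k}{i}{q}=\sum_{i=0}^{t} q^{i^2}\gaussmnum{k}{i}{q}\gaussmnum{v-k}{i}{q},
\]
where in the last step I use the orthogonality symmetry $\gaussmnum{k}{k-i}{q}=\gaussmnum{k}{i}{q}$. Note that this value does not depend on $W$, which reflects the distance-regularity of the Grassmann graph mentioned in the preliminaries.

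Finally, combining the two steps gives $\#\mathcal{C}\cdot \# B_t(W)\le \gaussmnum{v}{k}{q}$, which rearranges to the stated inequality after applying it to any code attaining $A_q(v,d;k)$. There is no real obstacle here; the only point worth watching is the factor of two between subspace and injection distance, which is precisely what produces the summation limit $\left\lfloor (d/2-1)/2\right\rfloor$ (half of the packing radius $\lfloor (d/2-1)\rfloor$ with respect to the graph-theoretic injection distance).
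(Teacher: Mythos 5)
Your proof is correct and follows exactly the route the paper intends: the paper states this bound as a cited result and merely remarks that the denominator is the ball size obtained from Lemma~\ref{lemma_gsphere} with $m=k$ and $t=\left\lfloor (d/2-1)/2\right\rfloor$ together with $\gaussmnum{k}{k-i}{q}=\gaussmnum{k}{i}{q}$, which is precisely your computation, combined with the standard disjoint-balls packing argument you spell out. The only quibble is terminological: in your closing parenthetical, $\left\lfloor (d/2-1)/2\right\rfloor$ \emph{is} the packing radius with respect to the injection distance (whose minimum value is $d/2$), rather than half of a packing radius $d/2-1$.
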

We remark, that we can obtain the denominator of the formula of Theorem~\ref{thm_sphere_packing} by setting $m=k$, $2t=d/2-1$ in
Lemma~\ref{lemma_gsphere} and applying $\gaussmnum{k}{k-i}{q}=\gaussmnum{k}{i}{q}$. The right hand side is symmetric with respect to orthogonal
subspaces, i.e., the mapping $k\mapsto v-k$ leaves it invariant.

By defining a puncturing operation one can decrease the dimension of the ambient space and the codewords. Since the minimum distance decreases
by at most two, we can iteratively puncture $d/2-1$ times, so that $A_q(v,d;k)\le \gaussmnum{v-d/2+1}{k-d/2+1}{q}= \gaussmnum{v-d/2+1}{v-k}{q}$
since $A_q(v',2;k')=\gaussmnum{v'}{k'}{q}$. Considering either the code or its orthogonal code gives:
\begin{theorem}(\textbf{Singleton bound)}\cite[Theorem~9]{MR2451015}
  \[
    A_q(v,d;k)\le \gaussmnum{v-d/2+1}{\max\{k,v-k\}}{q}
  \]
\end{theorem}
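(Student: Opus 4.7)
The plan is to follow the hint: iteratively puncture the code $d/2-1$ times via hyperplane intersections, each step reducing the ambient and codeword dimensions by one and the minimum distance by at most two, then invoke duality to obtain the symmetric bound. For a single puncturing step by a hyperplane $H\le V$, the assignment $U\mapsto U\cap H$ is injective on $\mathcal{C}$ as soon as $d\ge 4$: if $U\cap H=W\cap H$ with $U\ne W$, then this common subspace lies in $U\cap W$ and has dimension at least $k-1$, giving $\sdist(U,W)\le 2$, a contradiction. Since $\dim((U\cap H)\cap(W\cap H))\le\dim(U\cap W)$, one also obtains $\sdist(U\cap H,W\cap H)\ge\sdist(U,W)-2$.

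Iterating with hyperplanes $H_1,\dots,H_{d/2-1}$ is equivalent to intersecting once with $W:=\bigcap_i H_i$ of dimension $v-d/2+1$, which is the form I would make rigorous. The cumulative consequence is a pigeonhole-style fact: for any $(k-d/2+1)$-dimensional subspace $S\le W$, at most one codeword of $\mathcal{C}$ contains $S$, because otherwise $\dim(U_1\cap U_2)\ge k-d/2+1>k-d/2$, contradicting the minimum distance. On the other hand, the dimension formula gives $\dim(U\cap W)\ge k+(v-d/2+1)-v=k-d/2+1$ for every $U\in\mathcal{C}$, so each codeword contains at least one such $S\subseteq U\cap W$. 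Assigning any such $S$ to each $U$ yields an injection $\mathcal{C}\hookrightarrow\gaussmset{W}{k-d/2+1}$, hence $N\le\gaussmnum{v-d/2+1}{k-d/2+1}{q}$. This is exactly the base case $A_q(v-d/2+1,2;k-d/2+1)=\gaussmnum{v-d/2+1}{k-d/2+1}{q}$ at the end of the iteration, since any two distinct $k'$-subspaces are at subspace distance at least $2$.

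Applying the same argument to the orthogonal code $\mathcal{C}^\perp\subseteq\gaussmset{V}{v-k}$, which has the same size and minimum distance by $\sdist(U,W)=\sdist(U^\perp,W^\perp)$, yields $N\le\gaussmnum{v-d/2+1}{(v-k)-d/2+1}{q}=\gaussmnum{v-d/2+1}{k}{q}$ via $\gaussmnum{a}{b}{q}=\gaussmnum{a}{a-b}{q}$. The stated bound $\gaussmnum{v-d/2+1}{\max\{k,v-k\}}{q}$ is then the minimum of these two by unimodality of the $q$-binomial coefficient in its lower index; for $d\ge 4$ the value $\max\{k,v-k\}$ is the one farther from $(v-d/2+1)/2$, so it yields the smaller binomial. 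The main obstacle I foresee is precisely this repeated puncturing: a hyperplane avoiding every codeword need not exist when $N$ is large, so the naive iterative argument can stall. Recasting the iteration as the one-shot intersection with $W$, as sketched above, avoids this existential snag and replaces it with a clean double count, which is where I would concentrate the rigorous write-up.
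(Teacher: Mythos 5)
Your argument is correct. The paper's own proof is just the two-line sketch preceding the theorem: it invokes the puncturing operation of K\"otter and Kschischang \cite{MR2451015} as a black box, iterates it $d/2-1$ times (losing at most $2$ in minimum distance per step), lands at $A_q(v',2;k')=\gaussmnum{v'}{k'}{q}$, and then passes to the orthogonal code. You start from the same plan but rigorize it differently: instead of iterated puncturing you intersect once with a fixed subspace $W$ of dimension $v-d/2+1$ and build a direct injection $\mathcal{C}\hookrightarrow\gaussmset{W}{k-d/2+1}$, using that $\dim(U\cap W)\ge k-d/2+1$ for every codeword while no $(k-d/2+1)$-dimensional space can lie in two codewords at subspace distance $\ge d$. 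This one-shot version is self-contained and sidesteps the issue you correctly sense with naive hyperplane sectioning (a codeword contained in the hyperplane keeps dimension $k$, so the sectioned code need not be constant-dimension without the more careful puncturing operation of \cite{MR2451015}); your diagnosis of the obstacle is slightly off, but the fix is valid regardless. The duality step and the observation that $\max\{k,v-k\}$ is the lower index farther from $(v-d/2+1)/2$, hence gives the smaller of the two $q$-binomials, match the paper. What your route buys is a complete elementary proof --- essentially the same ``no two codewords share a $(k-d/2+1)$-subspace'' counting that underlies the Anticode bound, restricted to subspaces of $W$ --- while the paper's route buys brevity by citing machinery already developed elsewhere.
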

Referring to \cite{MR2451015} the authors of \cite{khaleghi2009subspace} state that even a relaxation of the Singleton bound is always stronger
than the sphere packing bound for non-trivial codes. However, for $q=2$, $v=8$, $d=6$, and $k=4$, the sphere-packing bound gives an upper bound
of $200787/451\approx 445.20399$ while the Singleton bound gives an upper bound of $\gaussmnum{6}{4}{2}=651$. For $q=2$, $v=8$, $d=4$, and $k=4$
it is just the other way round, i.e., the Singleton bound gives $\gaussmnum{7}{3}{2}=11811$ and the sphere-packing bound gives
$\gaussmnum{8}{4}{2}=200787$. Examples for the latter case are easy to find. For $d=2$ both bounds coincide and for $d=4$ the Singleton bound
is always stronger than the sphere-packing bound since $\gaussmnum{v-1}{k}{q}<\gaussmnum{v}{k}{q}$. The asymptotic bounds
\cite[Corollaries 7 and 10]{MR2451015}, using normalized parameters, and \cite[Figure 1]{MR2451015} suggest that there is only a small range of
parameters where the sphere-packing bound can be superior to the Singleton bound.\footnote{By a tedious computation one can check that the
sphere-packing bound is strictly tighter than the Singleton bound iff $q=2$, $v=2k$ and $d=6$.}

Given an arbitrary metric space $X$, an anticode of diameter $e$ is a subset whose elements have pairwise distance at most $e$. Since the
$q$-Johnson scheme is an association scheme the Anticode bound of Delsarte \cite{delsarte1973algebraic} can be applied. As a standalone argument
we go along the lines of \cite{ahlswede2009error} and consider bounds for codes on transitive graphs. By double-counting the number of pairs
$(a,g)\in A\cdot\operatorname{Aut}(\Gamma)$, where $g(a)\in B$, we obtain:
\begin{lemma}
  \cite[Lemma~1]{ahlswede2009error}, cf.~\cite[Theorem~1']{ahlswede2001perfect} Let $\Gamma=(V,E)$ be a graph that admits a transitive group of automorphisms $\operatorname{Aut}(\Gamma)$ and
  let $A,B$ be arbitrary subsets of the vertex set $V$. Then, there exists a group element $g\in\operatorname{Aut}(\Gamma)$ such that
  \[
    \frac{|g(A)\cap B|}{|B|}\ge \frac{|A|}{|V|}.
  \]
\end{lemma}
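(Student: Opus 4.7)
The plan is to use the double-counting argument hinted at in the sentence preceding the lemma. I would introduce the bipartite-style set
\[
  S = \{(a,g) \in A \times \operatorname{Aut}(\Gamma) \mid g(a) \in B\}
\]
and count its cardinality in two ways, then finish by an averaging argument over $\operatorname{Aut}(\Gamma)$.

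First I would count $|S|$ by fixing $a \in A$. Since $\operatorname{Aut}(\Gamma)$ acts transitively on $V$, the orbit of $a$ is all of $V$, and the orbit-stabilizer theorem gives that for every vertex $v \in V$ the fiber $\{g \in \operatorname{Aut}(\Gamma) \mid g(a)=v\}$ is a coset of the stabilizer of $a$ and therefore has cardinality $|\operatorname{Aut}(\Gamma)|/|V|$. Summing over $v \in B$ shows the number of $g$ with $g(a) \in B$ equals $|B|\cdot |\operatorname{Aut}(\Gamma)|/|V|$, independently of the chosen $a$. Hence
\[
  |S| = |A|\cdot |B|\cdot \frac{|\operatorname{Aut}(\Gamma)|}{|V|}.
\]

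Next I would count $|S|$ by fixing $g \in \operatorname{Aut}(\Gamma)$: the elements $a \in A$ with $g(a) \in B$ are in bijection with $g(A) \cap B$, so
\[
  |S| = \sum_{g \in \operatorname{Aut}(\Gamma)} |g(A) \cap B|.
\]
Combining both expressions, the average of $|g(A)\cap B|$ over $g \in \operatorname{Aut}(\Gamma)$ equals $|A|\cdot |B|/|V|$, so there must exist at least one $g$ attaining at least the average, i.e.\ $|g(A)\cap B| \ge |A|\cdot |B|/|V|$, and dividing by $|B|$ yields the claim.

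The only real point to verify carefully is the orbit-stabilizer step, which requires the finiteness of $\operatorname{Aut}(\Gamma)$ (automatic here, since $V$ is finite) and the transitivity hypothesis to ensure every fiber has the same size $|\operatorname{Aut}(\Gamma)|/|V|$. Once this uniformity is in place the rest is a one-line pigeonhole, so I do not anticipate a substantive obstacle.
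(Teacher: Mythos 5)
Your proof is correct and follows exactly the double-counting of pairs $(a,g)$ with $g(a)\in B$ that the paper indicates in the sentence preceding the lemma (the paper itself gives no further detail, citing \cite{ahlswede2009error}); your orbit--stabilizer justification of the uniform fiber size $|\operatorname{Aut}(\Gamma)|/|V|$ and the concluding averaging step are the right way to fill in that sketch.
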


\begin{corollary}
  \label{cor_ahlswede}
  \cite[Corollary~1]{ahlswede2009error}, cf.~\cite[Theorem~1]{ahlswede2001perfect} Let $\mathcal{C}_D\subseteq {\Gqnk}$ be a code with (injection or graph)
  distances from $D=\{d_1,\dots,d_s\}\subseteq \{1,\dots,v\}$. Then, for an arbitrary subset $\mathcal{B}\subseteq{\Gqnk}$ there exists a code
  $\mathcal{C}_D^*(\mathcal{B})\subseteq\mathcal{B}$ with distances from $D$ such that
  \[
    \frac{\left|\mathcal{C}_D^*(\mathcal{B})\right|}{\left|\mathcal{B}\right|}\ge\frac{\left|\mathcal{C}_D\right|}{\gaussmnum{v}{k}{q}}.
  \]
\end{corollary}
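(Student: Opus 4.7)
The plan is to apply the preceding lemma directly to the Grassmann graph $\Gamma$ with vertex set $\Gqnk$, where two $k$-subspaces are adjacent iff they intersect in dimension $k-1$. The first step is to confirm that $\Gamma$ admits a transitive automorphism group: the general linear group $\operatorname{GL}(V)$ acts on $\Gqnk$ by $U \mapsto g(U)$, this action preserves the dimension of intersections and hence the adjacency relation of $\Gamma$, and it is transitive on $k$-subspaces. Moreover, as noted just before Lemma~\ref{lemma_d_s_d_h}, the injection distance $\idist$ coincides with the graph distance in $\Gamma$, so every element of $\operatorname{Aut}(\Gamma)$ preserves $\idist$ (and thus $\sdist = 2\idist$ on $\Gqnk$).

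Next, I would apply the lemma with $A := \mathcal{C}_D$ and $B := \mathcal{B}$. This yields an element $g \in \operatorname{Aut}(\Gamma)$ satisfying
\[
  \frac{\left|g(\mathcal{C}_D)\cap\mathcal{B}\right|}{\left|\mathcal{B}\right|} \;\ge\; \frac{\left|\mathcal{C}_D\right|}{\left|\Gqnk\right|} \;=\; \frac{\left|\mathcal{C}_D\right|}{\gaussmnum{v}{k}{q}}.
\]
Define $\mathcal{C}_D^*(\mathcal{B}) := g(\mathcal{C}_D) \cap \mathcal{B}$. This is by construction a subset of $\mathcal{B}$, and the displayed inequality is exactly the conclusion of the corollary.

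It remains to verify that $\mathcal{C}_D^*(\mathcal{B})$ is a code whose pairwise distances all lie in $D$. Since $g$ preserves the (injection/graph) distance, for any two distinct $U,W\in g(\mathcal{C}_D)$ we have $\idist(U,W) = \idist(g^{-1}(U),g^{-1}(W)) \in D$; taking a subset of $g(\mathcal{C}_D)$ can only restrict the set of realized distances, so $\mathcal{C}_D^*(\mathcal{B}) \subseteq g(\mathcal{C}_D)$ inherits this property. There is no real obstacle here: the only conceptual point is to identify $\operatorname{Aut}(\Gamma)$ as a transitive group that preserves the relevant distance, after which the corollary is an immediate specialization of the lemma.
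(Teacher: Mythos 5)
Your proof is correct and is exactly the intended derivation: the paper states the corollary as an immediate consequence of the preceding lemma (citing Ahlswede--Aydinian) without writing out the specialization, and your argument --- take $\Gamma$ the Grassmann graph, note transitivity via $\operatorname{GL}(V)$, set $A=\mathcal{C}_D$, $B=\mathcal{B}$, and define $\mathcal{C}_D^*(\mathcal{B})=g(\mathcal{C}_D)\cap\mathcal{B}$, which inherits distances from $D$ because graph automorphisms preserve the graph distance $\idist$ --- is the standard and correct way to fill in that step.
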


If $\mathcal{C}_D\subseteq {\Gqnk}$ is a constant dimension code with minimum injection distance $d$, i.e., $D=\{d,\dots,v\}$, and $\mathcal{B}$ is an
anticode with diameter $d-1$, we have $\# \mathcal{C}_D^*(\mathcal{B})=1$, so that we obtain Delsarte's Anticode bound
\begin{equation}
  \#\mathcal{C}_D\le \frac{\gaussmnum{v}{k}{q}}{\# \mathcal{B}}.
\end{equation}

The set of all elements of {\Gqnk} which contain a fixed $(k-d/2+1)$-dimensional subspace is an anticode of diameter $d-2$ with
$\gaussmnum{v-k+d/2-1}{d/2-1}{q}$ elements. By orthogonality, the set of all elements of {\Gqnk} which are contained in a fixed
$(k+d/2-1)$-dimensional subspace is also an anticode of diameter $d-2$ with $\gaussmnum{k+d/2-1}{k}{q}=\gaussmnum{k+d/2-1}{d/2-1}{q}$ elements. Frankl and Wilson proved
in \cite[Theorem~1]{MR867648} that these anticodes have the largest possible size, which implies:
\begin{theorem}
  \label{thm_anticode}
  (\textbf{Anticode bound})
  \[
    A_q(v,d;k)\le \frac{\gaussmnum{v}{k}{q}}{\gaussmnum{\max\{k,v-k\}+d/2-1}{d/2-1}{q}}
  \]
\end{theorem}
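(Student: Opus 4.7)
The plan is to deduce the claim from Corollary~\ref{cor_ahlswede} by feeding it a maximum-size anticode of subspace diameter $d-2$ in $\Gqnk$. Concretely, let $\mathcal{C}\subseteq\Gqnk$ realize $A_q(v,d;k)$, set $D=\{d,d+1,\dots,v\}$ (reading the corollary with respect to the subspace distance, where codewords are separated by at least $d$), and let $\mathcal{B}$ be any anticode with $\sdist(U,W)\le d-2$ for all $U,W\in\mathcal{B}$. Any two distinct elements of the subcode $\mathcal{C}_D^*(\mathcal{B})\subseteq\mathcal{B}$ produced by Corollary~\ref{cor_ahlswede} would then satisfy $\sdist(U,W)\le d-2$, contradicting the minimum distance $d$ inherited from $\mathcal{C}$. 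Hence $\#\mathcal{C}_D^*(\mathcal{B})\le 1$, and the inequality in the corollary rearranges to $\#\mathcal{C}\le\gaussmnum{v}{k}{q}/\#\mathcal{B}$.

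Next, I would exhibit two explicit anticodes of diameter $d-2$, exactly as hinted in the paragraph preceding the theorem. Fix an $(k-d/2+1)$-dimensional subspace $S\le V$ and set $\mathcal{B}_1:=\{U\in\Gqnk\mid S\subseteq U\}$; then $\dim(U\cap U')\ge k-d/2+1$ for all $U,U'\in\mathcal{B}_1$, so $\sdist(U,U')=2k-2\dim(U\cap U')\le d-2$, and the size equals the number of $(d/2-1)$-dimensional subspaces of $V/S$, namely $\gaussmnum{v-k+d/2-1}{d/2-1}{q}$. Dually, the collection $\mathcal{B}_2$ of $k$-dimensional subspaces contained in a fixed $(k+d/2-1)$-dimensional subspace is an anticode of diameter $d-2$ (apply the orthogonal-complement map and invoke the previous bullet on $v-k$) and has size $\gaussmnum{k+d/2-1}{k}{q}=\gaussmnum{k+d/2-1}{d/2-1}{q}$.

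Finally, I would cite \cite[Theorem~1]{MR867648} of Frankl and Wilson, which asserts that $\mathcal{B}_1$ and $\mathcal{B}_2$ attain the maximum anticode size of diameter $d-2$ in $\Gqnk$. Choosing $\mathcal{B}\in\{\mathcal{B}_1,\mathcal{B}_2\}$ to be whichever is larger and using monotonicity of $q$-binomials in the upper index, the larger of the two sizes is $\gaussmnum{\max\{k,v-k\}+d/2-1}{d/2-1}{q}$, yielding the stated bound. The only non-routine input is the Frankl--Wilson maximum-anticode result; everything else is a bookkeeping combination of Corollary~\ref{cor_ahlswede} with the explicit dimension counts for $\mathcal{B}_1$ and $\mathcal{B}_2$, so that is where I expect the substantive work to sit, while the remaining steps are essentially forced by the setup already assembled in the paper.
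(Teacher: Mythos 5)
Your proposal is correct and follows essentially the same route as the paper: Corollary~\ref{cor_ahlswede} specialized to an anticode of diameter $d-2$ yields Delsarte's bound $\#\mathcal{C}\le\gaussmnum{v}{k}{q}/\#\mathcal{B}$, and the two explicit anticodes (subspaces containing a fixed $(k-d/2+1)$-space, respectively contained in a fixed $(k+d/2-1)$-space) give exactly the denominator $\gaussmnum{\max\{k,v-k\}+d/2-1}{d/2-1}{q}$. The only cosmetic remark is that the Frankl--Wilson maximality result is not actually needed to establish the stated inequality (mere existence of the anticodes suffices); it only certifies that no better anticode can be fed into the bound, which is how the paper presents it as well.
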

Using different arguments, Theorem~\ref{thm_anticode} was proved in \cite[Theorem~5.2]{wang2003linear} by  Wang, Xing, Safavi-Naini
in 2003. Codes that can achieve the (unrounded) value \linebreak $\gaussmnum{v}{k}{q}/\gaussmnum{\max\{k,v-k\}+d/2-1}{d/2-1}{q}$ are called
Steiner structures. It is a well-known and seemingly very hard problem to decide whether a Steiner structure for $v=7$, $d=4$, and $k=3$ exists.
For $q=2$ the best known bounds are $333\le A_2(7,4;3)\le 381$. Additionally it is known that a code attaining the upper bound can have automorphisms of
at most order $2$, see \cite{fano_aut}. So far, the only known Steiner structure corresponds to $A_2(13,4;3)=1597245$ \cite{Braun16}.
The reduction to Delsarte's Anticode bound can be found e.g.\ in \cite[Theorem~1]{MR2810308}.

Since the sphere underlying the proof of Theorem~\ref{thm_sphere_packing} is also an anticode, Theorem~\ref{thm_sphere_packing} is implied by
Theorem~\ref{thm_anticode}. For $d=2$ both bounds coincide. In \cite[Section 4]{xia2009johnson} Xia and Fu verified that the Anticode bound is
always stronger than the Singleton bound for the ranges of parameters considered by us.

Mimicking a classical bound of Johnson on binary error-correcting codes with respect to the Hamming distance, see \cite[Theorem~3]{johnson1962new}
and also \cite{tonchev1998codes}, Xia and Fu proved:
\begin{theorem}
  \label{thm_johnson_I}
  (\textbf{Johnson type bound I}) \cite[Theorem~2]{xia2009johnson}\\ If $\left(q^k-1\right)^2>\left(q^v-1\right)\left(q^{k-d/2}-1\right)$, then
  \[
    A_q(v,d;k)\le \frac{\left(q^k-q^{k-d/2}\right)\left(q^v-1\right)}{\left(q^k-1\right)^2-\left(q^v-1\right)\left(q^{k-d/2}-1\right)}.
  \]
\end{theorem}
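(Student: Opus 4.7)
The approach is a double-counting argument on point--codeword incidences, mirroring the classical Johnson bound for binary constant-weight codes. Let $\mathcal{C}$ be a $(v,N,d;k)_q$ constant dimension code, and for each point $P\in\gaussmset{V}{1}$ set $\lambda_P:=\#\{U\in\mathcal{C}:P\le U\}$. Since every codeword contains exactly $\gaussmnum{k}{1}{q}$ points, counting the incident pairs $(P,U)$ in two ways yields the first moment $\sum_P \lambda_P = N\gaussmnum{k}{1}{q}$; dividing by the total number of points $\gaussmnum{v}{1}{q}$ then produces the average $\bar\lambda = N(q^k-1)/(q^v-1)$.

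Next I would bound, in two directions, the number of triples $(P,\{U_1,U_2\})$ with $U_1\ne U_2$ in $\mathcal{C}$ and $P\le U_1\cap U_2$. On the codeword side, the minimum distance condition combined with $\sdist(U_1,U_2)=2k-2\dim(U_1\cap U_2)$ forces $\dim(U_1\cap U_2)\le k-d/2$, so each intersection $U_1\cap U_2$ contains at most $\gaussmnum{k-d/2}{1}{q}$ points, whence
\[
  \sum_{P}\binom{\lambda_P}{2}\;\le\;\binom{N}{2}\gaussmnum{k-d/2}{1}{q}.
\]
On the point side, convexity of $x\mapsto x(x-1)/2$ together with Jensen's inequality applied to the uniform distribution on the $\gaussmnum{v}{1}{q}$ points gives the complementary lower bound $\sum_P\binom{\lambda_P}{2}\ge\gaussmnum{v}{1}{q}\binom{\bar\lambda}{2}$.

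Chaining these two bounds, substituting $\bar\lambda=N(q^k-1)/(q^v-1)$, clearing the common factor $1/(q-1)$, dividing out one factor of $N$ (the case $N=0$ being trivial), and collecting terms reduces everything to the single linear inequality
\[
  N\bigl[(q^k-1)^2-(q^v-1)(q^{k-d/2}-1)\bigr]\;\le\;(q^v-1)\bigl(q^k-q^{k-d/2}\bigr).
\]
The hypothesis $(q^k-1)^2>(q^v-1)(q^{k-d/2}-1)$ is precisely the condition that the bracketed coefficient is strictly positive, so division preserves the inequality direction and delivers the stated bound. There is no substantive obstacle beyond careful bookkeeping; the only point worth flagging is that without this positivity hypothesis the underlying quadratic in $N$ admits arbitrarily large solutions, which is exactly why the theorem is stated conditionally.
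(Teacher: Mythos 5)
Your double-counting argument is correct: the chain $\gaussmnum{v}{1}{q}\binom{\bar\lambda}{2}\le\sum_P\binom{\lambda_P}{2}\le\binom{N}{2}\gaussmnum{k-d/2}{1}{q}$ together with $\bar\lambda=N(q^k-1)/(q^v-1)$ does reduce, after clearing $(q-1)$ and one factor of $N$, to exactly the stated linear inequality, and the positivity hypothesis is used precisely where you say it is. The paper itself gives no proof of this theorem -- it is imported verbatim from Xia and Fu, whose derivation mimics Johnson's classical constant-weight argument in just this way -- so your proof is essentially the standard one for this result and there is nothing to object to.
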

However, the required condition of Theorem~\ref{thm_johnson_I} is rather restrictive and can be simplified considerably.

\begin{proposition}
  \label{prop_johnson_I}
  For $0\le k<v$, the bound in Theorem~\ref{thm_johnson_I} is applicable iff $d=2\min\{k,v-k\}$ and $k\ge 1$. Then, it is equivalent to
  \[
    A_q(v,d;k)\le \frac{q^v-1}{q^{\min\{k,v-k\}}-1}.
  \]
\end{proposition}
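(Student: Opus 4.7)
The plan is to characterize the applicability condition $(q^k-1)^2 > (q^v-1)(q^{k-d/2}-1)$ directly, splitting on the sign of $k - (v-k)$, since neither the condition nor the bound formula in Theorem~\ref{thm_johnson_I} is symmetric under the orthogonality $k \mapsto v-k$. Setting $m := \min\{k, v-k\}$, I would, in each sub-case, verify that the condition holds exactly when $d = 2m$ (and $k \ge 1$) and that the bound then simplifies to $(q^v-1)/(q^m-1)$.

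For the sub-case $k \le v-k$ (so $m = k$), I write $e := k - d/2 \ge 0$. When $e = 0$, the right-hand side of the condition vanishes, making applicability equivalent to $k \ge 1$, while the quotient in Theorem~\ref{thm_johnson_I} telescopes immediately to $(q^v-1)/(q^k-1)$. When $e \ge 1$, the inequalities $v \ge 2k$ and $e \ge 1$ give $(q^v-1)(q^e-1) \ge (q^k-1)(q^k+1)(q-1)$, and a short check shows $(q^k+1)(q-1) > q^k-1$ for $q \ge 2$, so the condition fails.

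The more delicate sub-case is $k > v-k$ (so $m = v-k$). When $d = 2m$, the exponent $e = 2k - v$ satisfies $1 \le e \le k-1$, and the proof rests on the identity
\[
  (q^k-1)^2 - (q^v-1)(q^{2k-v}-1) = q^{2k-v}(q^{v-k}-1)^2,
\]
which simultaneously certifies applicability (the right-hand side is positive since $v > k$) and makes the bound collapse: the numerator rewrites as $(q^k - q^{2k-v})(q^v-1) = q^{2k-v}(q^{v-k}-1)(q^v-1)$, sharing the factor $q^{2k-v}(q^{v-k}-1)$ with the denominator and leaving $(q^v-1)/(q^{v-k}-1)$. When $d < 2m$, so $e \ge 2k - v + 1$, I would use $k+1 \le v \le 2k-1$ to bound $q^v \le q^{2k-1}$ and $q^{2k-v+1} \le q^k$, and then show that
\[
  (q^v-1)(q^{2k-v+1}-1) - (q^k-1)^2 = q^{2k}(q-1) + 2q^k - q^v - q^{2k-v+1}
\]
is strictly positive, dominated by the $q^{2k}(q-1) - q^{2k-1}$ contribution.

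I expect the main obstacle to be spotting the factorization $q^{2k-v}(q^{v-k}-1)^2$ in the second sub-case: without it the algebra in Theorem~\ref{thm_johnson_I} is opaque, but with it both the applicability of the bound and its clean simplification to $(q^v-1)/(q^m-1)$ fall out in a single stroke. The remaining inequalities are then routine polynomial estimates valid for all prime powers $q \ge 2$.
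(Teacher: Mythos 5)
Your proposal is correct and follows the same overall strategy as the paper's proof: a case split on $k$ versus $v-k$ and on whether $d=2\min\{k,v-k\}$, with polynomial estimates showing that the hypothesis of Theorem~\ref{thm_johnson_I} fails whenever $d$ is smaller. Two places where you go beyond the printed argument are worth recording. First, the paper settles the case $d=2\min\{k,v-k\}$ with ``it can be easily checked''; your identity $(q^k-1)^2-(q^v-1)(q^{2k-v}-1)=q^{2k-v}\left(q^{v-k}-1\right)^2$ is exactly the missing computation, certifying applicability and collapsing the quotient to $(q^v-1)/(q^{v-k}-1)$ in one step. Second, in the sub-case $k\ge v-k+1$ and $d\le 2(v-k)-2$, the paper's displayed chain terminates with $\left(q^{(v+1)/2}-1\right)^2\ge\left(q^k-1\right)^2$, which is reversed precisely because $k\ge (v+1)/2$ there; e.g.\ for $q=2$, $v=10$, $k=7$, $d=4$ one has $(q^v-1)(q^2-1)=3069<16129=(q^k-1)^2$, so the paper's intermediate bound is too weak, even though its conclusion is correct. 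Your estimate via $q^{k-d/2}-1\ge q^{2k-v+1}-1$ together with the positivity of $q^{2k}(q-1)+2q^k-q^v-q^{2k-v+1}$ repairs this. So yours is not a different route, but it is the complete and correct version of the paper's argument.
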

\begin{proof}
  If $k=0$ we have $\left(q^k-1\right)^2=0$, so that we assume $k\ge 1$ in the following. If $k\le v-k$ and $d\le 2k-2$, then
  \[
    \left(q^v\!-\!1\right)\left(q^{k\!-\!d/2}\!-\!1\right)\ge \left(q^{2k}\!-\!1\right)\left(q\!-\!1\right)\ge q^{2k}\!-\!1
    \overset{q\ge 2,k\ge 1}{>} q^{2k}\!-\!2q^k\!+\!1=\left(q^k\!-\!1\right)^2.
  \]
  If $k\ge v-k+1$ and $d\le 2v-2k-2$, then
  \[
    \left(q^v\!-\!1\right)\left(q^{k\!-\!d/2}\!-\!1\right)\ge \left(q^v\!-\!1\right)\left(q^2\!-\!1\right)\overset{q\ge 2,v\ge 1}{>}
    \left(q^{(v+1)/2}-1\right)^2\ge \left(q^k\!-\!1\right)^2.
  \]
  If $d=2\min\{k,v-k\}$, $q\ge 2$, and $k\ge 1$, then it can be easily checked that the condition of Theorem~\ref{thm_johnson_I} is satisfied and
  we obtain the proposed formula after simplification.
  
\end{proof}
For $k=v$ Theorem~\ref{thm_johnson_I} gives $A_q(v,d;v)\le 1$ which is trivially satisfied with equality.
In Subsection~\ref{subsec_bounds_partial_spreads} we will provide tighter upper bounds for the special case where $d=2k$, i.e., partial spreads.
Indeed, the bound stated in Proposition~\ref{prop_johnson_I} corresponds
to the most trivial upper bounds for partial spreads that is tight iff $k$ divides $v$, as we will see later on. So, due to orthogonality,
Theorem~\ref{thm_johnson_I} is dominated by the partial spread bounds discussed later on.

While the previously mentioned generalization of a classical bound of Johnson on binary error-correcting codes yields the rather weak
Theorem~\ref{thm_johnson_I}, generalizing \cite[Inequality~(5)]{johnson1962new}, see \cite{xia2009johnson} yields a very strong upper bound:
\begin{theorem}
  \label{thm_johnson_II}
  (\textbf{Johnson type bound II}) \cite[Theorem~3]{xia2009johnson}, \cite[Theorem~4,5]{MR2810308}
  \begin{eqnarray}
  A_q(v,d;k) &\le&
  \frac{q^v-1}{q^k-1} A_q(v-1,d;k-1)
  \label{ie_j_2}\\
  A_q(v,d;k) &\le&
  \frac{q^v-1}{q^{v-k}-1} A_q(v-1,d;k)
  \label{ie_j_o}
  \end{eqnarray}
\end{theorem}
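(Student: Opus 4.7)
The plan is a standard double counting argument run twice, once on points (1-dimensional subspaces) and once on hyperplanes (dimension $v-1$ subspaces), which gives the two inequalities respectively.

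For the first inequality, I would fix an optimal $(v,N,d;k)_q$ constant dimension code $\mathcal{C}$ with $N=A_q(v,d;k)$ and, for each point $P\in\gaussmset{V}{1}$, consider the derived subcode $\mathcal{C}_P=\{U\in\mathcal{C}\mid P\subseteq U\}$. The map $U\mapsto U/P$ sends $\mathcal{C}_P$ into $\gaussmset{V/P}{k-1}$, where $V/P\cong\mathbb{F}_q^{v-1}$. The key observation to verify is that for $U,W\in\mathcal{C}_P$ one has $\dim((U/P)\cap(W/P))=\dim(U\cap W)-1$ and $\dim(U/P+W/P)=\dim(U+W)-1$, so $\sdist(U/P,W/P)=\sdist(U,W)\ge d$; consequently $\#\mathcal{C}_P\le A_q(v-1,d;k-1)$. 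Now double count the incidences $\{(P,U)\mid P\subseteq U,\ U\in\mathcal{C}\}$: each codeword contributes $\gaussmnum{k}{1}{q}=(q^k-1)/(q-1)$ points, while summing over points gives at most $\gaussmnum{v}{1}{q}\cdot A_q(v-1,d;k-1)$. Rearranging yields \eqref{ie_j_2}.

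For the second inequality I would run the dual counting over hyperplanes $H$. For each hyperplane $H\le V$, the set $\mathcal{C}^H=\{U\in\mathcal{C}\mid U\subseteq H\}$ is directly a constant dimension code in $H\cong\mathbb{F}_q^{v-1}$ of the same dimension $k$ and minimum distance at least $d$, so $\#\mathcal{C}^H\le A_q(v-1,d;k)$. Each $k$-dimensional codeword lies in exactly $\gaussmnum{v-k}{1}{q}=(q^{v-k}-1)/(q-1)$ hyperplanes (use the orthogonality bijection $H\leftrightarrow H^\perp$ between hyperplanes containing $U$ and points of $U^\perp$), and there are $\gaussmnum{v}{1}{q}=(q^v-1)/(q-1)$ hyperplanes in total. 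Double counting $\{(U,H)\mid U\in\mathcal{C},\ U\subseteq H\}$ gives \eqref{ie_j_o}.

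The only nontrivial step is checking that the subspace distance is preserved under the projection $U\mapsto U/P$ for the first bound, since in the second bound restricting to a hyperplane is manifestly isometric. I expect no substantive obstacles; the argument is essentially the $q$-analog of Johnson's residual/shortened code trick for binary codes.
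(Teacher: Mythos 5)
Your proof is correct. Note, however, that the paper does not actually prove Theorem~\ref{thm_johnson_II} at all: it is imported by citation from Xia--Fu and Etzion--Vardy, and the only argument the paper supplies in its vicinity is the short Proposition showing that \eqref{ie_j_2} and \eqref{ie_j_o} are equivalent to one another under orthogonality. Your write-up therefore supplies the missing content: the point-derived argument (quotient by $P$, check that $(U/P)\cap(W/P)=(U\cap W)/P$ and $(U/P)+(W/P)=(U+W)/P$ so that $\sdist$ is preserved, then double count incidences $P\subseteq U$ using $\gaussmnum{k}{1}{q}$ points per codeword and $\gaussmnum{v}{1}{q}$ points in total) is exactly the classical residual-code proof of \eqref{ie_j_2}, and your hyperplane argument for \eqref{ie_j_o} is precisely its orthogonal dual, which is consistent with the paper's equivalence Proposition --- you could equally well have proved only \eqref{ie_j_2} and invoked that Proposition to get \eqref{ie_j_o}. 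All the individual steps check out: the quotient correspondence is a bijection so $\#\mathcal{C}_P$ is not collapsed, the count of $\gaussmnum{v-k}{1}{q}$ hyperplanes through a fixed $k$-space via $U\mapsto U^{\perp}$ is right, and the standing assumption $k\ge 2$ ensures $\gaussmnum{k}{1}{q}\neq 0$ so the final division is legitimate. No gaps.
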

Note that for $d=2k$ Inequality~(\ref{ie_j_2}) gives $A_q(v,2k;k)\le \left\lfloor\frac{q^v-1}{q^k-1}\right\rfloor$ since we have $A_q(v-1,2k;k-1)=1$
by definition. Similarly, for $d=2(v-k)$, Inequality~(\ref{ie_j_o}) gives $A_q(v,2v-2k;k)\le \left\lfloor\frac{q^v-1}{q^{v-k}-1}\right\rfloor$.

Some sources like \cite[Theorem~3]{xia2009johnson} list just Inequality~\ref{ie_j_2} and omit
Inequality~\ref{ie_j_o}. This goes in line with the treatment of the classical Johnson type bound II for binary error-correcting codes,
see e.g.\ \cite[Theorem~4 on page 527]{MR0465510}, where the other bound is formulated as Problem~(2) on page 528 with the hint that ones should
be replaced by zeros. Analogously, we can consider orthogonal codes:
\begin{proposition}
  Inequality~(\ref{ie_j_2}) and Inequality~(\ref{ie_j_o}) are equivalent using orthogonality, cf.~\cite[Section III, esp.~Lemma~13]{MR2810308}.
\end{proposition}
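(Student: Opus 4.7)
The plan is to use the fact that taking orthogonal complements with respect to a non-degenerate bilinear form gives a bijection on $\PFqn$ which preserves the subspace distance ($\sdist(U,W)=\sdist(U^\perp,W^\perp)$) and sends $k$-dimensional subspaces to $(v-k)$-dimensional ones. This immediately yields the symmetry $A_q(v,d;k)=A_q(v,d;v-k)$ for all admissible parameters.

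To deduce Inequality~(\ref{ie_j_o}) from Inequality~(\ref{ie_j_2}), I would apply Inequality~(\ref{ie_j_2}) to the parameter triple $(v,d,v-k)$ in place of $(v,d,k)$, obtaining
\[
  A_q(v,d;v-k) \le \frac{q^v-1}{q^{v-k}-1}\, A_q(v-1,d;v-k-1).
\]
Then I would rewrite the left hand side via orthogonality in $\Fq^v$ as $A_q(v,d;k)$, and rewrite the right hand side via orthogonality in $\Fq^{v-1}$ using $(v-1)-(v-k-1)=k$, giving $A_q(v-1,d;v-k-1)=A_q(v-1,d;k)$. Substitution yields exactly Inequality~(\ref{ie_j_o}).

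The converse direction is entirely symmetric: apply Inequality~(\ref{ie_j_o}) with $k$ replaced by $v-k$, so that the factor becomes $\tfrac{q^v-1}{q^{v-(v-k)}-1}=\tfrac{q^v-1}{q^k-1}$, and then translate back using $A_q(v,d;v-k)=A_q(v,d;k)$ and $A_q(v-1,d;v-k)=A_q(v-1,d;k-1)$ to recover Inequality~(\ref{ie_j_2}).

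There is no real obstacle here; the only thing to watch is that the orthogonality identity is applied in the correct ambient space in each factor (once in $\Fq^v$ and once in $\Fq^{v-1}$), and that the dimension arithmetic $(v-1)-(v-k-1)=k$ and $(v-1)-(v-k)=k-1$ lines up so that the exponent in the geometric factor and the dimension parameter of the smaller code match simultaneously. Since the claim is purely an algebraic manipulation of the two stated inequalities together with the orthogonality duality, a few lines suffice.
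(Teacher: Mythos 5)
Your argument is exactly the paper's proof: apply Inequality~(\ref{ie_j_2}) to the orthogonal parameters $(v,d,v-k)$ and translate back via $A_q(v,d;k)=A_q(v,d;v-k)$ and $A_q(v-1,d;v-k-1)=A_q(v-1,d;k)$ to obtain Inequality~(\ref{ie_j_o}), and symmetrically for the converse. The dimension bookkeeping you flag is handled identically in the paper, so there is nothing to add.
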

\begin{proof}
  We have
  \begin{eqnarray*}
    A_q(v,d;k)&=&A_q(v,d;v-k)\overset{(\ref{ie_j_2})}{\le}
    \frac{q^v-1}{q^{v-k}-1} A_q(v-1,d;v-k-1)
    \\
    &=&
    \frac{q^v-1}{q^{v-k}-1} A_q(v-1,d;k),
  \end{eqnarray*}
  which is Inequality~(\ref{ie_j_o}), and
  \begin{eqnarray*}
    A_q(v,d;k)&=&A_q(v,d;v-k)\overset{(\ref{ie_j_o})}{\le}
    \frac{q^v-1}{q^{k}-1} A_q(v-1,d;v-k)
    \\
    &=&
    \frac{q^v-1}{q^{k}-1} A_q(v-1,d;k-1),
  \end{eqnarray*}
  which is Inequality~(\ref{ie_j_2}).
  
\end{proof}

Of course, the bounds in Theorem~\ref{thm_johnson_II} can be applied iteratively. In the classical Johnson space the optimal
order of the corresponding inequalities is unclear, see e.g.\ \cite[Research Problem 17.1]{MR0465510}. Denoting the maximum size
of a binary constant-weight block code of length $n$, Hamming distance $d$ and weight $k$ by $A(n,d,w)$, the two corresponding
variants of the inequalities in Theorem~\ref{thm_johnson_II} are $A(n,d,w) \le \lfloor n/w \cdot A(n-1,d,w-1) \rfloor$
and $A(n,d,w) \le \lfloor n/(n-w) \cdot A(n-1,d,w) \rfloor$. Applying the first bound yields
\[
  A(28,8,13) \le \lfloor 28/13 \cdot A(27,8,12) \rfloor \le \lfloor 28/13 \cdot 10547 \rfloor = 22716
\]
while applying the second bound yields
\[
  A(28,8,13) \le \lfloor 28/15 \cdot A(27,8,13) \rfloor \le \lfloor 28/15 \cdot 11981 \rfloor = 22364
\]
using the numerical bounds from \begin{center}\url{http://webfiles.portal.chalmers.se/s2/research/kit/bounds/cw.html}, cf.~\cite{agrell2000upper}.\end{center}
The authors of \cite{MR2810308,khaleghi2009subspace} state that the optimal choice of Inequality~(\ref{ie_j_2}) or
Inequality~(\ref{ie_j_o}) is unclear, too. However, this question is much easier to answer for constant dimension codes.
\begin{proposition}
  \label{prop_optimal_johnson}
  For $k \le v/2$ we have
  \[
    \left\lfloor \frac{q^v-1}{q^k-1} A_q(v-1,d;k-1) \right\rfloor
    \le
    \left\lfloor \frac{q^v-1}{q^{v-k}-1} A_q(v-1,d;k) \right\rfloor,
  \]
  where equality holds iff $v=2k$.
\end{proposition}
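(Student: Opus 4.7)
I plan to establish the stronger unrounded inequality
\[
(q^{v-k}-1)\,A_q(v-1,d;k-1)\;\le\;(q^k-1)\,A_q(v-1,d;k), \tag{$\star$}
\]
which, after multiplication by the positive factor $\tfrac{q^v-1}{(q^k-1)(q^{v-k}-1)}$, becomes the unrounded form of the proposition and therefore immediately implies the floored statement. The boundary case $v=2k$ then drops out at once: the two coefficients $q^{v-k}-1$ and $q^k-1$ coincide, and orthogonality in the $(v-1)$-dimensional ambient space yields $A_q(v-1,d;k-1)=A_q(v-1,d;(v-1)-(k-1))=A_q(v-1,d;v-k)=A_q(v-1,d;k)$, so $(\star)$ holds with equality, and hence so does the floored form.

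For the strict inequality when $k<v/2$, my first move is to apply the two Johnson type II bounds of Theorem~\ref{thm_johnson_II} inside dimension $v-1$. Applying Inequality~(\ref{ie_j_o}) to $A_q(v-1,d;k-1)$ (using $(v-1)-(k-1)=v-k$) gives
$(q^{v-k}-1)\,A_q(v-1,d;k-1)\le(q^{v-1}-1)\,A_q(v-2,d;k-1)$,
while Inequality~(\ref{ie_j_2}) applied to $A_q(v-1,d;k)$ gives
$(q^k-1)\,A_q(v-1,d;k)\le(q^{v-1}-1)\,A_q(v-2,d;k-1)$.
The main obstacle now becomes visible: these two inequalities share the same right-hand side and therefore, on their own, do not rank the two sides of $(\star)$. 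This is the heart of the difficulty, and any proof must introduce additional information beyond the two single-step Johnson bounds.

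To break the tie I would reformulate $(\star)$ via the Gaussian identity $\gaussmnum{v-1}{k}{q}/\gaussmnum{v-1}{k-1}{q}=(q^{v-k}-1)/(q^k-1)$ as the density comparison
\[
\frac{A_q(v-1,d;k-1)}{\gaussmnum{v-1}{k-1}{q}}\;\le\;\frac{A_q(v-1,d;k)}{\gaussmnum{v-1}{k}{q}}.
\]
Because $k\le (v-1)/2$ when $k<v/2$, both densities lie on the increasing branch of the profile $j\mapsto A_q(v-1,d;j)/\gaussmnum{v-1}{j}{q}$, which is symmetric around $(v-1)/2$ by orthogonality. I would then establish this monotonicity of the density by a tailored double-counting in $\mathbb{F}_q^{v-1}$ exploiting that every $(k-1)$-subspace is contained in exactly $(q^{v-k}-1)/(q-1)$ $k$-subspaces, together with the fact that for $d\ge 4$ no two codewords of a cdc can lie inside a common $(k+1)$-dimensional subspace. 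The strictness for $k<v/2$ then follows from the strict comparison $q^{v-k}-1>q^k-1$ carried through this averaging step.
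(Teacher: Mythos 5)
Your overall plan --- establish the unrounded inequality $(\star)$ and let the floors take care of themselves --- is genuinely different from the paper's proof, which instead shows the stronger statement $\frac{q^v-1}{q^k-1}A_q(v-1,d;k-1)+1\le\frac{q^v-1}{q^{v-k}-1}A_q(v-1,d;k)$ for $k<v/2$ by bounding the two sides \emph{independently}: the right-hand side from below by the size of a lifted MRD code, the left-hand side from above by the Anticode bound together with the estimate $\gaussmnum{v-1}{k-1}{q}/\gaussmnum{v-k+d/2-1}{d/2-1}{q}\le q^{(v-k)(k-d/2)}\prod_{i=d/2}^{k-1}\left(1-q^{-i}\right)^{-1}$, followed by a case analysis in $q$ and $d$. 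As written, your route has a genuine gap at its core: the entire content of the claim is delegated to the ``density monotonicity'' $A_q(v-1,d;k-1)/\gaussmnum{v-1}{k-1}{q}\le A_q(v-1,d;k)/\gaussmnum{v-1}{k}{q}$, for which you give only a sketch. The two ingredients you name (each $(k-1)$-space lies in $\gaussmnum{v-k}{1}{q}$ $k$-spaces; no two codewords of a $d\ge 4$ cdc share a $(k+1)$-space) yield upper bounds on code sizes, or large families of $k$-spaces that are \emph{not} codes (two $k$-spaces over the same $(k-1)$-codeword have subspace distance $2$); they do not produce the needed lower bound on $A_q(v-1,d;k)$ in terms of $A_q(v-1,d;k-1)$. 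Moreover, no generic double-counting argument can succeed, because the analogous monotonicity is false in the classical Johnson scheme: $A(10,6,3)/\binom{10}{3}=3/120=1/40$ strictly exceeds $A(10,6,4)/\binom{10}{4}=5/210=1/42$. So the $q$-analog statement, if true, must be proved from quantitative information specific to subspace codes --- which is exactly what the paper's LMRD-versus-Anticode comparison supplies.

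There is a second gap concerning the ``equality iff $v=2k$'' clause. Even granting $(\star)$ with strict inequality for $k<v/2$, strictness of an inequality between real numbers is not preserved by flooring ($2.1<2.9$, yet both floors equal $2$), so your final sentence does not establish that the floored inequality is strict. One needs a gap of at least $1$ between the unrounded quantities, which is precisely why the paper proves the version with the extra ``$+1$'' on the left-hand side. Your treatment of the case $v=2k$ via orthogonality is correct and agrees with the paper, and your diagnosis that the one-step Johnson bounds alone cannot separate the two sides is accurate; but the proof of the main case is missing.
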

\begin{proof}
  By considering orthogonal codes we obtain equality for $v=2k$. Now we assume $k < v/2$ and show
  \begin{equation}
    \label{ie_wi_1}
    \frac{q^v-1}{q^k-1} A_q(v-1,d;k-1) + 1   \le   \frac{q^v-1}{q^{v-k}-1} A_q(v-1,d;k),
  \end{equation}
  which implies the proposed statement. Considering the size of the LMRD code we can lower bound the right hand side
  of Inequality~(\ref{ie_wi_1}) to
  \[
    \frac{q^v-1}{q^{v-k}-1} A_q(v-1,d;k) \ge \frac{q^v-1}{q^{v-k}}\cdot q^{(v-k-1)(k-d/2+1)}.
  \]
  Since
  \[
    \frac{\gaussmnum{v-1}{k-1}{q}}{\gaussmnum{v-k+d/2-1}{d/2-1}{q}}=\frac{\prod\limits_{i=1}^{k-1} \frac{q^{v-k+i}-1}{q^i-1}}
    {\prod\limits_{i=1}^{d/2-1} \frac{q^{v-k+i}-1}{q^i-1}}\le \prod\limits_{i=d/2}^{k-1} \frac{q^{v-k+i}}{q^i-1}=
    q^{(v-k)(k-d/2)} \prod\limits_{i=d/2}^{k-1} \frac{1}{1-q^{-i}}
  \]
  we can use the Anticode bound to upper bound the left hand side of Inequality~(\ref{ie_wi_1}) to
  \[
    \frac{q^v-1}{q^k-1} A_q(v-1,d;k-1) + 1\le \frac{q^v-1}{q^k-1}\cdot q^{(v-k)(k-d/2)}\cdot \mu(k-1,d/2,q)+1,
  \]
  where $\mu(a,b,q):=\prod\limits_{i=b}^{a} \left(1-q^{-i}\right)^{-1}$. Thus, it suffices to verify
  \begin{equation}
    \label{ie_wi_2}
    \frac{q^{k-d/2+1}}{q^k-1}\cdot \mu(k-1,d/2,q)+\frac{1}{f}
    \le 1,
  \end{equation}
  where we have divided by
  \[
    f:=\frac{q^v-1}{q^{v-k}}\cdot q^{(v-k-1)(k-d/2+1)}
    =\frac{q^v-1}{q}\cdot q^{(v-k-1)(k-d/2)}.
  \]
  Since $d\ge 4$, we have $\mu(k-1,d/2,q)\le  \prod\limits_{i=2}^{\infty} \left(1-q^{-i}\right)^{-1} \le
  \prod\limits_{i=2}^{\infty} \left(1-2^{-i}\right)^{-1}<1.74$. Since $v\ge 4$ and $q\ge 2$, we have $\frac{1}{f}\le \frac{2}{15}$.
  Since $k\ge 2$, we have $\frac{q^{k-d/2+1}}{q^k-1}\le \frac{q}{q^2-1}$, which is at most $\frac{3}{8}$ for $q\ge 3$. Thus,
  Inequality~(\ref{ie_wi_2}) is valid for all $q\ge 3$.

  If $d\ge 6$ and $q=2$, then $\mu(k-1,d/2,q)\le  \prod\limits_{i=3}^{\infty} \left(1-2^{-i}\right)^{-1} <1.31$ and
  $\frac{q^{k-d/2+1}}{q^k-1}\le \frac{1}{3}$, so that Inequality~(\ref{ie_wi_2}) is satisfied.

  In the remaining part of the proof we assume $d=4$ and $q=2$. If $k=2$, then $\mu(k-1,d/2,q)=1$ and $\frac{q^{k-d/2+1}}{q^k-1}=\frac{2}{3}$.
  If $k=3$, then $\mu(k-1,d/2,q)=\frac{4}{3}$ and $\frac{q^{k-d/2+1}}{q^k-1}=\frac{4}{7}$. If $k\ge 4$, then
  $\frac{q^{k-d/2+1}}{q^k-1}\le \frac{8}{15}$, $\mu(k-1,d/2,q)\le 1.74$, and $\frac{1}{f}\le \frac{2}{255}$ due to $v\ge 2k\ge 8$.
  Thus, Inequality~(\ref{ie_wi_2}) is valid in all cases.
  
\end{proof}

Knowing the optimal choice between Inequality~(\ref{ie_j_2}) and Inequality~(\ref{ie_j_o}), we can iteratively apply Theorem~\ref{thm_johnson_II}
in an ideal way initially assuming $k\le v/2$:
\begin{corollary}
\textbf{(Implication of the Johnson type bound II)}
\label{cor_johnson_opt}
\[
A_q(v,d;k)
\le
\left\lfloor \frac{q^{v}\!-\!1}{q^{k}\!-\!1} \left\lfloor \frac{q^{v\!-\!1}\!-\!1}{q^{k\!-\!1}\!-\!1} \left\lfloor \ldots
\left\lfloor \frac{q^{v\!-\!k\!+\!d/2\!+\!1}\!-\!1}{q^{d/2\!+\!1}\!-\!1} A_q(v\!-\!k\!+\!d/2,d;d/2) \right\rfloor
\ldots \right\rfloor \right\rfloor \right\rfloor
\]
\end{corollary}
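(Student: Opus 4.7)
The plan is a straightforward induction on $k$, repeatedly applying the stronger of the two Johnson-type inequalities in Theorem~\ref{thm_johnson_II} and using the integrality of $A_q(\cdot,d;\cdot)$ to push the floor inside at each step. Proposition~\ref{prop_optimal_johnson} already identifies Inequality~(\ref{ie_j_2}) as the (weakly) tighter choice whenever $k\le v/2$, so what needs to be checked is that this hypothesis survives throughout the iteration.

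Concretely, I start from $A_q(v,d;k)$ with $k\le v/2$, apply Inequality~(\ref{ie_j_2}) once, and round down (legal since the left hand side is an integer), obtaining $A_q(v,d;k)\le \lfloor (q^v-1)/(q^k-1)\cdot A_q(v-1,d;k-1)\rfloor$. Next I wish to iterate on the inner quantity $A_q(v-1,d;k-1)$. Before reinvoking Proposition~\ref{prop_optimal_johnson} at the new parameters I verify that $(k-1)\le (v-1)/2$, which follows from $v\ge 2k$, and that $k-1\ge 2$, $d\ge 4$ still hold (guaranteed as long as $k-1\ge d/2\ge 2$). Each further step decreases both the ambient dimension and the codeword dimension by one, substitutes the resulting bound into the previously obtained expression, and floors the outer integer quantity. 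The chain terminates after $k-d/2$ applications at parameters $(v-k+d/2,d;d/2)$, where the codeword dimension has reached $d/2$ and no further Johnson reduction is performed; this produces exactly the nested-floor expression in the statement.

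The only obstacle is pure bookkeeping: that Proposition~\ref{prop_optimal_johnson} is legitimately invokable at every intermediate stage $i\in\{0,1,\ldots,k-d/2-1\}$, which reduces to $(k-i)\le (v-i)/2$, $k-i\ge 2$, and $d\ge 4$. All three follow immediately from the global hypotheses $v\ge 2k$, $d\ge 4$ (even), and the termination index $k-i=d/2$. No analytic content beyond Theorem~\ref{thm_johnson_II} and Proposition~\ref{prop_optimal_johnson} enters.
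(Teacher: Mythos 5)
Your proposal is correct and matches the paper's (essentially one-line) justification: the corollary is obtained by iterating Inequality~(\ref{ie_j_2}) exactly $k-d/2$ times, flooring at each step by integrality, with Proposition~\ref{prop_optimal_johnson} only serving to certify that this is the better of the two Johnson reductions at every stage (your bookkeeping that $k-i\le (v-i)/2$ persists is the right check, and is immediate from $v\ge 2k$). No difference in approach worth noting.
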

We remark that this upper bound is commonly stated in an explicit version, where
$A_q(v\!-\!k\!+\!d/2,d;d/2)\le \left\lfloor\frac{q^{v-k+d/2}-1}{q^{d/2}-1}\right\rfloor$ is inserted, see e.g.\ \cite[Theorem~6]{MR2810308},
\cite[Theorem~7]{khaleghi2009subspace}, and \cite[Corollary~3]{xia2009johnson}. However, currently much better bounds for partial spreads
are available.

It is shown in \cite{xia2009johnson} that the Johnson bound of Theorem~\ref{thm_johnson_II} improves on the Anticode bound in
Theorem~\ref{thm_anticode}, see also~\cite{MR3063504}. To be more precise, removing the floors in the upper bound of
Corollary~\ref{cor_johnson_opt} and replacing $A_q(v-k+d/2,d;d/2)$ by $\frac{q^{v-k+d/2}-1}{q^{d/2}-1}$ gives
\[
  \prod_{i=0}^{k-d/2} \frac{q^{v-i}-1}{q^{k-i}-1}
  =
\frac{\prod_{i=0}^{k-1} \frac{q^{v-i}-1}{q^{k-i}-1}}{\prod_{i=k-d/2+1}^{k-1} \frac{q^{v-i}-1}{q^{k-i}-1}}
  =\frac{\gaussmnum{v}{k}{q}}{\gaussmnum{v-k+d/2-1}{d/2-1}{q}},
\]
which is the right hand side of the Anticode bound for $k\le v-k$. So, all upper bounds mentioned so far are (weakly) dominated by
Corollary~\ref{cor_johnson_opt}, if we additionally assume $k\le v-k$.
As a possible improvement \cite[Theorem~3]{ahlswede2009error} was mentioned as \cite[Theorem~8]{khaleghi2009subspace}. Here, we
correct typos and give a slightly enlarged proof, thanks to a personal communication with Aydinian.
\begin{theorem}
  \label{thm_ahlswede}
  \cite[Theorem~3]{ahlswede2009error}
  For integers $0\le t< r\le k$, $k-t\le m\le v$, and $t\le v-m$ we have
  \[
    A_q(v,2r;k)\le \frac{\gaussmnum{v}{k}{q} A_q(m,2r-2t;k-t)}{\sum_{i=0}^t q^{i(m+i-k)}\gaussmnum{m}{k-i}{q}\gaussmnum{v-m}{i}{q}}.
  \]
\end{theorem}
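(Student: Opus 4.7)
The plan is to combine Corollary~\ref{cor_ahlswede} with Lemma~\ref{lemma_gsphere}, and then to ``shrink'' the resulting subcode to a code on a smaller Grassmannian by intersecting with the fixed $m$-space. First, fix any $m$-dimensional subspace $W\le V$ and let
\[
\mathcal{B}:=\left\{U\in\Gqnk\;\middle|\;\dim(U\cap W)\ge k-t\right\}.
\]
Lemma~\ref{lemma_gsphere}, applied with the given $t$ and $m$, identifies $|\mathcal{B}|$ with the denominator in the claimed inequality. Since $GL(V)$ acts transitively on $\Gqnk$ (equivalently, $\mathrm{Aut}(\Gamma)$ is transitive on the vertices of the Grassmann graph), Corollary~\ref{cor_ahlswede} applied to an optimal code $\mathcal{C}\subseteq\Gqnk$ with minimum subspace distance $2r$ yields a code $\mathcal{C}^{*}\subseteq\mathcal{B}$ with the same minimum distance $\ge 2r$ and
\[
|\mathcal{C}^{*}|\;\ge\;\frac{|\mathcal{C}|\cdot|\mathcal{B}|}{\gaussmnum{v}{k}{q}}\;=\;\frac{A_q(v,2r;k)\cdot|\mathcal{B}|}{\gaussmnum{v}{k}{q}}.
\]

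Next, for every $U\in\mathcal{C}^{*}$, choose an arbitrary $(k-t)$-dimensional subspace $U'\subseteq U\cap W$; this is possible because $\dim(U\cap W)\ge k-t$ by construction. Each $U'$ lies in $\gaussmset{W}{k-t}$, and I claim that the assignment $U\mapsto U'$ is injective and that the image is a constant dimension code in $W\cong\mathbb{F}_q^{m}$ with minimum subspace distance at least $2r-2t$. Indeed, for distinct $U_1,U_2\in\mathcal{C}^{*}$ we have $U_1'\cap U_2'\subseteq U_1\cap U_2$, hence
\[
\dim(U_1'\cap U_2')\;\le\;\dim(U_1\cap U_2)\;=\;k-\tfrac{1}{2}\sdist(U_1,U_2)\;\le\;k-r.
\]
Therefore
\[
\sdist(U_1',U_2')\;=\;2(k-t)-2\dim(U_1'\cap U_2')\;\ge\;2(r-t)\;>\;0,
\]
where we used the hypothesis $r>t$. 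This simultaneously shows $U_1'\ne U_2'$ (injectivity) and the distance lower bound.

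Consequently $|\mathcal{C}^{*}|\le A_q(m,2r-2t;k-t)$, and combining with the lower bound on $|\mathcal{C}^{*}|$ from the transitivity step gives exactly the claimed inequality after rearrangement. The only subtle point is the distance–preservation step: without the assumption $r>t$ the images $U'$ could coincide, so the strict inequality $t<r$ in the hypothesis is what makes the shrinking argument go through; everything else is bookkeeping with $q$-binomial identities and sphere counting from Lemma~\ref{lemma_gsphere}.
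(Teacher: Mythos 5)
Your argument is correct and follows essentially the same route as the paper: the same anticode-type set $\mathcal{B}$ counted by Lemma~\ref{lemma_gsphere}, the same application of Corollary~\ref{cor_ahlswede}, and the same shrinking of codewords to $(k-t)$-dimensional subspaces inside $W$, with $t<r$ guaranteeing injectivity. The only cosmetic difference is that the paper first forms $\mathcal{C}'=\mathcal{C}^*\cap W$ and then shrinks to dimension $k-t$, whereas you pick the $(k-t)$-dimensional subspace of $U\cap W$ in one step; the distance bookkeeping is equivalent.
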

\begin{proof}
  Let $W$ be a fixed subspace with $\dim(W)=m$ and define
  \[
    \mathcal{B}=\left\{U\in{\Gqnk} \mid \dim(U\cap W)\ge k-t \right\},
  \]
  so that $\#\mathcal{B}$ is given by Lemma~\ref{lemma_gsphere}.
  Consider a $(v,\#\mathcal{C}^*,d;k)$ code $\mathcal{C}^*\subseteq \mathcal{B}$ and take $\mathcal{C}':=\mathcal{C}^*\cap W$ noting
  that the latter has a minimum distance of at least $2r-2t$. Two arbitrary codewords $U_1\neq U_2\in\mathcal{C}'$ have distance
  $\sdist(U_1,U_2)\ge 2r-2t+i+j$, where we write $\dim(U_1)=k-t+i$ and $\dim(U_2)=k-t+j$ for integers $0\le i,j\le t$.
  Replacing each codeword of $\mathcal{C}'$ by an arbitrary $k-t$-dimensional subspace, we obtain a cdc $\mathcal{C}$ with a minimum
  distance of at least $2r-2t$. Since $t<r$ we have $\#\mathcal{C}^*=\#\mathcal{C}'=\#\mathcal{C}$, so that Corollary~\ref{cor_ahlswede}
  gives the proposed upper bound.
  
\end{proof}
As Theorem~\ref{thm_ahlswede} has quite some degrees of freedom, we partially discuss the optimal choice of parameters.
For $t=0$ and $m\le v-1$, we obtain $A_q(v,d;k)\le \gaussmnum{v}{k}{q}/\gaussmnum{m}{k}{q}\cdot A_q(m,d;k)$, which is
the $(v-m)$-fold iteration of Inequality~(\ref{ie_j_o}) of the Johnson bound (without rounding). Thus, $m=v-1$ is the best choice
for $t=0$, yielding a bound that is equivalent to Inequality~(\ref{ie_j_o}). For $t=1$ and $m=v-1$ the bound can be rewritten to
$A_q(v,d;k)\le  A_q(v-1,d-2;k-1)$, see the proof of Proposition~\ref{prop_ahlswede_partial_spread}. For $t> v-m$ the bound remains
valid but is strictly weaker than
for $t=v-m$. Choosing $m=v$ gives the trivial bound $A_q(v,2r;k)\le A_q(m,2r-2t;k-t)$.
For the range of parameters $2\le q\le 9$, $4\le v\le 100$, limited facing nerve-jangling numerical pitfalls, and
$4\le d\le2k\le v$, where $q$ is of course a prime power and $d$ is even, the situation is as follows. If $d\neq 2k$, there are no proper
improvements with respect to Theorem~\ref{thm_johnson_II}. For the case $d=2k$, i.e., partial
spreads treated in the next subsection, we have some improvements compared to $\lfloor(q^v-1)/(q^k-1)\rfloor$ which is the most trivial bound for partial spreads.
Within our numerical range, most of them are
covered by the following proposition, where we apply Theorem~\ref{thm_ahlswede} with $t=1$ and $m=v-1$ to $A_q(v,2k;k)$.
The other cases are due to the fact that Theorem~\ref{theorem_ps_bound_2} is tighter than Theorem~\ref{theorem_ps_bound_1} for larger values of $z$.
In no case a proper improvement with respect to the tighter bounds from the next subsection emerged.

\begin{proposition}
  \label{prop_ahlswede_partial_spread}
    For $w \ge 1$ and $k \ge q^w+3$ we have $A_q(2k+w,2k;k)\le$
  \[
\left\lfloor\frac{\gaussmnum{2k+w}{k}{q} A_q(2k+w-1,2k-2;k-1)}{\sum_{i=0}^{1}
q^{i(k+w-1+i)} \gaussmnum{2k+w-1}{k-i}{q} \gaussmnum{(2k+w)-(2k+w-1)}{i}{q}} \right\rfloor
<\left\lfloor\frac{q^{2k+w}-1}{q^{k}-1} \right\rfloor=q^{k+w}+q^w
\]
\end{proposition}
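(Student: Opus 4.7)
The plan is to instantiate Theorem~\ref{thm_ahlswede} with $t=1$ and $m=v-1$, collapse the denominator via the $q$-Pascal recurrence, and then invoke the sharper partial-spread bounds of Subsection~\ref{subsec_bounds_partial_spreads} to obtain the final strict inequality.

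First I would set $v=2k+w$, $r=k$, $t=1$, $m=v-1=2k+w-1$ in Theorem~\ref{thm_ahlswede}. The hypotheses $0\le t<r\le k$, $k-t\le m\le v$, and $t\le v-m$ all hold, the last because $v-m=1=t$ and $k\ge q^w+3\ge 5$ in particular forces $k\ge 2$. This immediately yields the middle expression stated in the proposition.

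Next I would simplify the denominator. Only $i\in\{0,1\}$ contribute; for $i=0$ one gets $\gaussmnum{2k+w-1}{k}{q}$, and for $i=1$ the exponent $i(m+i-k)$ equals $k+w$ and $\gaussmnum{v-m}{i}{q}=1$, so the contribution is $q^{k+w}\gaussmnum{2k+w-1}{k-1}{q}$. The $q$-Pascal identity
\[
  \gaussmnum{2k+w}{k}{q}=q^{k+w}\gaussmnum{2k+w-1}{k-1}{q}+\gaussmnum{2k+w-1}{k}{q}
\]
therefore collapses the denominator to $\gaussmnum{2k+w}{k}{q}$, which cancels the leading $q$-binomial in the numerator. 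So the Ahlswede bound reduces to
\[
  A_q(2k+w,2k;k)\le A_q(2k+w-1,2k-2;k-1),
\]
i.e.\ to an upper bound on partial $(k-1)$-spreads in the ambient space $\mathbb{F}_q^{2(k-1)+(w+1)}$.

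For this partial-spread quantity the trivial estimate $\lfloor(q^{2k+w-1}-1)/(q^{k-1}-1)\rfloor=q^{k+w}+q^{w+1}$ (the closed form needing $k-1>w+1$, which is granted by $k\ge q^w+3$) overshoots the target $q^{k+w}+q^w$ by $q^w(q-1)$ and so is too weak. I would close this gap by invoking the improved partial-spread bounds collected in Subsection~\ref{subsec_bounds_partial_spreads} (Theorem~\ref{theorem_ps_bound_1}, supplemented by Theorem~\ref{theorem_ps_bound_2} in the boundary regime where the overhead $z=w+1$ is comparatively large); the hypothesis $k\ge q^w+3$ is precisely tailored to the regime in which those bounds apply and land strictly below $q^{k+w}+q^w$. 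The final equality $\lfloor(q^{2k+w}-1)/(q^k-1)\rfloor=q^{k+w}+q^w$ follows at once from $(q^{2k+w}-1)/(q^k-1)=q^{k+w}+q^w+(q^w-1)/(q^k-1)$ together with $k>w$.

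The main obstacle is this last partial-spread step: the $q$-binomial bookkeeping that reduces Ahlswede to a spread problem is routine, but verifying that the bounds of Subsection~\ref{subsec_bounds_partial_spreads} actually beat the trivial bound by the required $q^w(q-1)$, and that the threshold $k\ge q^w+3$ (rather than the weaker $k\ge w+3$ needed only for the floor evaluation) is exactly where those bounds kick in, is the substantive part of the argument.
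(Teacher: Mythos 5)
Your reduction is exactly the paper's: with $t=1$, $m=v-1$ in Theorem~\ref{thm_ahlswede} the $q$-Pascal identity collapses the denominator to $\gaussmnum{2k+w}{k}{q}$ and the middle expression becomes $A_q(2k+w-1,2k-2;k-1)$, and your evaluations of the two floors are also correct. But the step you yourself flag as ``the substantive part'' --- showing that the improved partial-spread bounds actually land strictly below $q^{k+w}+q^w$ --- is precisely what the proposition is about, and you leave it as a declaration of intent rather than carrying it out. As it stands the proof is incomplete at its only nontrivial point.

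The missing verification is a single concrete instantiation of Theorem~\ref{theorem_ps_bound_1} (Theorem~\ref{theorem_ps_bound_2} is not needed here): apply it to $A_q(2k+w-1,2k-2;k-1)$, i.e.\ with codeword dimension $k-1$ and $v'=2(k-1)+(w+1)$, taking $t=2$, $r=w+1$, and $z=\gaussmnum{w}{1}{q}-1$. The requirement $k-1=\gaussmnum{w+1}{1}{q}+1-z+u$ with $u\ge 0$ unwinds to $k-1\ge q^w+2$, which is exactly your hypothesis $k\ge q^w+3$ (and $z\le\gaussmnum{w+1}{1}{q}/2$ holds since $\gaussmnum{w}{1}{q}\le q^w+2$). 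One computes $l=q^{w+1}$, hence $lq^{k-1}=q^{k+w}$, and the bound reads
\[
  A_q(2k+w-1,2k-2;k-1)\le q^{k+w}+1+\left(\gaussmnum{w}{1}{q}-1\right)(q-1)=q^{k+w}+q^w-q+1<q^{k+w}+q^w,
\]
which closes the gap of $q^w(q-1)$ you identified. Without this computation the proof does not establish the strict inequality.
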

\begin{proof}
  Note that $k \ge q^w+3$ implies $w < k$. The left hand side simplifies to
  \[
\frac{\gaussmnum{2k+w}{k}{q} A_q(2k+w-1,2k-2;k-1)}{\sum_{i=0}^{1}
q^{i(k+w-1+i)} \gaussmnum{2k+w-1}{k-i}{q} \gaussmnum{(2k+w)-(2k+w-1)}{i}{q}}
=
A_q(2k+w-1,2k-2;k-1)
.
  \]
  Then we apply Theorem~\ref{theorem_ps_bound_1} with $t=2$, $r=w+1$, and $z=\gaussmnum{w}{1}{q}-1$,
  which yields $A_q(2k+w-1,2k-2;k-1)\le q^{k+w}+1+q^w-q<q^{k+w}+q^w$ for $k-1\ge q^w+2$.

\end{proof}
We remark that applying Theorem~\ref{theorem_ps_bound_2} and Theorem~\ref{theorem_ps_bound_1} directly is at least as good as the application of Theorem~\ref{thm_ahlswede}
with $t=1$ and $m=v-1$ for $d=2k$.

The Delsarte linear programming bound for the $q$-Johnson scheme was obtained in \cite{delsarte1978hahn}. However, numerical computations indicate
that it is not better than the Anticode bound, see \cite{MR3063504}. For $d\neq 2\min\{k,v-k\}$, i.e., the non-partial spread case, besides the
stated bound only the following two specific bounds, based on extensive computer calculations, are known:
\begin{theorem}
  \label{thm_specific_bound_1}
  \cite[Theorem~1]{MR3329980} $A_2(6,4;3)=77$
\end{theorem}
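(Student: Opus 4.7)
The plan is to prove $A_2(6,4;3)=77$ by establishing the two inequalities separately. The generic machinery reviewed in this section is insufficient by itself: Corollary \ref{cor_johnson_opt} combined with $A_2(5,4;2)=9$ yields only $A_2(6,4;3)\le\lfloor 63/7\cdot 9\rfloor=81$, while the Anticode bound of Theorem \ref{thm_anticode} gives the weaker $\gaussmnum{6}{3}{2}/\gaussmnum{4}{1}{2}=1395/15=93$. So neither direction is immediate, and the argument must narrow the window $\{77,\dots,81\}$ by combinatorial means.

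For the lower bound I would exhibit a $(6,77,4;3)_2$ code explicitly. A convenient seed is the lifted MRD code, which by Theorem \ref{thm_MRD_size} provides $M(2,3,6,4)=2^{3\cdot 2}=64$ codewords all sharing pivot vector $(1,1,1,0,0,0)$, after which one adjoins $13$ further $3$-subspaces with other pivot types. Lemma \ref{lemma_d_s_d_h} organises candidate extensions by pivot vector, and the rank criterion (\ref{eq_d_s_rk}) verifies $\sdist\ge 4$ on each pair. Prescribing a small cyclic group of automorphisms on $\F_2^6$ (for instance a Singer-like subgroup) keeps the search manageable; any valid extension furnishes $A_2(6,4;3)\ge 77$.

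For the upper bound I would invoke the Kramer--Mesner framework: prescribe a nontrivial subgroup $G\le GL_6(\F_2)$ and demand that a hypothetical code of size $N\in\{78,79,80,81\}$ is a union of $G$-orbits on the $\gaussmnum{6}{3}{2}=1395$ three-subspaces of $\F_2^6$. Existence becomes an integer linear program whose variables are orbit indicators and whose constraints forbid any two codewords from intersecting in a line. Running this ILP over a complete family of conjugacy classes of candidate groups and certifying infeasibility in every case yields $A_2(6,4;3)\le 77$.

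The hard part will be the final reduction to size $77$ when the prescribed automorphism group is trivial or nearly so, since there orbit shrinkage buys essentially nothing and one is forced to search in the full $1395$-variable indicator space. Mitigation strategies are (i) LP-relaxation and dual certificates, (ii) branch-and-bound on the multiset of pivot vectors of codewords (using Lemma \ref{lemma_d_s_d_h} to restrict their distribution), and (iii) structural lemmas that force a hypothetical code of size $\ge 78$ to contain a large lifted-MRD-like subcode, which can then be classified and shown to have no admissible completion. This exhaustive case analysis, carried out in detail in \cite{MR3329980}, is the principal obstacle and the reason the result is computer-assisted rather than closed-form.
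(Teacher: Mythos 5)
This theorem is not proved in the paper at all; it is quoted verbatim from \cite{MR3329980}, so there is no internal argument to compare against. Judged on its own terms, your proposal has one concrete, fatal flaw in the lower-bound half: you propose to keep the full lifted MRD code of size $M(2,3,6,4)=64$ and adjoin $13$ further planes. But Theorem~\ref{theo:MRD_upper_bound} (first bullet, with $d=2(k-1)=4$, $k=3$, $v=6$) caps any $(6,N,4;3)_2$ code that \emph{contains} a lifted MRD code at $N\le 2^{2(6-3)}+A_2(3,2;2)=64+7=71$; this is exactly the entry $\texttt{mrdb}=71<77=\texttt{bklb}$ in row $v=6$ of Table~\ref{table_cmp_numbers}. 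So no code of size $77$ contains an intact LMRD subcode, and your construction cannot succeed as described. The actual optimal codes are obtained only after \emph{expurgating} part of the LMRD layer before augmenting (the method of \cite{ai2016expurgation}), or by an unrelated construction; either way the "LMRD plus $13$" plan must be abandoned, not merely implemented carefully.

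The rest of the proposal is a reasonable description of the state of play (your computations $81$ for the Johnson bound via $A_2(5,4;2)=9$ and $93$ for the Anticode bound are correct, and the window $\{77,\dots,81\}$ is indeed what must be closed), but the upper-bound half is a plan rather than a proof: no prescribed groups are enumerated, no infeasibility certificates are produced, and you ultimately defer the entire case analysis back to \cite{MR3329980}. That is consistent with how the present paper treats the result --- as an imported, computer-assisted fact --- but it means your write-up establishes neither inequality.
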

\begin{proposition}
  \label{prop_specific_bound_2}
  \cite{heinlein2017new} $A_2(8,6;4)\le 272$
\end{proposition}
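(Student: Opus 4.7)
The plan is to rule out the existence of a $(8,273,6;4)_2$ constant dimension code $\mathcal{C}$ by combining the standard link-and-double-count technique with a computer-assisted feasibility check. Since $\sdist(U,W)\ge 6$ for $4$-dimensional subspaces of $\mathbb{F}_2^8$ translates, via \eqref{eq_d_s_rk}, into $\dim(U\cap W)\in\{0,1\}$ for distinct $U,W\in\mathcal{C}$, one is essentially counting collections of $4$-spaces that are pairwise near-disjoint. The naive iterated Johnson bound from Corollary~\ref{cor_johnson_opt} only yields $A_2(8,6;4)\le 17\cdot A_2(7,6;3)=17\cdot 17=289$, so the task is to sharpen this bound by $17$.

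The first step is to analyse the derived code at a point. For each point $P\in\PFqn$, let $a_P:=\#\{U\in\mathcal{C}\mid P\subseteq U\}$; passing to the quotient $V/P\cong\mathbb{F}_2^7$ yields a $(7,a_P,6;3)_2$ code, so $a_P\le A_2(7,6;3)=17$. Double-counting incident (point,codeword) pairs gives
\[
\sum_{P}a_P \;=\; \gaussmnum{4}{1}{2}\cdot\#\mathcal{C}\;=\;15\cdot\#\mathcal{C},
\]
and there are $\gaussmnum{8}{1}{2}=255$ points in $\Fqn$. For $\#\mathcal{C}=273$ the average $a_P$ is about $16.06$, which forces a substantial fraction of the points to have $a_P=17$, i.e., the link at such a point is a maximal partial line-spread of $\mathbb{F}_2^7$. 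Dually, for every hyperplane $H\le V$ one obtains a $(7,b_H,6;4)_2$ code of subspaces contained in $H$ (with $b_H\le A_2(7,6;4)=A_2(7,6;3)=17$), and each codeword not in $H$ meets $H$ in a $3$-space, yielding a second family of equalities of the same flavour.

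These constraints propagate to flags of higher dimension: for every line $L$ the number of codewords containing $L$ is at most $A_2(6,6;2)=1$ (since the distance condition forces at most a partial $2$-spread of $\mathbb{F}_2^6$ with $d=6$); for every solid (3-space) $S$ one gets a corresponding cap. Writing down all such incidence equalities and the upper bounds $A_2(v',6;k')$ for the relevant smaller parameters produces a large but highly structured system of linear constraints on the nonnegative integer variables $a_P$, $b_H$, and related multiplicities, and also constraints on their pairwise correlations (how often two points $P_1,P_2$ lie in a common codeword). One then checks by integer linear programming — together with the orthogonality symmetry $k\leftrightarrow v-k$, which halves the effective search — that this system is infeasible at $\#\mathcal{C}=273$.

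The main obstacle is the size of the resulting ILP: a direct formulation enumerating all $\gaussmnum{8}{4}{2}=200787$ candidate codewords is out of reach. The proof must therefore exploit auxiliary substructure, in particular a classification (or at least tight enumeration) of the extremal $17$-element link codes in $\mathbb{F}_2^7$ and of the orthogonal extremal objects in each hyperplane, and combine these with lexicographic case distinctions so that the remaining configurations can be ruled out by a tractable sequence of ILPs. Given the computational subtlety, the argument is necessarily a certified computer-assisted one, and its correctness rests on independent verification of the underlying solver outputs, exactly as in related proofs such as Theorem~\ref{thm_specific_bound_1}.
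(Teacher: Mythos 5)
First, note that the paper does not prove this proposition at all; it is quoted from \cite{heinlein2017new}, where it is obtained by a substantial computer-assisted classification. Your outline does correctly identify the strategy used there: pass to the quotient codes at points, observe $a_P\le A_2(7,6;3)=17$, anchor a case analysis at a point of maximal degree using the classification of the optimal $(7,17,6;3)_2$ codes (these are the maximal partial \emph{plane} spreads of $\operatorname{PG}(6,2)$, not line spreads as you write at one point), and finish with integer linear programming. As a description of the method you are essentially on target, including the honest admission that the argument must be a certified computation.

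The genuine gap is that every constraint you actually write down only reproduces the Johnson bound $289$, so nothing in the proposal produces the number $272$. Concretely: $a_P\le 17$ together with $\sum_P a_P=15\,\#\mathcal{C}$ gives $\#\mathcal{C}\le\left\lfloor 255\cdot 17/15\right\rfloor=289$; the dual hyperplane count $\sum_H b_H=15\,\#\mathcal{C}$ with $b_H\le 17$ gives exactly the same; and the line condition gives $35\,\#\mathcal{C}\le\gaussmnum{8}{2}{2}=10795$, i.e.\ $\#\mathcal{C}\le 308$, which is weaker. Hence the ``large but highly structured system of linear constraints on $a_P$, $b_H$'' you describe is in fact feasible at $\#\mathcal{C}=273$ (indeed up to $289$), and the entire content of the bound is hidden in the unspecified ``pairwise correlations'' and the unexecuted ILP. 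A proof would have to exhibit the additional structural input explicitly --- in \cite{heinlein2017new} this is the complete classification of the $(7,17,6;3)_2$ codes and an exhaustive, certified computation over the resulting cases --- and it is precisely this part that determines the value $272$ rather than, say, $280$. As written, the proposal is a plausible research plan rather than a proof: it could not be checked or reproduced from what is on the page.
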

As the authors of \cite{heinlein2017new} have observed, the Johnson bound of Theorem~\ref{thm_johnson_II} does not improve
upon Corollary~\ref{cor_johnson_opt} when applied to Theorem~\ref{thm_specific_bound_1} or Proposition~\ref{prop_specific_bound_2}.

If we additionally restrict ourselves to constant dimension codes, that contain a lifted MRD code, another upper bound is known:
\begin{theorem}\cite[Theorem~10~and~11]{MR3015712}\label{theo:MRD_upper_bound}
Let $\mathcal{C}\subseteq \gaussmset{\mathbb{F}_q^v}{k}$ be a constant dimension code, with $v\ge 2k$ and minimum
subspace distance $d$, that contains a lifted MRD code.
\begin{itemize}
\item If $d=2(k-1)$ and $k \ge 3$, then $\# \mathcal{C} \le q^{2(v-k)} + A_q(v-k,2(k-2);k-1)$;
\item if $d=k$, where $k$ is even, then $\#\mathcal{C} \le q^{(v-k)(k/2+1)} +
      \gaussmnum{v-k}{k/2}{q}\frac{q^v-q^{v-k}}{q^{k}-q^{k/2}} + A_q(v-k,k;k)$.
\end{itemize}
\end{theorem}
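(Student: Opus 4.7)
The plan is to decompose $\mathcal{C}$ as $\mathcal{M} \sqcup (\mathcal{C} \setminus \mathcal{M})$, where $\mathcal{M}$ is the ambient lifted MRD subcode of size $M(q,k,v,d) = q^{(v-k)(k-d/2+1)}$ --- matching the first term in each bullet --- and then to bound $|\mathcal{C} \setminus \mathcal{M}|$ by the remaining terms. Fix the coordinate splitting $V = E \oplus S$ with $E = \langle e_1,\ldots,e_k\rangle$ and $S = \langle e_{k+1},\ldots,e_v\rangle$, so that $\mathcal{M} = \{\mathrm{rowspace}[I_k \mid A] : A \in \mathcal{A}\}$ for a linear MRD matrix code $\mathcal{A}$ of minimum rank distance $d/2$, and every $M \in \mathcal{M}$ satisfies $M \cap S = 0$. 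For $U \in \mathcal{C} \setminus \mathcal{M}$ set $j := \dim(U \cap S)$. The projection $V \to E$ along $S$ restricts injectively to $U \cap M$ (since $M \cap S = 0$), so $\dim(U \cap M) \le k - j$. Combining this with the distance requirement $\dim(U \cap M) \le k - d/2$, and using that the residual linear map $\pi_E(U) \to S/(U \cap S)$ attached to $U$ must avoid the restrictions to $\pi_E(U)$ of every $A \in \mathcal{A}$, one forces $j \ge d/2$: otherwise a pigeonhole / coset argument exploiting the linearity and rank-distance structure of $\mathcal{A}$ produces an $A \in \mathcal{A}$ with $\dim(U \cap M) > k - d/2$, contradicting the minimum distance.

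For the first bullet ($d = 2(k-1)$, $k \ge 3$) this yields $j \in \{k-1, k\}$, so every $U \in \mathcal{C} \setminus \mathcal{M}$ contains a canonical $(k-1)$-dim subspace $\Phi(U) \subseteq S$: take $\Phi(U) = U \cap S$ if $j = k-1$, and take the canonical $(k-1)$-dim subspace of $U \subseteq S$ obtained by dropping the last row of $\tau(U)$ if $j = k$. For distinct $U_1, U_2 \in \mathcal{C} \setminus \mathcal{M}$ the distance condition gives $\dim(U_1 \cap U_2) \le 1$, hence $\dim(\Phi(U_1) \cap \Phi(U_2)) \le 1$; moreover $\Phi(U_1) \ne \Phi(U_2)$, since equality would force $\dim(U_1 \cap U_2) \ge k-1 \ge 2$. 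Thus $\Phi$ is injective and its image is a $(v-k, |\mathcal{C} \setminus \mathcal{M}|, 2(k-2); k-1)_q$ code, giving the bound $|\mathcal{C} \setminus \mathcal{M}| \le A_q(v-k, 2(k-2); k-1)$.

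For the second bullet ($d = k$, $k$ even) the first step gives $j \ge k/2$, and I split $\mathcal{C} \setminus \mathcal{M}$ into the $j = k$ part (codewords inside $S$, forming a cdc in $\mathbb{F}_q^{v-k}$ of dimension $k$ and subspace distance $\ge k$, hence bounded by $A_q(v-k,k;k)$ --- the third term) and the $k/2 \le j < k$ part $\mathcal{C}_2$. For $\mathcal{C}_2$, use the double-count $|\mathcal{C}_2| \le \sum_T \#\{U \in \mathcal{C}_2 : T \subseteq U \cap S\}$, where $T$ ranges over the $\gaussmnum{v-k}{k/2}{q}$ choices of $k/2$-dim subspace of $S$ and each $U \in \mathcal{C}_2$ appears at least once in the sum. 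Fixing $T$, pass to the quotient $V/T$ (dimension $v - k/2$, containing $S/T$ of dimension $v - 3k/2$); the contributing codewords correspond to $k/2$-dim subspaces $\overline{U} \subseteq V/T$ that pairwise intersect trivially --- since $T \subseteq U_1 \cap U_2$ and $\dim(U_1 \cap U_2) \le k/2$ force $U_1 \cap U_2 = T$ --- and that are not contained in $S/T$. A points-counting argument (points of $V/T$ outside $S/T$ are limited, each admissible $\overline{U}$ must use some of them, and the $\overline{U}$'s are pairwise disjoint) yields the per-$T$ bound $(q^v - q^{v-k})/(q^k - q^{k/2})$, producing the middle term on summation.

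The main obstacle is Step~1 for borderline $j$: the purely dimensional estimate $\dim(U \cap M) \le k - j$ does not exclude $j = d/2 - 1$, and one must combine the linearity of $\mathcal{A}$ with the rank-distance bound to exhibit an offending $A$ --- essentially, showing that the restriction map $\mathcal{A} \to \mathrm{Hom}(\pi_E(U), S/(U \cap S))$ is either surjective (by a pigeonhole on cardinalities) or intersects the low-rank locus (by a covering argument). A secondary difficulty is sharpening the partial-spread estimate in the second bullet via the $S/T$-avoidance constraint to precisely $(q^v - q^{v-k})/(q^k - q^{k/2})$, which requires careful handling of the various possible intersection dimensions $\overline{U} \cap (S/T)$ so that the full range $k/2 \le j < k$ (not just $j = k/2$) fits within the stated per-$T$ bound.
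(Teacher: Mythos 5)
The paper offers no proof of this statement---it is quoted directly from Etzion and Silberstein \cite{MR3015712}---so there is no internal argument to compare against and your reconstruction must be judged on its own. Its architecture is the right one. The key lemma (every $U\in\mathcal{C}\setminus\mathcal{M}$ has $\dim(U\cap S)\ge d/2$) is correct, and the clean form of your ``pigeonhole on cardinalities'' is: for any full-rank $X\in\mathbb{F}_q^{(k-d/2+1)\times k}$ the map $\mathcal{A}\to\mathbb{F}_q^{(k-d/2+1)\times(v-k)}$, $A\mapsto XA$, is injective because $X(A-A')=0$ forces $\rk(A-A')\le d/2-1$, hence bijective since $\#\mathcal{A}=q^{(v-k)(k-d/2+1)}$; thus every $(k-d/2+1)$-space meeting $S$ trivially lies in some codeword of $\mathcal{M}$, and a $U$ with $\dim(U\cap S)<d/2$ would contain such a space and violate the minimum distance (linearity of $\mathcal{A}$ is not needed). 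The first bullet then goes through exactly as you describe, with $k\ge 3$ guaranteeing injectivity of $\Phi$.

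The genuine gap is the one you half-acknowledge at the end, and it is not merely a matter of ``careful handling'': the per-$T$ bound $(q^v-q^{v-k})/(q^k-q^{k/2})$ on $\#\{U\in\mathcal{C}_2:T\subseteq U\}$ does not follow from your disjoint-points argument. A codeword $U\ni T$ with $j_U:=\dim(U\cap S)=k-1$ has only $(q^k-q^{k-1})/(q-1)$ points outside $S$, so disjointness of these point sets yields the denominator $q^k-q^{k-1}$ rather than $q^k-q^{k/2}$; even adding the second disjointness constraint inside $S\setminus T$ (the subspaces $U\cap S$ also pairwise meet in $T$) only improves the per-$T$ count to $(q^v-q^{k/2})/(q^k-q^{k/2})$, still short of the stated term. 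The repair is to exploit the over-counting rather than discard it. For each $T$ one has $\sum_{U\ni T}\bigl(q^k-q^{j_U}\bigr)\le q^v-q^{v-k}$, and summing over all $T$ gives $\sum_{U\in\mathcal{C}_2}\gaussmnum{j_U}{k/2}{q}\bigl(q^k-q^{j_U}\bigr)\le\gaussmnum{v-k}{k/2}{q}\bigl(q^v-q^{v-k}\bigr)$. The middle term then follows from the elementary inequality $\gaussmnum{j}{k/2}{q}\bigl(q^k-q^{j}\bigr)\ge q^k-q^{k/2}$ for $k/2\le j\le k-1$: there is equality at $j=k/2$, while for $j>k/2$ one has $(q^k-q^{k/2})/(q^k-q^j)<q/(q-1)\le 2<\gaussmnum{j}{k/2}{q}$. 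Without this weighted double count the middle term of the second bullet is not established.
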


\subsection{Upper bounds for partial spreads}
\label{subsec_bounds_partial_spreads}
The case of constant dimension codes with maximum possible subspace distance $d=2k$ is known under the name partial spreads. Counting points,
i.e., $1$-dimensional subspaces, in $\F_q^v$ and $\F_q^k$ gives the obvious upper bound
$A_q(v,2k;k)\le \gaussmnum{v}{1}{q}/\gaussmnum{k}{1}{q}=\left(q^v-1\right)/\left(q^k-1\right)$. In the case of
equality one speaks of spreads, for which a handy existence criterion is known from the work of Segre in 1964.
\begin{theorem}{\cite[\S VI]{segre1964teoria}}
  \label{thm_spread} $\mathbb{F}_q^v$ contains a spread if and only if $k$ is a
  divisor of $v$.
\end{theorem}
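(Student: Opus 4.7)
The plan is to prove the two directions separately, using a counting argument for necessity and an explicit algebraic construction for sufficiency.

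For the ``only if'' direction, I would start from the fact that a spread of $\mathbb{F}_q^v$ by $k$-dimensional subspaces is a partial spread of maximum possible size, so $\#\mathcal{S}=\gaussmnum{v}{1}{q}/\gaussmnum{k}{1}{q}=(q^v-1)/(q^k-1)$. Geometrically, the condition of pairwise trivial intersection means the spread elements partition the set of nonzero vectors of $\mathbb{F}_q^v$, each contributing exactly $q^k-1$ nonzero vectors; hence $(q^k-1)$ must divide $(q^v-1)$. Writing $v=ak+r$ with $0\le r<k$ and using $q^v-1\equiv q^r-1\pmod{q^k-1}$ together with the strict inequality $q^r-1<q^k-1$, I would conclude $r=0$, i.e., $k\mid v$.

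For the ``if'' direction, assuming $k\mid v$, I would identify $\mathbb{F}_q^v$ with $\mathbb{F}_{q^k}^{v/k}$ via an $\mathbb{F}_q$-linear isomorphism (available because $\mathbb{F}_{q^k}$ is a $k$-dimensional $\mathbb{F}_q$-vector space, so $\mathbb{F}_{q^k}^{v/k}$ becomes a $v$-dimensional $\mathbb{F}_q$-space). Then I would take $\mathcal{S}$ to be the collection of all $1$-dimensional $\mathbb{F}_{q^k}$-subspaces of $\mathbb{F}_{q^k}^{v/k}$, each of which is a $k$-dimensional $\mathbb{F}_q$-subspace. Two distinct $\mathbb{F}_{q^k}$-lines meet only in $0$, so the elements of $\mathcal{S}$ pairwise intersect trivially as $\mathbb{F}_q$-subspaces, and their number is $(q^{k\cdot v/k}-1)/(q^k-1)=(q^v-1)/(q^k-1)$, exactly the bound, confirming $\mathcal{S}$ is a spread.

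No step is really an obstacle: the only thing requiring care is the number-theoretic step $(q^k-1)\mid(q^v-1)\Longleftrightarrow k\mid v$, which reduces to a short division-with-remainder argument as above. The construction direction is standard ``desarguesian spread'' folklore and requires only that one spell out the change of scalars.
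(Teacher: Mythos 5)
Your proof is correct and complete: the necessity follows from the integrality of $(q^v-1)/(q^k-1)$ (equivalently, from the partition of the nonzero vectors into classes of size $q^k-1$) together with the division-with-remainder argument showing $(q^k-1)\mid(q^v-1)$ iff $k\mid v$, and the sufficiency is the standard Desarguesian spread obtained by field reduction from $\mathbb{F}_{q^k}^{v/k}$. The paper itself gives no proof of this statement---it is quoted from Segre's 1964 work as an external result---so there is nothing to compare against; your argument is the usual textbook proof and both directions are sound as written.
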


If $k$ is not a divisor of $v$, far better bounds are known including some recent improvements, which we will briefly summarize.
For a more detailed treatment we refer to e.g.\ \cite{honold2016partial}. The best known parametric construction was given by
Beutelspacher in 1975:
\begin{theorem}\cite{beutelspacher1975partial}
  \label{thm:multicomponent}
  For positive integers $v,k$ satisfying $v=tk+r$, $t\geq 2$ and
  $1\leq r\leq k-1$ we have
  $A_q(v,2k;k)\geq 1+\sum_{i=1}^{t-1}q^{ik+r}=\frac{q^v-q^{k+r}+q^k-1}{q^k-1}$ with equality for $r=1$.
\end{theorem}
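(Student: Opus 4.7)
The plan is to prove the lower bound via an inductive graph construction built from a lifted MRD code, and to derive the equality case $r=1$ from a matching classical upper bound.

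For the lower bound I would induct on $t$. The base $t=1$ gives $v=k+r$, and a single $k$-dimensional subspace provides a partial spread of size $1=1+\sum_{i=1}^{0}q^{ik+r}$. For the step from $t-1$ to $t$, take $V=\mathbb{F}_q^{tk+r}$ and fix a decomposition $V=V'\oplus U$ with $\dim V'=(t-1)k+r$ and $\dim U=k$. By the inductive hypothesis there is a partial spread $\mathcal{S}'\subseteq\gaussmset{V'}{k}$ of size $1+\sum_{i=1}^{t-2}q^{ik+r}$ inside $V'$. For each linear map $f\colon U\to V'$ the \emph{graph}
\[
\Gamma_f:=\{(f(u),u)\mid u\in U\}
\]
is a $k$-dimensional subspace of $V$ with $\Gamma_f\cap V'=\{0\}$, so automatically $\Gamma_f\cap S=\{0\}$ for every $S\in\mathcal{S}'$. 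Moreover $\Gamma_f\cap\Gamma_{f'}=\{(0,u)\mid u\in\ker(f-f')\}$, so the graphs $\{\Gamma_f\mid f\in\mathcal{M}\}$ are pairwise trivially intersecting iff every nonzero difference of maps in $\mathcal{M}$ is injective. Identifying $\mathrm{Hom}(U,V')$ with $\mathbb{F}_q^{((t-1)k+r)\times k}$, this is precisely the condition that $\mathcal{M}$ is a rank metric code with minimum rank distance $k$; by Theorem~\ref{thm_MRD_size} such an MRD code $\mathcal{M}$ of cardinality $q^{((t-1)k+r)(k-k+1)}=q^{(t-1)k+r}$ exists. Adjoining $\{\Gamma_f\mid f\in\mathcal{M}\}$ to $\mathcal{S}'$ yields a partial spread of size $1+\sum_{i=1}^{t-1}q^{ik+r}$, which coincides with $(q^v-q^{k+r}+q^k-1)/(q^k-1)$ by summing the geometric series.

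For the equality statement I would invoke the classical Beutelspacher upper bound $A_q(tk+1,2k;k)\le 1+q(q^{tk}-q^k)/(q^k-1)$, proved by counting the points of $V$ not covered by any codeword (the so-called holes) and using that for $r=1$ the resulting hole structure forces the claimed inequality. The same bound also follows from the sharper partial-spread upper bounds reviewed in Subsection~\ref{subsec_bounds_partial_spreads}. The main technical obstacle is the bookkeeping in the inductive step, namely checking simultaneously (i) that each $\Gamma_f$ meets every $S\in\mathcal{S}'$ trivially, which reduces to $\Gamma_f$ being complementary to $V'$, and (ii) that any two graphs in $\{\Gamma_f\mid f\in\mathcal{M}\}$ intersect trivially, which is exactly encoded as a minimum rank distance condition on $\mathcal{M}$; linking these two properties to the MRD existence statement is the heart of the argument.
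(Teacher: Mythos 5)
The paper does not prove this theorem at all -- it is quoted verbatim from Beutelspacher's 1975 paper -- so there is no internal proof to compare against. Your construction is correct and is exactly the recursive lifted-MRD (``multicomponent'') construction that underlies the cited result: the verification that $\Gamma_f\cap V'=\{0\}$ handles the intersections with $\mathcal{S}'$, the identity $\Gamma_f\cap\Gamma_{f'}=\{(f(u),u)\mid u\in\ker(f-f')\}$ correctly translates pairwise trivial intersection into a minimum rank distance $k$ condition, and Theorem~\ref{thm_MRD_size} (with $m=(t-1)k+r>k=n=d$, so the exponent is $(t-1)k+r$) together with the existence of MRD codes gives exactly $q^{(t-1)k+r}$ new subspaces per step; the geometric-series identity checks out. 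The one soft spot is the equality claim for $r=1$: you invoke a classical upper bound with only a gesture toward its hole-counting proof, so as written that half is a citation rather than an argument. Within this paper the cleanest way to close it is to note that for $r=1$ one has $k\ge 2>\gaussmnum{1}{1}{q}=1$, so Theorem~\ref{thm_partial_spread_asymptotic} applies and yields $A_q(tk+1,2k;k)=1+\sum_{i=1}^{t-1}q^{ik+1}$, matching your lower bound; be aware that the Drake--Freeman bound (Theorem~\ref{thm_partial_spread_4}) alone does \emph{not} suffice here for $q\ge 3$, since there $\lfloor\theta\rfloor=\lfloor(q-2)/2\rfloor$-ish falls short of the needed $q-2$.
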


The determination of $A_2(v,6;3)$ for $v\equiv 2\pmod 3$ was achieved more than 30 years later in \cite{spreadsk3} and
continued to $A_2(v,2k;k)$ for $v\equiv 2\pmod k$ and arbitrary $k$ in \cite{kurzspreads}. Besides the parameters of
$A_2(8+3l,6;3)$, for $l\ge 0$, see \cite{spreadsk3} for an example showing $A_2(8,6;3)\ge 34$, no partial spreads exceeding the
lower bound from Theorem~\ref{thm:multicomponent} are known.

For a long time the best known upper bound on $A_q(v,2k;k)$ was the one obtained by Drake and
Freeman in 1979:
\begin{theorem}\cite[Corollary~8]{nets_and_spreads}
  \label{thm_partial_spread_4}
  If $v=kt+r$ with $0<r<k$, then
  \[
    A_q(v,2k;k)\le \sum_{i=0}^{t-1} q^{ik+r} -\left\lfloor\theta\right\rfloor-1
    =q^r\cdot \frac{q^{kt}-1}{q^k-1}-\left\lfloor\theta\right\rfloor-1,
  \]
  where $2\theta=\sqrt{1+4q^k(q^k-q^r)}-(2q^k-2q^r+1)$.
\end{theorem}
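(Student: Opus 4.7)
The approach is a hyperplane double-counting argument combined with a convexity-based integrality inequality. Let $\mathcal{C}$ be a partial $k$-spread in $V = \Fq^v$ of size $n$, and write $n = \sigma - s$ where $\sigma = q^r(q^{kt}-1)/(q^k-1)$ is the trivial upper bound from point counting; the goal is to show that the deficiency $s$ satisfies $s \ge \lfloor\theta\rfloor + 1$.

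For each hyperplane $H$ of $V$, let $b_H = \#\{U \in \mathcal{C} : U \subseteq H\}$. A codeword not contained in $H$ meets $H$ in a $(k-1)$-subspace, so the number $h_H$ of points of $H$ covered by no codeword satisfies
\[
h_H = \gaussmnum{v-1}{1}{q} - n\gaussmnum{k-1}{1}{q} - b_H\cdot q^{k-1} \ge 0,
\]
which forces the integer $b_H$ into $\{0,1,\dots,\lfloor B\rfloor\}$ with $B := \bigl[\gaussmnum{v-1}{1}{q} - n\gaussmnum{k-1}{1}{q}\bigr]/q^{k-1}$. Standard incidence counting gives the moments
\[
\sum_H b_H = n\gaussmnum{v-k}{1}{q}, \qquad \sum_H b_H(b_H-1) = n(n-1)\gaussmnum{v-2k}{1}{q},
\]
using that a fixed $k$-subspace lies in $\gaussmnum{v-k}{1}{q}$ hyperplanes and that two disjoint $k$-subspaces (spanning a $2k$-subspace) lie jointly in $\gaussmnum{v-2k}{1}{q}$ hyperplanes.

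Summing the pointwise convexity inequality $b_H(\lfloor B\rfloor - b_H) \ge 0$ over all hyperplanes yields
\[
(n-1)\gaussmnum{v-2k}{1}{q} \le (\lfloor B\rfloor - 1)\gaussmnum{v-k}{1}{q}.
\]
Substituting $n = \sigma - s$ together with the explicit form of $\lfloor B\rfloor$ as a function of $s$, and invoking the key identity $\sigma(q^k - 1) = q^v - q^r$ together with the factorization $q^{v-k} - 1 = q^k(q^{v-2k}-1) + (q^k-1)$, this rearranges to the quadratic
\[
s\bigl(s + 2q^k - 2q^r + 1\bigr) \ge (q^r - 1)(q^k - q^r).
\]
Squaring the defining formula $2\theta = \sqrt{1+4q^k(q^k-q^r)} - (2q^k-2q^r+1)$ yields the companion identity $\theta(\theta + 2q^k-2q^r+1) = (q^r-1)(q^k-q^r)$, so the inequality above is equivalent to $s \ge \theta$, and integrality of $s$ forces $s \ge \lfloor\theta\rfloor + 1$, which is the claimed bound.

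The main obstacle is the final algebraic reduction: since $\lfloor B\rfloor$ depends on $s$ in a piecewise fashion (via a residue modulo $q^{k-1}$ that is locally constant but may jump with $s$), pinning down that the resulting inequality matches the defining equation of $\theta$ requires careful bookkeeping of these discrete jumps and the use of nontrivial $q$-binomial identities. The combinatorial content is concentrated in the moment computation and the convexity step; the conversion into the precise $\theta$-form is routine but delicate, and in fact for certain parameter ranges the second-moment step must be supplemented by higher-order convexity inequalities of the form $\sum_H\prod_{i=0}^{j-1}(h_H - \rho - i\,q^{k-1}) \ge 0$ (using the arithmetic-progression structure $h_H \in \{\rho, \rho+q^{k-1}, \rho+2q^{k-1},\ldots\}$) to match the full strength of the Drake-Freeman quadratic.
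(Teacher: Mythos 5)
The paper offers no proof of this theorem; it is cited verbatim from Drake and Freeman, so your attempt has to stand on its own. Your setup is the standard and correct one: the moments $\sum_H b_H = n\gaussmnum{v-k}{1}{q}$ and $\sum_H b_H(b_H-1)=n(n-1)\gaussmnum{v-2k}{1}{q}$ are right (the latter because distinct codewords of a partial spread span a $2k$-space), as is the constraint $0\le b_H\le\lfloor B\rfloor$. The gap is in the convexity step. The inequality $\sum_H b_H(\lfloor B\rfloor-b_H)\ge 0$ is valid but strictly too weak, and it does \emph{not} rearrange to $s(s+2q^k-2q^r+1)\ge(q^r-1)(q^k-q^r)$. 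Concretely, take $q=2$, $k=3$, $v=8$ (so $t=r=2$, $\sigma=36$) and suppose $n=35$, i.e.\ $s=1$. There are $255$ hyperplanes, $\sum_H b_H=35\cdot 31=1085$, $\sum_H b_H(b_H-1)=35\cdot 34\cdot 3=3570$, and $B=(127-105)/4=5.5$. Your displayed inequality becomes $(n-1)\gaussmnum{v-2k}{1}{q}=102\le 124=(\lfloor B\rfloor-1)\gaussmnum{v-k}{1}{q}$, equivalently $5\cdot 1085-(3570+1085)=770\ge 0$: no contradiction. Yet Drake--Freeman gives $A_2(8,6;3)\le 34$, so $s=1$ must be excluded; your chain of inequalities cannot do it.

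The missing idea is the Bose--Bush trick: because the $b_H$ are integers, $\sum_H(b_H-w)(b_H-w-1)\ge 0$ for \emph{every} integer $w$ (a product of two consecutive integers is nonnegative), and one must optimize over $w$, choosing it near the mean $\sum_H b_H/\gaussmnum{v}{1}{q}$. In the example, $w=4$ gives $4655-9\cdot 1085+20\cdot 255=-10<0$, the desired contradiction; carrying this out in general (equivalently, $\sum_H(h_H-u)(h_H-u-q^{k-1})\ge 0$ with the anchor $u$ chosen freely in the residue class of $h_H$ modulo $q^{k-1}$) is what produces the quadratic in $s$ whose positive root is $\theta$, and hence $s\ge\lfloor\theta\rfloor+1$. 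You mention inequalities of exactly this shape only in your closing sentence, as an optional supplement ``for certain parameter ranges'' anchored at the minimal residue $\rho$; in fact the freely anchored two-term product inequality is the engine of the entire bound for all parameters, the endpoint inequality $b_H(\lfloor B\rfloor-b_H)\ge 0$ plays no role, and no higher-order products are needed. As written, the proof has a genuine gap at its central step.
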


Quite recently this bound has been generalized to:
\newcommand{\uu}{\lambda}
\begin{theorem} \cite[Theorem~2.10]{kurz2017packing}
  \label{theorem_ps_bound_2}
  For integers $r\ge 1$, $t\ge 2$, $y\ge \max\{r,2\}$,
  $z\ge 0$ with $\uu=q^{y}$, $y\le k$,
  $k=\gaussmnum{r}{1}{q}+1-z>r$, $v=kt+r$, and  $l=\frac{q^{v-k}-q^r}{q^k-1}$, we have $A_q(v,2k;k)\le $
  $
     lq^k+\left\lceil \uu -\frac{1}{2}-\frac{1}{2}
    \sqrt{1+4\uu\left(\uu-(z+y-1)(q-1)-1\right)} \right\rceil
  $.
\end{theorem}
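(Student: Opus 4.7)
The approach is a ``holes in hyperplanes'' double-count, continuing the classical Drake--Freeman strategy behind Theorem~\ref{thm_partial_spread_4}. Let $\mathcal{C}\subseteq\Gqnk$ be a partial $k$-spread of size $n=A_q(v,2k;k)$ and write $n=lq^k+s$, so the task becomes bounding $s$ from above. Call a point of $V$ a \emph{hole} if it lies in no codeword, and let $\mathcal{H}$ be the set of holes. Since distinct codewords meet trivially, $|\mathcal{H}|=\gaussmnum{v}{1}{q}-n\gaussmnum{k}{1}{q}$; using $l(q^k-1)=q^{v-k}-q^r$ together with the identity $\gaussmnum{k+r}{1}{q}=q^r\gaussmnum{k}{1}{q}+\gaussmnum{r}{1}{q}$ this collapses to $|\mathcal{H}|=(q^r-s)\gaussmnum{k}{1}{q}+\gaussmnum{r}{1}{q}$. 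The trivial constraint $|\mathcal{H}|\ge 0$ already gives $s\le q^r\le\lambda$, which later selects the correct branch of a quadratic.

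Next I set up the hyperplane double count. For each hyperplane $H\le V$ let $a_H$ denote the number of codewords of $\mathcal{C}$ contained in $H$; the other $n-a_H$ codewords meet $H$ in a $(k-1)$-dimensional subspace, and these pieces partition the covered points of $H$, so
\begin{equation*}
h_H:=|\mathcal{H}\cap H|=\gaussmnum{v-1}{1}{q}-n\gaussmnum{k-1}{1}{q}-a_Hq^{k-1}.
\end{equation*}
Counting incidences (codeword, hyperplane), (ordered pair of codewords, hyperplane), and (hole, hyperplane) yields the three master identities $\sum_H a_H=n\gaussmnum{v-k}{1}{q}$, $\sum_H a_H(a_H-1)=n(n-1)\gaussmnum{v-2k}{1}{q}$, and $\sum_H h_H=|\mathcal{H}|\gaussmnum{v-1}{1}{q}$, which together control the first and second moments of the distribution $(a_H)_H$, equivalently $(h_H)_H$.

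The heart of the argument, and the step I expect to be the main obstacle, is a minihyper / $q^{k-1}$-divisibility lower bound: whenever $h_H>0$ one must in fact have $h_H\ge\tau$ for an explicit threshold $\tau$ depending on $\lambda=q^y$ and on $(z+y-1)(q-1)$. A short computation from the displayed formula for $h_H$ shows $h_H\equiv(1-s)\gaussmnum{k-1}{1}{q}\pmod{q^{k-1}}$, so $\mathcal{H}$ is a $q^{k-1}$-divisible point set, and the assumptions $k=\gaussmnum{r}{1}{q}+1-z$, $k>r$, and $y\ge\max\{r,2\}$ are chosen precisely to push the smallest positive admissible value of $h_H$ up to $\tau$ via a Beutelspacher/Bonisoli-type result on minimal non-trivial minihypers. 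Pinning down the correct $\tau$ with its precise parametric dependence on $y$ and $z$ is the delicate combinatorial step.

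Feeding $h_H\in\{0\}\cup[\tau,\infty)$ back into the summation identities, eliminating $a_H$ via its linear relation to $h_H$, and substituting $n=lq^k+s$ converts the three identities and the threshold inequality into a quadratic in $s$ of the shape
\begin{equation*}
s^2-(2\lambda-1)s+\lambda(z+y-1)(q-1)\ge 0.
\end{equation*}
Since $s\le\lambda$ by the first step and the larger root of this quadratic exceeds $\lambda$, only the small-root branch is admissible, giving $s\le\lambda-\tfrac12-\tfrac12\sqrt{1+4\lambda(\lambda-(z+y-1)(q-1)-1)}$; rounding up because $s$ is an integer yields exactly the bound in the statement.
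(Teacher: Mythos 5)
First, a caveat: the paper does not actually prove Theorem~\ref{theorem_ps_bound_2}; it is quoted verbatim from \cite{kurz2017packing}, so there is no in-paper proof to compare against. Judged on its own terms, your frame is the right one and matches the standard route behind all of Theorems~\ref{thm_partial_spread_4}--\ref{theorem_ps_bound_1}: the hole count $|\mathcal{H}|=(q^r-s)\gaussmnum{k}{1}{q}+\gaussmnum{r}{1}{q}$, the relation $h_H=\gaussmnum{v-1}{1}{q}-n\gaussmnum{k-1}{1}{q}-a_Hq^{k-1}$, the three incidence identities, the congruence $h_H\equiv(1-s)\gaussmnum{k-1}{1}{q}\pmod{q^{k-1}}$, and the target quadratic $s^2-(2\lambda-1)s+\lambda(z+y-1)(q-1)\ge 0$ are all correct.

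However, the heart of the argument is missing, and the mechanism you nominate for it is not the one that works. You posit a minihyper-type threshold ``$h_H>0\Rightarrow h_H\ge\tau$'' and explicitly defer identifying $\tau$; that deferred step is the entire content of the theorem, so as written the proof does not close. Moreover, a threshold anchored at $0$ cannot by itself produce this bound: the statement is a one-parameter \emph{family} of bounds, one for each admissible $y$ with $r\le y\le k$, so $\lambda=q^y$ must enter as a tunable choice made inside the argument rather than as a single geometric constant forced by the configuration. The engine actually used (this is the ``linear programming method applied to projective $q^{k-1}$-divisible codes'' alluded to at the end of Subsection~\ref{subsec_bounds_partial_spreads}) is the elementary observation that, since all $h_H$ lie in one residue class modulo $q^{k-1}$, one has $\sum_H (h_H-m)(h_H-m-q^{k-1})\ge 0$ for \emph{every} representative $m$ of that class, because each summand equals $q^{2(k-1)}u(u-1)$ with $u\in\mathbb{Z}$. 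Expanding this with your three identities yields a quadratic inequality in $s$ whose coefficients depend on $m$; the hypotheses $y\le k$, $y\ge\max\{r,2\}$ and $k=\gaussmnum{r}{1}{q}+1-z$ are exactly what make a representative tied to $\lambda=q^y$ admissible and turn the constant term into $\lambda(z+y-1)(q-1)$. No lower bound on nonzero $h_H$ is invoked. Finally, your branch selection also relies on two unverified (though true under the hypotheses) facts: that the discriminant is nonnegative, i.e.\ $\lambda\ge(z+y-1)(q-1)+1$, and that the larger root strictly exceeds $q^r\ge s$; the latter is not automatic from ``the larger root exceeds $\lambda$'' alone, since $y=r$ is allowed and then $q^r=\lambda$.
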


The construction of Theorem~\ref{thm:multicomponent} is asymptotically optimal for
$k\gg r=v\bmod k$, as recently shown by N{\u{a}}stase and Sissokho:
\begin{theorem}\cite[Theorem~5]{nastase2016maximum}
  \label{thm_partial_spread_asymptotic}
  Suppose $v=tk+r$ with $t\geq 1$ and $0<r<k$.
  If $k>\gaussmnum{r}{1}{q}$
  then   $A_q(v,2k;k)=1+\sum_{i=1}^{t-1}q^{ik+r}=\frac{q^{v}-q^{k+r}+q^k-1}{q^k-1}$.
\end{theorem}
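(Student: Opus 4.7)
The plan is to match the Beutelspacher lower bound of Theorem~\ref{thm:multicomponent} from above. Let $\mathcal{P}$ be a partial $k$-spread in $\F_q^v$ of maximum size $n$ and write $n=1+\sum_{i=1}^{t-1}q^{ik+r}+\sigma$ with $\sigma\ge 0$. Denote by $\mathcal{H}\subseteq\F_q^v$ the set of points of $\F_q^v$ covered by no element of $\mathcal{P}$ (the ``holes''). Using the identity $\gaussmnum{v}{1}{q}=q^r\gaussmnum{k}{1}{q}\sum_{i=0}^{t-1}q^{ik}+\gaussmnum{r}{1}{q}$, the hole count simplifies to
\[
  |\mathcal{H}|=q^k\gaussmnum{r}{1}{q}-\sigma\gaussmnum{k}{1}{q},
\]
so the task reduces to showing $\sigma=0$ whenever $k>\gaussmnum{r}{1}{q}$.

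I would proceed by induction on $r$ with $k$ and $t$ fixed, using Theorem~\ref{thm_spread} as the base case $r=0$ (which gives $A_q(tk,2k;k)=\sum_{i=0}^{t-1}q^{ik}$). For $r\ge 1$, since $k>\gaussmnum{r}{1}{q}\ge\gaussmnum{r-1}{1}{q}$, the inductive hypothesis applied to $v-1=tk+(r-1)$ yields $A_q(v-1,2k;k)\le 1+\sum_{i=1}^{t-1}q^{ik+r-1}=:B$. Consequently $b_H\le B$ for every hyperplane $H$ of $\F_q^v$, where $b_H$ is the number of elements of $\mathcal{P}$ contained in $H$. Partitioning the points of $H$ by how $\mathcal{P}$ covers them gives
\[
  \gaussmnum{v-1}{1}{q}=b_H\gaussmnum{k}{1}{q}+(n-b_H)\gaussmnum{k-1}{1}{q}+h_H,
\]
where $h_H$ is the number of holes lying in $H$.

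The decisive step is a weighted double count over hyperplanes. The linear identities $\sum_H b_H=n\gaussmnum{v-k}{1}{q}$ and $\sum_H h_H=|\mathcal{H}|\gaussmnum{v-1}{1}{q}$ combined with $b_H\le B$ only yield the loose averaging bound $n\le B\gaussmnum{v}{1}{q}/\gaussmnum{v-k}{1}{q}$, so one must also track a quadratic sum such as $\sum_H b_H h_H$ or $\sum_H h_H^2$. The essential structural input is that every line through a hole meets each spread element in at most one point, which turns $\mathcal{H}$ into a tightly constrained blocking configuration on hyperplanes. After substituting the formula for $|\mathcal{H}|$ and the inductive bound $B$, the resulting inequality must be manipulated so that the factor $k-\gaussmnum{r}{1}{q}$ appears with a definite sign multiplying $\sigma$; the hypothesis $k>\gaussmnum{r}{1}{q}$ then forces $\sigma\le 0$. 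The main obstacle is precisely this arithmetic reduction: setting up the correct quadratic counting identity whose simplification isolates the sharp threshold $k=\gaussmnum{r}{1}{q}$ of the theorem, which is exactly what separates this result from the strictly weaker Drake--Freeman estimate of Theorem~\ref{thm_partial_spread_4}.
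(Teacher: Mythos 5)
First, a point of reference: the paper does not prove this theorem at all --- it is quoted verbatim from N{\u{a}}stase and Sissokho \cite[Theorem~5]{nastase2016maximum} --- so there is no in-paper proof to compare against, and your attempt has to stand on its own.

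Your setup is sound as far as it goes: the hole count $|\mathcal{H}|=q^k\gaussmnum{r}{1}{q}-\sigma\gaussmnum{k}{1}{q}$ is correct, the hyperplane identity $\gaussmnum{v-1}{1}{q}=b_H\gaussmnum{k}{1}{q}+(n-b_H)\gaussmnum{k-1}{1}{q}+h_H$ is correct, and you correctly diagnose that the first-moment identities together with $b_H\le B$ reproduce only the averaging (Johnson-type) bound, which for these parameters is the trivial estimate and cannot isolate the threshold $k>\gaussmnum{r}{1}{q}$. But the proof stops exactly where it would have to begin. The entire content of the theorem lives in the ``decisive step'' that you describe in the conditional mood (``one must also track a quadratic sum\dots the resulting inequality must be manipulated so that the factor $k-\gaussmnum{r}{1}{q}$ appears with a definite sign''): no second-moment identity is written down, no inequality is derived, and no computation is performed that actually produces the factor $k-\gaussmnum{r}{1}{q}$. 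Asserting that the arithmetic ``must'' work out is a statement of intent, not an argument, and there is no a priori reason the naive combination of $\sum_H b_Hh_H$ or $\sum_H h_H^2$ with your inductive bound $b_H\le B$ yields the sharp threshold rather than another non-optimal bound of Drake--Freeman type. Concretely, the missing engine is the congruence hidden in your own identity: rewriting it as $h_H=\gaussmnum{v-1}{1}{q}-n\gaussmnum{k-1}{1}{q}-b_Hq^{k-1}$ shows that $h_H$ is \emph{constant modulo} $q^{k-1}$ over all hyperplanes, i.e.\ the hole set is a $q^{k-1}$-divisible point set (this is precisely the connection to projective $q^{k-1}$-divisible codes mentioned after Theorem~\ref{theorem_ps_bound_1}). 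All known proofs --- the original inductive one of N{\u{a}}stase--Sissokho and the divisible-code proofs behind Theorems~\ref{theorem_ps_bound_1} and \ref{theorem_ps_bound_2} --- obtain the conclusion by combining this congruence with a quadratic inequality of the form $\sum_H(h_H-a)(h_H-a-q^{k-1})\ge 0$ and then showing that no admissible residue class is compatible with $|\mathcal{H}|=q^k\gaussmnum{r}{1}{q}-\sigma\gaussmnum{k}{1}{q}$ for $\sigma\ge 1$ when $k>\gaussmnum{r}{1}{q}$. You never invoke the congruence, so even the correct pieces you have assembled cannot be closed into a proof; the line-through-a-hole observation and the induction on $r$ are, in the form stated, not used for anything beyond the weak averaging bound you already reject.
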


Applying similar techniques, the result was generalized to $k\le \gaussmnum{r}{1}{q}$:
\begin{theorem}
  \label{theorem_ps_bound_1} \cite[Theorem~2.9]{kurz2017packing}
  For integers $r\ge 1$, $t\ge 2$, $u\ge 0$, and $0\le z\le \gaussmnum{r}{1}{q}/2$ with $k=\gaussmnum{r}{1}{q}+1-z+u>r$ we have
  $A_q(v,2k;k)\le lq^k+1+z(q-1)$, where $l=\frac{q^{v-k}-q^r}{q^k-1}$ and $v=kt+r$.
\end{theorem}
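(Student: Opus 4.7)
The plan is to argue by contradiction following the hole–and–hyperplane double count introduced by Năstase and Sissokho \cite{nastase2016maximum}, tightened so that the additive slack $z(q-1)$ survives the bookkeeping. Suppose, for contradiction, that $\mathcal{P}\subseteq\gaussmset{\mathbb{F}_q^v}{k}$ is a partial $k$-spread of cardinality $n=lq^k+s$ with $s\ge 2+z(q-1)$, and let $N$ denote the set of points of $V$ covered by no element of $\mathcal{P}$. Using the identity $l(q^k-1)=q^{v-k}-q^r$ one verifies
\[
  |N|=\gaussmnum{v}{1}{q}-n\gaussmnum{k}{1}{q}=\gaussmnum{k+r}{1}{q}-s\gaussmnum{k}{1}{q}.
\]

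For an arbitrary hyperplane $H\le V$, set $B_H:=\#\{U\in\mathcal{P}\mid U\subseteq H\}$. Every $U\in\mathcal{P}$ not contained in $H$ meets $H$ in a $(k-1)$-subspace, so a direct point count inside $H$ gives
\[
  |N\cap H|=\gaussmnum{v-1}{1}{q}-B_H\gaussmnum{k}{1}{q}-(n-B_H)\gaussmnum{k-1}{1}{q}=a-B_Hq^{k-1},
\]
where $a:=\gaussmnum{v-1}{1}{q}-n\gaussmnum{k-1}{1}{q}$ is independent of $H$. In particular $|N\cap H|\equiv a\pmod{q^{k-1}}$ for every hyperplane. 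Next I would feed this into the two double-counting identities $\sum_H B_H=n\gaussmnum{v-k}{1}{q}$ and $\sum_H|N\cap H|=|N|\gaussmnum{v-1}{1}{q}$, fixing the averages of $B_H$ and $|N\cap H|$ as explicit functions of the parameters.

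The heart of the argument is the extraction of a hyperplane $H^\star$ on which $B_{H^\star}$ is extremal; nonnegativity $|N\cap H|\ge 0$ together with the congruence modulo $q^{k-1}$ restricts the feasible values of $B_H$ to a narrow residue class, and comparison with the average then forces $B_{H^\star}$ to be atypically large. Inside $H^\star$, the $B_{H^\star}$ elements of $\mathcal{P}$ contained in $H^\star$ together with the $n-B_{H^\star}$ traces of the remaining elements form a point-disjoint family of subspaces of dimensions $k$ and $k-1$, and a second point count turns this into a quadratic inequality in $s$ whose positive solutions are bounded by $1+z(q-1)$, contradicting $s\ge 2+z(q-1)$.

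The principal obstacle is the final calibration of the quadratic inequality. The hypothesis $k=\gaussmnum{r}{1}{q}+1-z+u>r$ controls how tightly the modular constraint on $|N\cap H|$ pins $B_H$ down, while the restriction $0\le z\le\gaussmnum{r}{1}{q}/2$ is exactly what is needed to keep the leading coefficient of the quadratic in $s$ of the correct sign and to prevent the bound from being spoilt by rounding. Specialising to $z=0$ recovers Theorem~\ref{thm_partial_spread_asymptotic}; for $z>0$ the extra term $z(q-1)$ in the bound will appear precisely as the amount by which $B_{H^\star}$ is allowed to deviate from its ``unit'' value while the modular obstruction continues to rule out larger partial spreads.
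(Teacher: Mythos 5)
The paper itself does not prove this theorem: it is imported verbatim as \cite[Theorem~2.9]{kurz2017packing}, so there is no in-paper argument to compare against; the relevant benchmark is the proof in the cited source, which works with the $q^{k-1}$-divisibility of the hole set and bounds on the minimum size of such divisible point sets.

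Your proposal correctly sets up the standard front end of that machinery: the hole count $|N|=\gaussmnum{k+r}{1}{q}-s\gaussmnum{k}{1}{q}$, the identity $|N\cap H|=a-B_Hq^{k-1}$, and the resulting congruence $|N\cap H|\equiv |N|\pmod{q^{k-1}}$ are all right. But the proof stops exactly where the theorem actually lives. The decisive step --- producing a quadratic inequality in $s$ whose admissible solutions are $s\le 1+z(q-1)$ --- is announced but never derived: a single point count inside the extremal hyperplane $H^\star$ only reproduces the linear relation you already wrote down, and to get anything quadratic you need either the second moment $\sum_H |N\cap H|^2$ (equivalently, counting pairs of holes per hyperplane) or an iterated descent through a chain of subspaces as in N{\u{a}}stase--Sissokho; you specify neither. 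Likewise, the claims that the hypothesis $k=\gaussmnum{r}{1}{q}+1-z+u>r$ ``controls how tightly the modular constraint pins $B_H$ down'' and that $z\le\gaussmnum{r}{1}{q}/2$ ``keeps the leading coefficient of the correct sign'' are assertions about a computation you have not performed; since the entire content of the theorem is that these hypotheses produce precisely the slack $z(q-1)$, asserting this is circular. There is also a directional wobble in the extraction of $H^\star$: nonnegativity of $|N\cap H|$ gives an \emph{upper} bound on $B_H$, and you do not explain how combining it with the average forces an atypically \emph{large} $B_{H^\star}$ rather than merely a feasible one. As written, this is a plausible road map for a known strategy, not a proof.
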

Using Theorem~\ref{theorem_ps_bound_2} the restriction $z\le \gaussmnum{r}{1}{q}/2$ can be removed from Theorem~\ref{theorem_ps_bound_1},
see \cite{honold2016partial}.

Currently, Theorem~\ref{thm_spread}, Theorem~\ref{theorem_ps_bound_2}, and Theorem~\ref{theorem_ps_bound_1} constitute the tightest parametric
bounds for $A_q(v,2k;k)$. The only known improvements, by exactly one in every case, are given by the $21$ specific bounds stated in
\cite{kurz2017packing}, which are based on the linear programming method applied to projective $q^{k-1}$-divisible linear error-correcting
codes over $\mathbb{F}_q$ with respect to the Hamming distance, see~\cite{honold2016partial}. As this connection seemed to be overlooked
before, it may not be improbable that more sophisticated methods from classical coding theory can improve further values, which then imply improved
upper bounds for constant dimension codes via the Johnson bound of Theorem~\ref{thm_johnson_II}.

\section{The linkage construction revisited}
\label{sec_linkage}

A very effective and widely applicable construction of constant dimension codes was stated by Gluesing-Luerssen and
Troha:
\begin{theorem}\cite[Theorem~2.3]{MR3543532}, cf.~\cite[Corollary~39]{silberstein2015error}
\label{thm_original_linkage}
Let $C_i$ be a $(v_i, N_i, d_i; k)_q$ constant dimension code for $i=1,2$ and let $C_r$ be a $(k \times v_2, N_r, d_r)_q$ linear
rank metric code. Then \[\{\tau^{-1}(\tau(U)\mid M) : U \in C_1, M \in C_r\} \cup \{\tau^{-1}(0_{k \times v_1}| \tau(W)) : W \in C_2\}\]
is a $(v_1+v_2, N_1 N_R + N_2, \min\{d_1,d_2,2d_r\}; k)_q$ constant dimension code.
\end{theorem}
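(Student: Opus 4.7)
The plan is to verify three properties in turn: well-definedness (each listed element is a genuine $k$-dimensional subspace of $\mathbb{F}_q^{v_1+v_2}$), the size, and the claimed minimum subspace distance. For well-definedness, observe that if $\tau(U)$ is a $k\times v_1$ matrix in rref with all $k$ pivots in the first $v_1$ columns, then appending arbitrary columns $M$ preserves rref: the pivots stay leading ones with zeros above and the appended columns are non-pivot. Hence $\tau^{-1}(\tau(U)\mid M)\in\Gqnk$. Analogously, $(0_{k\times v_1}\mid \tau(W))$ is in rref with its pivots shifted into the last $v_2$ columns, so both families consist of $k$-dimensional subspaces of $\mathbb{F}_q^{v_1+v_2}$.

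For the size, note that first-family codewords have pivots in the first $v_1$ columns while second-family ones have all pivots in the last $v_2$ columns, so the two families are disjoint. Within the first family, distinct pairs $(U,M)$ yield distinct rref matrices (the first $v_1$ columns recover $\tau(U)$, hence $U$, and then the last $v_2$ columns recover $M$); within the second family, distinct $W$ yield distinct codewords. The total count is therefore $N_1N_r+N_2$.

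For the distance, I use the identity $\sdist(X,Y)=2(\rk\bigl(\begin{smallmatrix}\tau(X)\\ \tau(Y)\end{smallmatrix}\bigr)-k)$ from Equation~(\ref{eq_d_s_rk}) and split into three cases. \emph{Case 1: two first-family codewords $X_i=\tau^{-1}(\tau(U_i)\mid M_i)$.} If $U_1\ne U_2$, restricting to the first $v_1$ columns lower-bounds the stacked rank by $\rk\bigl(\begin{smallmatrix}\tau(U_1)\\ \tau(U_2)\end{smallmatrix}\bigr)\ge k+d_1/2$, so $\sdist\ge d_1$. If $U_1=U_2$ but $M_1\ne M_2$, row-reducing the bottom block by the top turns the stacked matrix into $\bigl(\begin{smallmatrix}\tau(U) & M_1\\ 0 & M_2-M_1\end{smallmatrix}\bigr)$; since $\tau(U)$ has full row rank within the first $v_1$ columns, the bottom rows are independent of the top rows, and the total rank is $k+\rk(M_2-M_1)\ge k+d_r$, hence $\sdist\ge 2d_r$. \emph{Case 2: two second-family codewords.} The leading zero block does not affect the rank, so the stacked rank equals $\rk\bigl(\begin{smallmatrix}\tau(W_1)\\ \tau(W_2)\end{smallmatrix}\bigr)\ge k+d_2/2$, giving $\sdist\ge d_2$. \emph{Case 3: one codeword from each family.} The top rows have full row rank $k$ in the first $v_1$ columns while the bottom rows vanish there, so the two $k$-dimensional row spaces intersect trivially, the stacked rank is $2k$, and $\sdist=2k\ge\min\{d_1,d_2,2d_r\}$ since $d_1,d_2\le 2k$ and $d_r\le k$.

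Combining the cases yields $\sdist\ge\min\{d_1,d_2,2d_r\}$ for every pair of distinct codewords, with equality attained by choosing codewords that realise the tightest bound in $C_1$, $C_2$, or $C_r$. The main technical point is Case~1 with $U_1=U_2$: one must argue that after row-reduction the block $(0\mid M_2-M_1)$ contributes exactly $\rk(M_2-M_1)$ additional independent rows, which hinges on $\tau(U)$ having full row rank in the first $v_1$ columns so that the two row spaces meet only in the zero vector.
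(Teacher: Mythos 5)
Your proof is correct. The paper does not prove Theorem~\ref{thm_original_linkage} itself (it is cited from the literature), but it proves the generalization Theorem~\ref{theo:improved_linkage}, and your argument is essentially the specialization of that proof: the same case split, the same use of Equation~(\ref{eq_d_s_rk}), restriction to the first $v_1$ columns when $U_1\ne U_2$, and the row-reduction to $\bigl(\begin{smallmatrix}\tau(U) & M_1\\ 0 & M_2-M_1\end{smallmatrix}\bigr)$ when $U_1=U_2$. The one place you genuinely diverge is the cross-family case: you compute the rank directly (the two row spaces meet trivially, so $\sdist=2k$), whereas the paper invokes Lemma~\ref{lemma_d_s_d_h} to bound $\sdist$ from below by the Hamming distance of the pivot vectors. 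For the original theorem both routes work equally well, since the pivot supports are disjoint and give $\dham=2k$; the pivot-vector argument is, however, exactly what allows the zero block to be shortened to $0_{k\times(v_1-k+d/2)}$ in Theorem~\ref{theo:improved_linkage}, where the supports are permitted to overlap in $k-d/2$ positions and a direct rank count of $2k$ is no longer available. Your closing remark correctly isolates the only delicate step (directness of the sum of the two row spaces when $U_1=U_2$), which the paper glosses over with an equality sign.
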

Here $A|B$ denotes the concatenation of two matrices with the same number of rows and $0_{m \times n}$ denotes the $m \times n$-matrix
consisting entirely of zeros. The resulting code depends on the choice of the codes $C_1$, $C_2$, $C_r$ and their representatives within isomorphism classes, so
that one typically obtains many isomorphism classes of codes with the same parameters.

We remark that \cite[Theorem~37]{silberstein2015error} corresponds to the weakened version of Theorem~\ref{thm_original_linkage}
where the codewords from the cdc $C_2$ are not taken into account, cf. \cite[Theorem~5.1]{gluesing2015cyclic}. In
\cite[Corollary~39]{silberstein2015error} Silberstein and (Horlemann-)Trautmann obtain the same lower bound, assuming $d_1=d_2=2d_r$,
which is indeed the optimal choice, and $3k \le v$.\footnote{\label{footnote_linkage}It can be verified that for $2k \le v \le 3k-1$ the optimal choice of
$\Delta$ in \cite[Corollary39]{silberstein2015error} is given by $\Delta=v-k$. In that case the construction is essentially the union
of an LMRD code with an $(v-k,\#\mathcal{C}',d;k)_q$ code $\mathcal{C}'$. Note that for $v-k < \Delta \le v$ the constructed code is
an embedded $(\Delta,\#\mathcal{C}',d;k)_q$ code $\mathcal{C}'$.}

The main idea behind Theorem~\ref{thm_original_linkage} is to consider two sets of codewords with
disjoint pivot vectors across the two sets and to utilize the interplay between the rank and the subspace distance for a product type
construction. Using Lemma~\ref{lemma_d_s_d_h} the restriction of the disjointness of the pivot vectors can be
weakened, which gives the following improvement:

\begin{theorem}\label{theo:improved_linkage}
Let $C_i$ be a $(v_i, N_i, d_i; k)_q$ constant dimension code for $i=1,2$, $d \in 2 \mathbb{N}_{\ge 0}$ and let $C_r$ be a
$(k \times (v_2-k+d/2), N_r, d_r)_q$ linear rank metric code. Then
\[\mathcal{C}=\{\tau^{-1}(\tau(U)\mid M) : U \in C_1, M \in C_r\} \cup \{\tau^{-1}(0_{k \times (v_1-k+d/2)}| \tau(W)) : W \in C_2\}\]
is a $(v_1+v_2-k+d/2, N_1 N_R + N_2, \min\{d_1,d_2,2d_r,d\}; k)_q$ constant dimension code.
\end{theorem}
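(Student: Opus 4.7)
The plan is to verify in turn that every codeword of $\mathcal{C}$ is a $k$-dimensional subspace of $\mathbb{F}_q^{v_1+v_2-k+d/2}$, that $\#\mathcal{C}=N_1N_r+N_2$, and that any two distinct codewords have subspace distance at least $\min\{d_1,d_2,2d_r,d\}$. For the first two points, appending arbitrary columns on the right of an rref matrix preserves the rref property, and prepending zero columns does as well; hence $\tau(U)\mid M$ and $0_{k\times(v_1-k+d/2)}\mid\tau(W)$ are rref forms of $k$-dimensional subspaces of the ambient space, and $\tau$ is injective on each component set, yielding $N_1N_r$ and $N_2$ distinct codewords respectively. The two sets are also disjoint: a Set~1 codeword has all $k$ pivots in columns $\le v_1$ whereas a Set~2 codeword has its $k$ pivots in columns $\ge v_1-k+d/2+1$, and the common window $\{v_1-k+d/2+1,\dots,v_1\}$ has only $k-d/2<k$ positions.

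For the distance I would use \eqref{eq_d_s_rk}. If two codewords both come from Set~1 with underlying subspaces $U_1\ne U_2$, then the rank of the $2k$-row matrix formed by stacking their representatives is at least the rank of its first $v_1$-column block, which equals $k+\sdist(U_1,U_2)/2\ge k+d_1/2$; hence the subspace distance is at least $d_1$. If $U_1=U_2$ but $M_1\ne M_2$, one elementary row operation turns the stacked matrix into $\tau(U)\mid M_1$ above $0\mid(M_2-M_1)$, whose rank is $k+\rk(M_2-M_1)\ge k+d_r$, giving distance at least $2d_r$. If both codewords come from Set~2, prepending the common zero block is a subspace isometry and the distance equals $\sdist(W_1,W_2)\ge d_2$.

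The new case is a cross pair $X_1=\tau^{-1}(\tau(U)\mid M)\in\text{Set~1}$ and $X_2=\tau^{-1}(0\mid\tau(W))\in\text{Set~2}$; here the pivot vectors may share support in the middle $k-d/2$ columns, which is exactly the relaxation over the original linkage. By Lemma~\ref{lemma_d_s_d_h} it suffices to show $\dham(p(X_1),p(X_2))\ge d$. Split the columns into three regions of sizes $v_1-k+d/2$, $k-d/2$, and $v_2-k+d/2$, and let $a,b$ denote the numbers of pivots of $X_1,X_2$ in the middle region. Then $p(X_1)$ puts $k-a$ ones in the left region where $p(X_2)$ vanishes, $p(X_2)$ puts $k-b$ ones in the right region where $p(X_1)$ vanishes, and the Hamming contribution in the middle is at least $|a-b|$; summing yields $(k-a)+(k-b)+|a-b|=2k-2\min(a,b)\ge d$, since $\min(a,b)\le k-d/2$. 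The main obstacle is precisely this cross case: the first three cases adapt directly from the original linkage proof, but here one must confirm that the relaxed overlap of pivot supports is compensated exactly by the size $k-d/2$ of the middle region, which the three-region Hamming bound above captures.
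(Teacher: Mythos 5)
Your proposal is correct and follows essentially the same route as the paper: Equation~(\ref{eq_d_s_rk}) and a rank argument for the two within-Set-1 subcases, the isometry of prepending zero columns for Set~2, and Lemma~\ref{lemma_d_s_d_h} applied to the pivot vectors for the cross case, where the overlap window of size $k-d/2$ yields $\dham\ge d$. Your three-region count $(k-a)+(k-b)+|a-b|=2k-2\min(a,b)\ge d$ is just a slightly more explicit version of the paper's estimate $k-(k-d/2)+k-(k-d/2)=d$, and your disjointness argument for the two sets is a small welcome elaboration of what the paper merely asserts.
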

\begin{proof}
  The dimension of the ambient space and the codewords of $\mathcal{C}$ directly follow from the construction. Since the constructed
  matrices all are in rref and pairwise distinct, $\mathcal{C}$ is well defined and we have $\#\mathcal{C}=N_1 N_R + N_2$. It remains to
  lower bound the minimum subspace distance of $\mathcal{C}$.

  Let $A,C\in C_1$ and $B,D\in C_r$. If $A\neq C$, we have
  \begin{align*}
  \sdist(\tau^{-1}((\tau(A)\mid B)), \tau^{-1}((\tau(C)\mid D))) = 2\left(\rk
  \begin{pmatrix}
  \tau(A) & B\\
  \tau(C) & D
  \end{pmatrix}
  -k\right)
  \\
  \ge
  2\left(\rk
  \begin{pmatrix}
  \tau(A)\\
  \tau(C)
  \end{pmatrix}
  -k\right) = \sdist(A, C) \ge d_1
  \end{align*}
  using Equation~(\ref{eq_d_s_rk}) in the first step.
  If $A=C$ but $B\neq D$, we have
  \begin{align*}
    \sdist(\tau^{-1}((\tau(A)\mid B)), \tau^{-1}((\tau(C)\mid D))) = 2\left(\rk
  \begin{pmatrix}
  \tau(A) & B\\
  \tau(C) & D
  \end{pmatrix}
  -k\right)
  \\
  \ge
  2\left(\rk
  \begin{pmatrix}
  \tau(A) & B\\
  0 & D-B
  \end{pmatrix}
  -k\right)
  =
  2(k+\rk(D-B)-k)
  \ge 2d_r
  \text{.}
  \end{align*}

  For $A'\ne C'\in C_2$ applying Equation~(\ref{eq_d_s_rk}) gives
  \[
    \sdist(\tau^{-1}(0_{k \times (v_1-k+d/2)} \mid \tau(A')), \tau^{-1}(0_{k \times (v_1-k+d/2)}\mid \tau(C')))= \sdist(A',C')\ge d_2.
  \]

  Last, for two codewords $U \in \{\tau^{-1}(\tau(U)\mid M) \mid U \in C_1, M \in C_r\}$ and
  $W \in \{\tau^{-1}(0_{k \times (v_1-k+d/2)}\mid \tau(W)) \mid W \in C_2\}$, we can use the shape of the pivot vectors and apply
  Lemma~\ref{lemma_d_s_d_h}. The pivot vector $p(U)$ has its $k$ ones in the first $v_1$ positions and the pivot vector $p(W)$ has
  its $k$ ones not in the first $v_1-k+d/2$ positions, so that the ones can coincide at most at the positions
  $\{v_1-k+d/2+1,\ldots,v_1\}$. Thus, $\dham(p(U),p(W)) \ge k-(k-d/2) + k-(k-d/2) = d$. Lemma~\ref{lemma_d_s_d_h} then gives
  $\sdist(U,W)\ge d$.
  
\end{proof}

An example where Theorem~\ref{theo:improved_linkage} yields a larger code than Theorem~\ref{thm_original_linkage} is e.g.\ given for the parameters
$q=2$, $v=7$, $d=4$, and $k=3$. In order to apply Theorem~\ref{thm_original_linkage} we have to choose $v_1+v_2=7$, $3 \le v_1 \le 4$, and
$3 \le v_2 \le 4$. For $v_1=3$ we obtain $\#C_1 \le A_2(3,4;3)=1$ and $\#C_2 \le A_2(4,4;3) = 1$. Since the size of the rank metric code is
bounded by $\left\lceil 2^{4(3-2+1)}\right\rceil=2^8$, the constructed code has a size of at most $1 \cdot 2^8+1 = 257$. For $v_1=4$ the roles of
$C_1$ and $C_2$ interchange. Since the size of the rank metric code is bounded by $\left\lceil 2^{3(3-2+1)} \right\rceil=2^6$, the constructed code has
a size of at most $1 \cdot 2^6 +1 = 65$. In Theorem~\ref{theo:improved_linkage} we can choose $d=4$, so that we can drop one column
of the zero matrix preceding the matrices of the second set of codewords, i.e., $v_1+v_2=7+1=8$. Choosing $v_1=3$ and $v_2=5$ we
can achieve $\#C_1=A_2(3,4;3)=1$ and $\#C_2= A_2(5,4;3) = 9$. Since the size of the rank metric code can attain
$\left\lceil 2^{4(3-2+1)} \right\rceil=2^8$ we can construct a code of size $1 \cdot 2^8 + 9 = 265$. While for these parameters sill larger
codes are known, the situation significantly changes in general. Considering the range of parameters $2\le q\le 9$, $4\le v\le 19$, and
$4\le d\le 2k\le v$, where $q$ is of course a prime power and $d$ is even, Theorem~\ref{thm_original_linkage} provides the best known
lower bound for $A_q(v,d;k)$ in $41.8$\% of the cases, while Theorem~\ref{theo:improved_linkage} provides the best known
lower bound in $65.6$\% of the cases. Since the sizes of both constructions can coincide, the sum of both fractions gives more than $100$\%.
In just $34.4$\% of the cases strictly superior constructions are known compared to Theorem~\ref{theo:improved_linkage}, where most of
them arose from the so-called Echelon-Ferrers construction or one of their variants, see \cite{HKKW2016Tables} and the corresponding
webpage.\footnote{Entries of type \texttt{improved\_linkage(m)} correspond to Corollary~\ref{cor_improved_linkage} with $m$ chosen
as parameter.}

If one is interested in codes of large size, then one should choose the parameters $d_1$, $d_2$, $d_r$, and $d$,
in Theorem~\ref{theo:improved_linkage}, as small as
possible in order to maximize the sizes $N_1$, $N_2$, and $N_r$, i.e., we can assume $d_1=d_2=2d_r=d$. Moreover, the codes $C_1$, $C_2$,
and $C_r$ should have the maximum possible size with respect to their specified parameters. For $C_r$ the maximum possible size is
$M(q,k,v_2+d/2,d)$ and for $C_i$ the maximum possible size is $A_q(v_1,d;k)$, where $i=1,2$.
\begin{corollary}
For positive integers $k\le \min\{v_1,v_2\}$ and $d\equiv 0\pmod 2$ we have
$
A_q(v_1+v_2-k+d/2,d;k)
\ge A_q(v_1,d;k) \cdot M(q,k,v_2+d/2,d) + A_q(v_2,d;k)
$.
\end{corollary}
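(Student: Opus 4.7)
The plan is to specialize Theorem~\ref{theo:improved_linkage} by choosing each ingredient code to be of maximum size. I set $d_1 = d_2 = d$ and $d_r = d/2$, which is valid since $d$ is even, and which forces $\min\{d_1, d_2, 2d_r, d\} = d$. For $C_1$ and $C_2$ I take cdcs realizing $A_q(v_1, d; k)$ and $A_q(v_2, d; k)$, which exist by definition of $A_q$. For the linear rank metric code $C_r \subseteq \F_q^{k \times (v_2 - k + d/2)}$ with minimum rank distance $d/2$, I invoke the existence of linear MRD codes (Theorem~\ref{thm_MRD_size} together with the remark following it), giving size $N_r = q^{\max\{k,\,v_2 - k + d/2\}\cdot(\min\{k,\,v_2 - k + d/2\} - d/2 + 1)}$.

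The one notational check needed to match the statement is that this $N_r$ equals $M(q,k,v_2+d/2,d)$: substituting $v = v_2 + d/2$ into the defining formula $M(q,k,v,d) = q^{\max\{k,v-k\}\cdot(\min\{k,v-k\} - d/2 + 1)}$ gives $v - k = v_2 - k + d/2$, so the two exponents coincide. Feeding these three component codes into Theorem~\ref{theo:improved_linkage} produces a cdc in $\gaussmset{\F_q^{v_1+v_2-k+d/2}}{k}$ of size $A_q(v_1,d;k)\cdot M(q,k,v_2+d/2,d) + A_q(v_2,d;k)$ with minimum subspace distance at least $d$, and the claimed inequality then follows from the definition of $A_q$ as the maximum achievable size. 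There is essentially no obstacle here: the corollary is a direct specialization of Theorem~\ref{theo:improved_linkage}, and the only delicate point is the bookkeeping that identifies the optimal rank metric code size with the lifted MRD notation $M(q,k,v_2+d/2,d)$.
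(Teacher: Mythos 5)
Your proposal is correct and matches the paper's own (implicit) argument exactly: the paper derives this corollary from Theorem~\ref{theo:improved_linkage} by setting $d_1=d_2=2d_r=d$ and taking $C_1$, $C_2$, $C_r$ of maximum size, with the same identification of the optimal rank metric code size as $M(q,k,v_2+d/2,d)$. Your bookkeeping of the exponent via $v-k=v_2-k+d/2$ is the right check, and nothing is missing.
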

Instead of $A_q(v_1,d;k)$ or $A_q(v_2,d;k)$ we may also insert any lower bound of these commonly unknown values. By a variable transformation
we obtain:
\begin{corollary}\label{cor_improved_linkage}
For positive integers $k \le m \le v-d/2$ and $d\equiv 0\pmod 2$ we have
$A_q(v,d;k) \ge A_q(m,d;k) \cdot M(q,k,v-m+k,d) + A_q(v-m+k-d/2,d;k)$.
\end{corollary}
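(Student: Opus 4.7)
The plan is to derive Corollary~\ref{cor_improved_linkage} from the preceding corollary by a direct change of variables, as the text announces. I would set $v_1 := m$ and $v_2 := v - m + k - d/2$. Under this substitution, the combined ambient dimension $v_1 + v_2 - k + d/2$ on the left-hand side of the preceding corollary becomes $m + (v - m + k - d/2) - k + d/2 = v$, so the left-hand side turns into $A_q(v,d;k)$, exactly as required.

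Next I would check that the three quantities on the right-hand side of the preceding corollary transform correctly. Clearly $A_q(v_1,d;k) = A_q(m,d;k)$ and $A_q(v_2,d;k) = A_q(v - m + k - d/2,d;k)$. For the rank-metric factor, we compute $v_2 + d/2 = v - m + k$, giving $M(q,k,v_2+d/2,d) = M(q,k,v-m+k,d)$. Combining these yields the bound claimed in the corollary.

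It remains to verify that the hypothesis $k \le \min\{v_1,v_2\}$ of the preceding corollary is guaranteed by the hypothesis $k \le m \le v - d/2$ of Corollary~\ref{cor_improved_linkage}. The inequality $k \le v_1 = m$ is immediate. The inequality $k \le v_2 = v - m + k - d/2$ simplifies to $m \le v - d/2$, which is exactly the upper bound assumed on $m$. The parity condition on $d$ is preserved, and positivity of $v_2$ follows from $v_2 \ge k \ge 1$. There is really no obstacle here; the only thing to get right is the bookkeeping of the substitution and the direction of the inequality $m \le v - d/2$, which is precisely what matches the shift by $d/2$ that Theorem~\ref{theo:improved_linkage} introduces over the original linkage construction.
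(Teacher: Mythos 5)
Your proposal is correct and is exactly the ``variable transformation'' the paper invokes (without spelling it out) to pass from the preceding corollary to Corollary~\ref{cor_improved_linkage}: the substitution $v_1=m$, $v_2=v-m+k-d/2$ and the translation of $k\le\min\{v_1,v_2\}$ into $k\le m\le v-d/2$ all check out. Nothing further is needed.
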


For the parameters of spreads the optimal choice of the parameter $m$ in Corollary~\ref{cor_improved_linkage} can be determined analytically:
\begin{lemma}
  If $d=2k$ and $k$ divides $v$, then Corollary~\ref{cor_improved_linkage} gives $A_q(v,d;k)\ge \frac{q^v-1}{q^k-1}$ for all $m=k,2k,\ldots,v-k$
  and smaller sizes otherwise.
\end{lemma}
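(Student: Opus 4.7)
The plan is to specialize Corollary~\ref{cor_improved_linkage} to $d=2k$ and then to split into the cases $k\mid m$ and $k\nmid m$. As a first step I would compute $M(q,k,v-m+k,2k)$: the admissible range $k\le m\le v-k$ forces $v-m\ge k$, so $\min\{k,v-m\}=k$, $\max\{k,v-m\}=v-m$, and $\min-d/2+1=1$. This yields $M(q,k,v-m+k,2k)=q^{v-m}$, and the corollary specializes to
\[
A_q(v,2k;k) \ge A_q(m,2k;k)\cdot q^{v-m} + A_q(v-m,2k;k).
\]

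Next I would handle the case $m\in\{k,2k,\ldots,v-k\}$. Since $k\mid v$ by assumption, both $m$ and $v-m$ are then multiples of $k$, so Theorem~\ref{thm_spread} supplies $A_q(m,2k;k)=\frac{q^m-1}{q^k-1}$ and $A_q(v-m,2k;k)=\frac{q^{v-m}-1}{q^k-1}$. Substituting into the displayed bound telescopes to $\frac{(q^m-1)q^{v-m}+q^{v-m}-1}{q^k-1}=\frac{q^v-1}{q^k-1}$, which is the first half of the claim.

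For the remaining values $m\in[k,v-k]$ I have $k\nmid m$, and consequently $k\nmid(v-m)$ because $k\mid v$. The elementary point-counting bound $A_q(m',2k;k)\le\frac{q^{m'}-1}{q^k-1}$ is strict whenever $\mathbb{F}_q^{m'}$ admits no spread, which by Theorem~\ref{thm_spread} is exactly the condition $k\nmid m'$. Hence both $A_q(m,2k;k)<\frac{q^m-1}{q^k-1}$ and $A_q(v-m,2k;k)<\frac{q^{v-m}-1}{q^k-1}$, and inserting these strict inequalities into the specialization yields a value strictly less than $\frac{q^v-1}{q^k-1}$. The main obstacle is merely bookkeeping; the only subtle point is invoking Theorem~\ref{thm_spread} as a necessary condition for spread existence, which secures the strictness.
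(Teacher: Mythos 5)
Your proof is correct and follows essentially the same route as the paper: specialize the corollary to $d=2k$ (noting $M(q,k,v-m+k,2k)=q^{v-m}$), use $A_q(m',2k;k)=\frac{q^{m'}-1}{q^k-1}$ for $k\mid m'$ via Theorem~\ref{thm_spread}, and observe that the bound drops strictly when $k\nmid m$. The only (harmless) difference is in the last step: the paper invokes the slightly stronger integrality/partial-spread fact $A_q(m,2k;k)\le\frac{q^m-1}{q^k-1}-1$, whereas you derive strictness more economically from Segre's criterion, which is all that is needed.
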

\begin{proof}
  Using $A_q(v',2k;k)=(q^{v'}-1)/(q^k-1)$ for all integers $v'$ being divisible by $k$, we obtain
  \begin{eqnarray*}
    A_q(v,d;k) &\ge& A_q(m,d;k) \cdot M(q,k,v-m+k,2k) + A_q(v-m,2k;k)\\
    &=& \frac{q^m-1}{q^k-1}\cdot q^{v-m}+ \frac{q^{v-m}-1}{q^k-1}=\frac{q^{v}-1}{q^k-1}
  \end{eqnarray*}
  if $k$ divides $m$. Otherwise, $A_q(m,2k;k) \le (q^m-1)/(q^k-1) -1$ gives a lower bound.
  
\end{proof}

We remark that the tightest implications of Corollary~\ref{cor_improved_linkage} can be evaluated by dynamic programming. To this end
we consider fixed parameters $q$, $d$, $k$ and use the abbreviations $a(n):=A_q(n,d;k)$ and $b(n):=M(q,k,n+k,d)$ for integers $n$, so that
the inequality of Corollary~\ref{cor_improved_linkage} reads
\begin{equation}
  \label{eq_recursion_linkage}
  a(v) \ge a(m) \cdot b(v-m) + a(v-m+k-d/2).
\end{equation}
For a given maximal value $v$ we initialize the values $a(n)$ for $1\le n\le v$ by the best known lower bounds for
$A_q(n,d;k)$ from other constructions. Then we loop over $n$ from $k$ to $v$ and eventually replace $a(n)$ by
\[\max\{ a(m) \cdot b(n-m) + a(n-m+k-d/2) \mid k \le m \le n-d/2 \}.\]

By an arithmetic progression we can use (\ref{eq_recursion_linkage}) in order to obtain a lower bound for $a(v)=A_q(v,d;k)$ given
just two initial $a(i)$-values.

\begin{proposition}\label{prop:nonrecursive_version}
  For positive integers $k\le v_0$, $2s\ge d$, and $l\ge 0$, we have
  \[a(v_0+ls) \ge  a(v_0)\cdot b(s)^l+a(s-d/2+k) \gaussmnum{l}{1}{b(s)}.\]
  If additionally,  $v_0 \ge 2k-d/2$ and $k \ge d/2$, then we have
  \[a(v_0+ls) \ge a(s+k-d/2) \cdot {(q^{k-d/2+1})}^{n_0-k+d/2} \gaussmnum{l}{1}{q^{s(k-d/2+1)}} + a(v_0).\]
\end{proposition}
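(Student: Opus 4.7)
The plan is to prove both inequalities by induction on $l$, using the recursion~(\ref{eq_recursion_linkage})
\[a(v) \ge a(m) \cdot b(v-m) + a(v-m+k-d/2)\]
as the only tool. The base case $l=0$ is immediate in both claims: since $\gaussmnum{0}{1}{Q}=0$ the asserted inequality collapses to $a(v_0)\ge a(v_0)$ in each case. For the inductive step the essential identity is
\[\gaussmnum{l}{1}{Q} = Q\cdot\gaussmnum{l-1}{1}{Q}+1 = Q^{l-1}+\gaussmnum{l-1}{1}{Q}.\]

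For the first inequality I would apply the recursion to $v=v_0+ls$ with $m=v_0+(l-1)s$, which gives $v-m=s$ and $v-m+k-d/2=s+k-d/2$. The feasibility conditions $k\le m\le v-d/2$ reduce to $k\le v_0$ (hypothesis) and $s\ge d/2$ (from $2s\ge d$). The recursion then yields
\[a(v_0+ls)\ge a(v_0+(l-1)s)\cdot b(s) + a(s+k-d/2),\]
and substituting the induction hypothesis together with the first form of the identity above produces the claimed bound.

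For the second inequality I would instead take $m=s+k-d/2$, independent of $l$. Then $v-m+k-d/2 = v_0+(l-1)s$ and $v-m = v_0+(l-1)s-k+d/2$, so the recursion reads
\[a(v_0+ls)\ge a(s+k-d/2)\cdot b\!\left(v_0+(l-1)s-k+d/2\right) + a(v_0+(l-1)s).\]
The key computation is evaluating $b(v_0+(l-1)s-k+d/2)=M(q,k,v_0+(l-1)s+d/2,d)$. The hypotheses $v_0\ge 2k-d/2$ and $k\ge d/2$ guarantee $v_0+(l-1)s+d/2-k\ge k$ for every $l\ge 1$, so the $\max$ in the lifted MRD formula is attained by $v_0+(l-1)s+d/2-k$ and the $\min$ by $k$. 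The quantity therefore equals $q^{(v_0-k+d/2+(l-1)s)(k-d/2+1)}$, which factors as $(q^{k-d/2+1})^{v_0-k+d/2}\cdot(q^{s(k-d/2+1)})^{l-1}$. Combining with the induction hypothesis and the second form of the identity above yields the claim.

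The main obstacle is bookkeeping: verifying $k\le m\le v-d/2$ at every application of the recursion, and identifying the correct branch of the $\max/\min$ in $M(q,k,\cdot,d)$. The hypothesis $v_0\ge 2k-d/2$ is precisely what forces the "right" branch in the second part, so that the lifted MRD exponent grows linearly in $l$ with rate $s(k-d/2+1)$, exactly matching the base appearing inside the $q$-binomial coefficient $\gaussmnum{l}{1}{q^{s(k-d/2+1)}}$; everything else is routine manipulation.
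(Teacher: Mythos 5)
Your proposal is correct and follows essentially the same route as the paper's proof: both parts are inductions on $l$ via the recursion~(\ref{eq_recursion_linkage}), with $m=v_0+(l-1)s$ for the first inequality and the fixed choice $m=s+k-d/2$ for the second, and with the identical evaluation of $b(v_0+(l-1)s-k+d/2)$ under the hypotheses $v_0\ge 2k-d/2$ and $k\ge d/2$. The only cosmetic difference is that the paper unrolls the second recursion into a geometric sum before summing it, whereas you absorb the same computation into the inductive step via $\gaussmnum{l}{1}{Q}=Q^{l-1}+\gaussmnum{l-1}{1}{Q}$.
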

\begin{proof}
  Using Inequality~(\ref{eq_recursion_linkage}) with $v=v_0+ls$ and $m=v_0+(l-1)s$ gives
  \[a(v_0+ls) \ge a(v_0+(l-1)s) \cdot b(s) + a(s+k-d/2).\]
  By induction, we obtain
  \[a(v_0+ls) \ge  a(v_0+(l-i)s)\cdot b(s)^i + a(s+k-d/2) \gaussmnum{i}{1}{b(s)}\]
  for all $0\le i\le l$.

  For the second part, applying Inequality~(\ref{eq_recursion_linkage}) with $v=v_0+ls$ and $m=s+k-d/2$ gives
  \[a(v_0+ls) \ge a(s+k-d/2) \cdot b(v_0+(l-1)s-k+d/2) + a(v_0+(l-1)s).\]
  By induction, we obtain
  \[a(v_0+ls) \ge a(s+k-d/2) \cdot \sum_{j=1}^{i} b(v_0+(l-j)s-k+d/2) + a(v_0+(l-i)s)\]
  for all $0\le i\le l$.

  If $v_0 \ge 2k-d/2$ and $k \ge d/2$, then \[b(v_0+(l-j)s-k+d/2)={(q^{k-d/2+1})}^{v_0+(l-j)s-k+d/2},\] so that
  \begin{align*}
\sum_{j=1}^{l} b(v_0+(l-j)s-k+d/2)
=
\sum_{j=1}^{l} {(q^{k-d/2+1})}^{v_0+(l-j)s-k+d/2}=
\\
{(q^{k-d/2+1})}^{v_0-k+d/2} \sum_{r=0}^{l-1} {(q^{s(k-d/2+1)})}^{r}
=
{(q^{k-d/2+1})}^{v_0-k+d/2} \gaussmnum{l}{1}{q^{s(k-d/2+1)}}.
\end{align*}

\end{proof}

\begin{example}
\label{ex_ap_1}
Using $A_2(13,4;3) = 1597245$ \cite{Braun16} and $A_2(7,4;3) \ge 333$ \cite{HKKW2016Tables}, applying Proposition~\ref{prop:nonrecursive_version}
with $s=6$ gives
\begin{align*}
A_2(13+6l,4;3)
\ge
4096^l \cdot 1597245 + 333 \cdot\frac{4096^l-1}{4095}
\end{align*}
and
\begin{align*}
A_2(13+6l,4;3)
\ge
333 \cdot 16777216 \cdot\frac{4096^l-1}{4095} + 1597245
\end{align*}
for all $l\ge 0$.
\end{example}
In the next section we will see that the first lower bound almost meets the Anticode bound.

We remark that Theorem~\ref{theo:improved_linkage} can be easily generalized to a construction based on a union of $m\ge 2$ sets of codewords.
\begin{corollary}\label{cor:multiple_linkage}
  For positive integers $k$, $m$, and $i=1,\ldots,m$ let
\begin{itemize}
\item $C_i$ be an $(v_i,N_i,d_i;k)_q$ constant dimension code,
\item $\delta_i \in \mathbb{N}_{\ge 0}$, $\delta_m=0$,
\item $C_i^R$ be a $(k \times v_i^R, N_i^R, d_i^R)_q$ linear rank metric code, where $v_i^R=\sum_{j=1}^{i-1}(v_j-\delta_j)$ and $i \ne 1$,
\item $C_1^R=\emptyset$, $v_1^R=0$, $N_1^R=1$, and $d_1^R=\infty$.
\end{itemize}
Then
\[ \bigcup_{i=1}^{m} \{\tau^{-1}(0_{k \times (v-v_i-v_i^R)} \mid \tau(U_i) \mid M_i) : U_i \in C_i, M_i \in C_i^R\} \]
is a $(v,N,d;k)_q$ constant dimension code with
\begin{itemize}
\item $v=\sum_{i=1}^{m} (v_i-\delta_i)$,
\item $N=\sum_{i=1}^{m}N_i \cdot N_i^R$, and
\item $d=\min\{d_i, 2d_i^R, 2(k-\delta_i) \mid i=1,\ldots,m\}$.
\end{itemize}
\end{corollary}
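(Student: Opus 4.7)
The plan is to generalize Theorem~\ref{theo:improved_linkage} from two strata to $m$ strata. First I would dispatch the structural bookkeeping: each block matrix $(0_{k \times (v-v_i-v_i^R)} \mid \tau(U_i) \mid M_i)$ has $k$ rows, and the three widths sum to $v = \sum_{j=1}^m (v_j - \delta_j)$ columns. The matrix is in rref because prepending zero columns preserves rref while $M_i$ sits entirely to the right of all $k$ pivots of $\tau(U_i)$. Within a fixed stratum $i$, distinct pairs $(U_i, M_i)$ yield distinct matrices, and two matrices from different strata have their pivot ones located in distinct sets of columns and are thus automatically distinct, so $\#\mathcal{C} = \sum_{i=1}^m N_i N_i^R$.

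For the distance bound I would split the verification into two cases. For two codewords drawn from the same stratum $i$, the argument of Theorem~\ref{theo:improved_linkage} transfers essentially verbatim to the stacked matrix: the common leading zero block is inert, so Equation~(\ref{eq_d_s_rk}) together with the injectivity of $\tau$ yields $\sdist \ge d_i$ when the $C_i$-parts differ, and an elementary row operation on the lower block followed by the rank bound for $C_i^R$ yields $\sdist \ge 2 d_i^R$ when the $C_i$-parts coincide but the rank metric parts differ.

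The cross-stratum case $i < j$ is the essential new ingredient, and here I would invoke Lemma~\ref{lemma_d_s_d_h}. A stratum-$i$ codeword confines its $k$ pivot ones to the window $W_i := [v - v_i - v_i^R + 1,\, v - v_i^R]$; analogously for $W_j$. Using the identity $v_j^R - v_i^R = \sum_{l=i}^{j-1}(v_l - \delta_l)$, a short calculation shows $|W_i \cap W_j| \le \delta_i - \sum_{l=i+1}^{j-1}(v_l - \delta_l) \le \delta_i$, with equality for adjacent strata $j = i+1$. Consequently at most $\delta_i$ of the $k$ pivot ones can be shared, so $\dham(p(U), p(W)) \ge 2(k - \delta_i)$, and Lemma~\ref{lemma_d_s_d_h} upgrades this to $\sdist \ge 2(k - \delta_i)$. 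Taking the minimum over all contributions gives the claimed minimum distance.

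The main obstacle is the positional bookkeeping of the pivot windows across all $m$ strata; once one recognises that the non-adjacent overlap can only be smaller than the adjacent one thanks to the positivity of $v_l - \delta_l$, the Hamming estimate is immediate and the remaining arguments reduce to the two-stratum case already handled in Theorem~\ref{theo:improved_linkage}.
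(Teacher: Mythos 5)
Your proof is correct, but it takes a genuinely different route from the paper. The paper proves Corollary~\ref{cor:multiple_linkage} by induction on $m$: it packages each stratum as a padded $(v_i+v_i^R,N_iN_i^R,\min\{d_i,2d_i^R\};k)_q$ code $\tilde C_i$ and merges them one at a time via $m-1$ applications of Theorem~\ref{theo:improved_linkage}, each time with the parameter $d=2(k-\delta_i)$; all distance verification is thereby delegated to the two-block theorem, and the cross-stratum bound for non-adjacent pairs $i<j$ is handled only implicitly through the intermediate merged codes. You instead give a direct verification: same-stratum pairs are handled by transporting the rank argument of Theorem~\ref{theo:improved_linkage} past the inert zero prefix, and for cross-stratum pairs you compute the pivot windows $W_i$ explicitly, bound $\left|W_i\cap W_j\right|\le \delta_i-\sum_{l=i+1}^{j-1}(v_l-\delta_l)\le\delta_i$, and invoke Lemma~\ref{lemma_d_s_d_h} once, uniformly. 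The paper's route is shorter and directly yields its subsequent remark that the corollary is subsumed by the dynamic-programming use of Theorem~\ref{theo:improved_linkage}; your route makes the non-adjacent overlap estimate transparent (showing it can only improve on the adjacent case) and avoids the slightly fiddly matching of ambient dimensions $\sum_{i\le m'}(v_i-\delta_i)+\delta_{m'}$ in the induction step. One small imprecision: your claim that codewords from different strata are ``automatically distinct'' because their pivots lie in ``distinct sets of columns'' is not literally right, since the windows may overlap in up to $\delta_i$ positions; distinctness instead follows from your own Hamming bound $\dham(p(U),p(W))\ge 2(k-\delta_i)>0$, which requires the (implicitly assumed, also in the paper) condition $\delta_i<k$.
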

\begin{proof}
We prove by inductively applying Theorem~\ref{theo:improved_linkage} $m-1$ times.
Denote
\[\tilde{C}_i:=\{\tau^{-1}(0_{k \times (v-v_i-v_i^R)} \mid \tau(U_i) \mid M_i) : U_i \in C_i, M_i \in C_i^R\}\]
for $i=1,\ldots,m$, i.e., $\tilde{C}_i$ is a padded $(v_i+v_i^R,N_i \cdot N_i^R,\min\{d_i,2d_i^R\};k)_q$ constant dimension code.
Applying Theorem~\ref{theo:improved_linkage} for $\tilde{C}_1$ and $\tilde{C}_2$ with $d=2(k-\delta_1)$ yields a
$(v_1+v_2-\delta_1,N_1+N_2\cdot N_2^R,\min\{d_1,d_2,2d_2^R,2(k-\delta_1)\};k)_q$ constant dimension code.
If the first $m'$ codes, $\tilde{C}_1,\ldots,\tilde{C}_{m'}$ yield an
$(\sum_{i=1}^{m'} (v_i-\delta_i)+\delta_{m'},\sum_{i=1}^{m'}N_i \cdot N_i^R,\min\{d_i, 2d_i^R, 2(k-\delta_i) \mid i=1,\ldots,m'\};k)_q$
constant dimension code $\tilde{C}_{1,\ldots,m'}$, then performing Theorem~\ref{theo:improved_linkage} for this code and $\tilde{C}_{m'+1}$
with $d=2(k-\delta_{m'})$ yields an $(\sum_{i=1}^{m'} (v_i-\delta_i)+\delta_{m'} + n_2-\delta_{m'},\sum_{i=1}^{m'}N_i \cdot N_i^R+N_{m'+1}
\cdot N_{m'+1}^R,\min\{d_i, 2d_i^R, 2(k-\delta_i) \mid i=1,\ldots,m'+1\};k)_q$ constant dimension code.

\end{proof}

Since the proof uses multiple applications of Theorem~\ref{theo:improved_linkage} this code can be found by the
dynamic programming approach based on Theorem~\ref{theo:improved_linkage}, i.e., Corollary~\ref{cor:multiple_linkage} is redundant.
However, it can be used to prove:

\begin{corollary}[cf. {\cite[Theorem~4.6]{MR3543532}}]
Let $C^R$ be an $(k \times v_1+v_2,d)_q$ linear MRD code, where $k \le v_i$, for $i=1,2$ and let $C_i$ be an $(v_{i-2},N_i,2d;k)_q$
constant dimension codes for $i=3,4$. Then
\begin{align*}
&
\{\tau^{-1}(I_{k \times k} \mid A) \mid A \in C^R\}
\\
\cup
&
\{\tau^{-1}(0_{k \times k} \mid \tau(A) \mid 0_{k \times v_2}) \mid A \in C_3\}
\\
\cup
&
\{\tau^{-1}(0_{k \times k} \mid 0_{k \times v_1} \mid \tau(A)) \mid A \in C_4\}
\end{align*}
is a $(v_1+v_2+k,q^{(v_1+v_2)(k-d+1)}+N_3+N_4,2\min\{d,k\};k)_q$ constant dimension code. Note that $k < d$ implies $N_3, N_4 \le 1$.
\end{corollary}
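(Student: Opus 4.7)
The plan is to verify three things: every matrix in the union represents a $k$-dimensional subspace of $\mathbb{F}_q^{v_1+v_2+k}$, the three sets are pairwise disjoint (so the asserted cardinality is attained), and the minimum subspace distance is at least $2\min\{d,k\}$. The dimension claim is immediate since each matrix is $k\times(v_1+v_2+k)$ of rank $k$. The bookkeeping for the other two claims is done via pivot vectors: codewords in the first set are in rref with pivots exactly $\{1,\ldots,k\}$; codewords in the second set have their pivots entirely in $\{k+1,\ldots,k+v_1\}$ (inherited from $\tau(A)$ and shifted by $k$); and codewords in the third set have pivots entirely in $\{k+v_1+1,\ldots,k+v_1+v_2\}$. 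These three pivot regions are pairwise disjoint, yielding disjointness of the three sets, and the cardinality count then follows by applying Theorem~\ref{thm_MRD_size} to $C^R$. Moreover, for any two codewords $U,W$ taken from different sets, $p(U)$ and $p(W)$ have disjoint supports, so Lemma~\ref{lemma_d_s_d_h} gives $\sdist(U,W)\ge\dham(p(U),p(W))=2k$.

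For the intra-set distances there are three cases. Two distinct codewords $\tau^{-1}(I\mid A)$ and $\tau^{-1}(I\mid B)$ in the first set yield, via Equation~(\ref{eq_d_s_rk}) and a single block-row reduction, $\sdist=2\rk(B-A)\ge 2d$, the standard lifted-MRD calculation. Within the second and third sets, padding with zero columns on either side does not change the subspace distance, so two distinct codewords inherit $\sdist\ge 2d$ from the hypotheses on $C_3$ and $C_4$ respectively. Combining the intra-set lower bound $2d$ with the inter-set lower bound $2k$ gives the claimed minimum $\min\{2d,2k\}=2\min\{d,k\}$.

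The only subtle point worth flagging is the regime $k<d$: there $C^R$ collapses to $\{0\}$ under the ceiling convention of Theorem~\ref{thm_MRD_size}, and both $C_3,C_4$ can contain at most one codeword since $2d$ then exceeds the diameter $2k$ of the relevant Grassmannians, so the size formula remains correct. The whole argument is essentially the $m=3$ instance of Corollary~\ref{cor:multiple_linkage} (equivalently two nested applications of Theorem~\ref{theo:improved_linkage}), and the pivot-disjointness argument for cross-pairs, reused directly from the proof of Theorem~\ref{theo:improved_linkage}, is the only nontrivial step; everything else reduces to a rank calculation and the definitions.
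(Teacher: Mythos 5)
Your proof is correct and rests on exactly the same ingredients as the paper's: pivot-support disjointness combined with Lemma~\ref{lemma_d_s_d_h} for pairs drawn from different sets, the rank computation of Equation~(\ref{eq_d_s_rk}) for the lifted-MRD block, and the distances inherited from $C_3$ and $C_4$ for the zero-padded blocks. The only difference is organizational: the paper disposes of the statement in two lines by instantiating Corollary~\ref{cor:multiple_linkage} with $m=3$, $\bar{C}_3=\{\tau^{-1}(I_{k\times k})\}$, $\bar{C}_2^R=\{0_{k\times v_2}\}$, $\bar{C}_3^R=C^R$, and all $\delta_i=0$, whereas you unfold that machinery into a self-contained direct verification --- an equivalence you correctly flag yourself.
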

\begin{proof}
Applying Corollary~\ref{cor:multiple_linkage} with
\begin{itemize}
\item $m=3$
\item $\bar{C}_1 = C_4$, $\bar{C}_2 = C_3$,
\item $\bar{C}_3 = \{\tau^{-1}(I_{k \times k})\}$ (i.e., an $(k,1,\infty;k)_q$ constant dimension code)
\item $\delta_1=\delta_2=\delta_3=0$
\item $\bar{C}_1^R = \emptyset$
\item $\bar{C}_2^R=\{0_{k \times v_2}\}$ (i.e., an $(k \times v_2,1,\infty)_q$ rank metric code)
\item $\bar{C}_3^R$ an $(k \times (v_1+v_2,d))_q$ MRD code
\end{itemize}
yields an $(v_1+v_2+k,N_4 + N_3 + q^{(v_1+v_2)(k-d+1)},2\min\{d,k\};k)_q$ constant dimension code:
\begin{align*}
&
\{\tau^{-1}(I_{k \times k} \mid M_3) : M_3 \in \bar{C}_3^R\}
\\
\cup
&
\{\tau^{-1}(0_{k \times k} \mid \tau(U_2) \mid 0_{k \times v_2}) : U_2 \in C_3\}
\\
\cup
&
\{\tau^{-1}(0_{k \times (v_1+k)} \mid \tau(U_1)) : U_1 \in C_4\}
\end{align*}

\end{proof}

We remark that $\{(A \mid B) : A \in C_1^R, B \in C_2^R \}$ is a $(k \times (v_1+v_2),d)_q$ linear MRD code,
since each codeword has $\rk(A \mid B) \ge \rk(A) \ge d$. The other direction is not necessarily true, e.g.,
$\left(\begin{smallmatrix}I_{k-1}\\0\end{smallmatrix} \mid 0 \mid \ldots \mid 0 \mid w\right)$, where $w$ is a non-zero column, cannot be split in two matrices $\left(\begin{smallmatrix}I_{k-1}\\0\end{smallmatrix} \mid 0 \mid \ldots \mid 0\right)$ and $\left(0 \mid \ldots \mid 0 \mid w\right)$
both having rank distance at least $d$ for $d \ge 2$. Hence, this corollary constructs codes of the same size as
Theorem~4.6 in \cite{MR3543532} but these codes are not necessarily equal.

\section{Asymptotic bounds}
\label{sec_asymptotic}

K\"otter and Kschischang  have stated the bounds
\[
  1< q^{-k(v-k)}\cdot\gaussmnum{v}{k}{q}<4
\]
for $0<k<v$ in \cite[Lemma~4]{MR2451015} for the $q$-binomial coefficients. They used this result in order to prove that the lifted MRD codes, they
spoke about linearized polynomials, have at least a size of a quarter of the Singleton bound if $v$ tends to infinity. Actually, they have derived
a more refined bound, which can best expressed using the so called $q$-Pochhammer symbol
$
  (a;q)_n:=\prod_{i=0}^{n-1} \left(1-aq^i\right)
$
specializing to $(1/q;1/q)_n=\prod_{i=1}^{n}\left(1-1/q^i\right)$:
\begin{equation}
  \label{ie_q_binomial_coefficient}
  1\le \frac{\gaussmnum{v}{k}{q}}{q^{k(v-k)}} \le \frac{1}{(1/q;1/q)_k}\le\frac{1}{(1/q;1/q)_\infty}\le \frac{1}{(1/2;1/2)_\infty}\approx 3.4627,
\end{equation}
where $(1/q;1/q)_\infty$ denotes $\lim_{n\to\infty} (1/q;1/q)_n$, cf.~the estimation for the Anticode bound in the proof of
Proposition~\ref{prop_optimal_johnson}. The sequence $(1/q;1/q)_\infty$
is monotonically increasing with $q$ and approaches $(q-1)/q$ for large $q$, see e.g.\ \cite{MR2451015} and \cite{khaleghi2009subspace}
for some numerical values.
\begin{lemma}
  \label{lemma_q_binomial_coefficient_limit}
  For each $b\in \mathbb{N}_{\ge 0}$ we have $\lim\limits_{a\to\infty} \frac{\gaussmnum{a+b}{b}{q}}{q^{ab}}=\frac{1}{(1/q;1/q)_b}$.
\end{lemma}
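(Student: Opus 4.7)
The plan is to start from the explicit product formula
\[
\gaussmnum{a+b}{b}{q}=\prod_{i=1}^{b}\frac{q^{a+i}-1}{q^{i}-1}
\]
given at the beginning of the introduction, and simply divide by $q^{ab}=\prod_{i=1}^{b}q^{a}$ to pull the normalization inside the product. A one-line manipulation then rewrites each factor as
\[
\frac{q^{a+i}-1}{q^{a}(q^{i}-1)}=\frac{q^{i}-q^{-a}}{q^{i}-1}.
\]

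The main observation is that each of the $b$ factors is an elementary function of $q^{-a}$, and $b$ is fixed while $a\to\infty$. Hence I may take the limit factor by factor: since $q\ge 2$, we have $q^{-a}\to 0$, so
\[
\lim_{a\to\infty}\frac{q^{i}-q^{-a}}{q^{i}-1}=\frac{q^{i}}{q^{i}-1}=\frac{1}{1-q^{-i}}
\]
for each fixed $i\in\{1,\dots,b\}$. Multiplying these $b$ limits (a finite product, so interchanging limit and product is trivial) gives
\[
\lim_{a\to\infty}\frac{\gaussmnum{a+b}{b}{q}}{q^{ab}}=\prod_{i=1}^{b}\frac{1}{1-q^{-i}}=\frac{1}{(1/q;1/q)_{b}},
\]
which is exactly the claimed identity, by the definition of the $q$-Pochhammer symbol recalled just above the lemma.

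There is essentially no obstacle: the only thing to be careful about is the algebraic rewriting of the individual factor, and the fact that $b$ is fixed (so only a finite product is involved and no dominated-convergence-type argument is needed). The $b=0$ case is consistent since both sides equal $1$ by convention (empty product). Note also that this matches the upper half of Inequality~(\ref{ie_q_binomial_coefficient}) in the limit, confirming that the bound $\gaussmnum{v}{k}{q}/q^{k(v-k)}\le 1/(1/q;1/q)_{k}$ is asymptotically sharp as $v-k\to\infty$ with $k$ fixed.
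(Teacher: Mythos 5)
Your proposal is correct and follows essentially the same route as the paper's proof: plug in the product formula for the $q$-binomial coefficient, distribute $q^{ab}$ over the $b$ factors, and take the limit of the finite product factor by factor to obtain $\prod_{i=1}^{b}(1-q^{-i})^{-1}=1/(1/q;1/q)_b$. The paper's version is just a one-line computation of the same manipulation.
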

\begin{proof}
  Plugging in the definition of the $q$-binomial coefficient, we obtain
  \[
    \lim\limits_{a\to\infty} \frac{\gaussmnum{a+b}{b}{q}}{q^{ab}}=\lim\limits_{a\to\infty} \frac{\prod_{i=1}^b \frac{q^{a+i}-1}{q^i-1}}{q^{ab}}
    =\prod_{i=1}^b \frac{q^{i}}{q^i-1}=\prod_{i=1}^b \frac{1}{1-1/q^i}=\frac{1}{(1/q;1/q)_b}.
  \]
  
\end{proof}

Using this asymptotic result we can compare the size of the lifted MRD codes to the Singleton and the Anticode bound.

\begin{proposition}
  For $k \le v-k$ the ratio of the size of an LMRD code divided by the size of the Singleton bound converges
  for $v \rightarrow \infty$ monotonically decreasing to $(1/q;1/q)_{k-d/2+1}\ge (1/2;1/2)_\infty > 0.288788$.
\end{proposition}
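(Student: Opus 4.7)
The plan is to reduce both quantities to a clean $q$-binomial ratio. Since $k\le v-k$, we have $\max\{k,v-k\}=v-k$ and $\min\{k,v-k\}=k$, so the LMRD size becomes $M(q,k,v,d)=q^{(v-k)(k-d/2+1)}$, while by the orthogonality identity the Singleton bound rewrites as $\gaussmnum{v-d/2+1}{v-k}{q}=\gaussmnum{v-d/2+1}{k-d/2+1}{q}$. Setting $a:=v-k$ and $b:=k-d/2+1$ turns the ratio into $q^{ab}/\gaussmnum{a+b}{b}{q}$, with $b$ fixed and $a\to\infty$ as $v\to\infty$.

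For the limit I would simply invoke Lemma~\ref{lemma_q_binomial_coefficient_limit}: it yields $\gaussmnum{a+b}{b}{q}/q^{ab}\to 1/(1/q;1/q)_b$, so the ratio of interest tends to $(1/q;1/q)_{k-d/2+1}$.

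To show the convergence is monotonic and from above, I would rewrite
\[
\frac{q^{ab}}{\gaussmnum{a+b}{b}{q}}=\prod_{i=1}^{b}\frac{q^{a}(q^{i}-1)}{q^{a+i}-1}=\prod_{i=1}^{b}\frac{1-q^{-i}}{1-q^{-a-i}}.
\]
Each factor is positive, and as $a$ grows the denominator $1-q^{-a-i}$ strictly increases toward $1$, so each factor strictly decreases. Hence the product strictly decreases in $v$ and converges down to $\prod_{i=1}^{b}(1-q^{-i})=(1/q;1/q)_b$.

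Finally, for the stated numerical lower bound I would use two elementary monotonicities of the $q$-Pochhammer symbol: since every factor $1-q^{-i}$ lies in $(0,1)$, truncating after finitely many terms enlarges the product, yielding $(1/q;1/q)_{k-d/2+1}\ge (1/q;1/q)_\infty$; and because each factor $1-q^{-i}$ is increasing in $q\ge 2$, we have $(1/q;1/q)_\infty\ge (1/2;1/2)_\infty$, whose value of roughly $0.288788$ is already recorded (as the reciprocal of $\approx 3.4627$) in the chain~(\ref{ie_q_binomial_coefficient}). The whole argument is essentially bookkeeping; the only potential stumbling block is correctly identifying $a$ and $b$ after exploiting the orthogonality symmetry of the $q$-binomial coefficient.
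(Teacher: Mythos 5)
Your proposal is correct and follows essentially the same route as the paper: both reduce the ratio to $q^{ab}/\gaussmnum{a+b}{b}{q}$ with $a=v-k$, $b=k-d/2+1$ and invoke Lemma~\ref{lemma_q_binomial_coefficient_limit} for the limit. The only difference is in the monotonicity step, where the paper computes $g(s)/g(s+1)>1$ via the $q$-Pascal recurrence $\gaussmnum{s+1+z}{z}{q}=q^{z}\gaussmnum{s+z}{z}{q}+\gaussmnum{s+z}{z-1}{q}$, while you factor the ratio as $\prod_{i=1}^{b}\frac{1-q^{-i}}{1-q^{-a-i}}$ and observe each factor decreases in $a$ --- an equally valid and arguably more transparent argument that also exhibits convergence from above directly.
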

\begin{proof}
Setting $z=k-d/2+1$ and $s=v-k$ the ratio is given by $g(s):=
\frac{q^{sz}}{\gaussmnum{s+z}{z}{q}}$, so that Lemma~\ref{lemma_q_binomial_coefficient_limit} gives the proposed limit.
The sequence is monotonically decreasing, since we have $0 \le z-1 < z \le s+z$ and
\[
\frac{g(s)}{g(s+1)}
=
\frac{q^{sz}\gaussmnum{s+1+z}{z}{q}}{\gaussmnum{s+z}{z}{q} q^{(s+1)z}}
=
\frac{q^{z}\gaussmnum{s+z}{z}{q} + \gaussmnum{s+z}{z-1}{q}}{\gaussmnum{s+z}{z}{q} q^{z}}
=
1+\frac{\gaussmnum{s+z}{z-1}{q}}{\gaussmnum{s+z}{z}{q} q^{z}}
>
1.
\]

\end{proof}

\begin{proposition}
  \label{prop_ratio_lmrd_anticode}
  For $k \le v-k$ the ratio of the size of an LMRD code divided by the size of the Anticode bound converges
  for $v \rightarrow \infty$ monotonically decreasing to $\frac{(1/q;1/q)_{k}}{(1/q;1/q)_{d/2-1}}\ge \frac{q}{q-1}\cdot (1/q;1/q)_{k}
  \ge 2\cdot (1/2;1/2)_\infty > 0.577576$.
\end{proposition}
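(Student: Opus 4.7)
The plan is to imitate the proof of the preceding proposition for the LMRD/Singleton ratio, but with the Singleton denominator replaced by the Anticode denominator $\gaussmnum{v-k+d/2-1}{d/2-1}{q}$. Concretely, writing $s=v-k$, I set
\[
  R(s)\;=\;\frac{q^{s(k-d/2+1)}\cdot\gaussmnum{s+d/2-1}{d/2-1}{q}}{\gaussmnum{s+k}{k}{q}}
\]
so that $R(s)$ is exactly the ratio of the LMRD size $M(q,k,v,d)$ to the Anticode bound (valid since $k\le v-k$). The proof then has three separate steps: (i) monotonicity in $s$, (ii) the explicit limit, and (iii) the two numerical lower bounds.

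For monotonicity, I would apply the one-step recurrence $\gaussmnum{n+1}{m}{q}=\tfrac{q^{n+1}-1}{q^{n+1-m}-1}\gaussmnum{n}{m}{q}$ to both $q$-binomial coefficients in $R(s)$. After cancellation this should give
\[
  \frac{R(s)}{R(s+1)}\;=\;\frac{q^{s+k+1}-1}{q^{k-d/2+1}\bigl(q^{s+d/2}-1\bigr)},
\]
and the inequality $R(s)\ge R(s+1)$ reduces to $q^{k-d/2+1}\ge 1$, which is immediate from the standing hypothesis $d\le 2k$ (so $k-d/2+1\ge 1$) and $q\ge 2$. For the limit, the exponents balance out, i.e.\ $s(k-d/2+1)+s(d/2-1)=sk$, so applying Lemma~\ref{lemma_q_binomial_coefficient_limit} to numerator and denominator of $R(s)$ individually gives
\[
  \lim_{s\to\infty} R(s)\;=\;\frac{1/(1/q;1/q)_{d/2-1}}{1/(1/q;1/q)_k}\;=\;\frac{(1/q;1/q)_k}{(1/q;1/q)_{d/2-1}}.
\]

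The two numerical bounds are the only step that requires a bit of care. Since $d\ge 4$ yields $d/2-1\ge 1$, monotonicity of the truncated Pochhammer gives $(1/q;1/q)_{d/2-1}\le (1/q;1/q)_1=(q-1)/q$, which immediately yields the first lower bound $\tfrac{q}{q-1}(1/q;1/q)_k$. For the final bound $\ge 2\cdot(1/2;1/2)_\infty$, the key identity is the telescoping
\[
  \frac{q}{q-1}(1/q;1/q)_k\;=\;\frac{q}{q-1}\Bigl(1-\frac{1}{q}\Bigr)\prod_{i=2}^{k}\Bigl(1-\frac{1}{q^i}\Bigr)\;=\;\prod_{i=2}^{k}\Bigl(1-\frac{1}{q^i}\Bigr),
\]
from which termwise comparison via $1-q^{-i}\ge 1-2^{-i}$ for $q\ge 2$ together with extending the product to infinity gives $\prod_{i=2}^{k}(1-q^{-i})\ge\prod_{i=2}^{\infty}(1-2^{-i})=2\cdot(1/2;1/2)_\infty$.

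The step I expect to be the only nonroutine obstacle is recognising the telescoping identity above: without it, the bound $\tfrac{q}{q-1}(1/q;1/q)_k\ge 2(1/2;1/2)_\infty$ looks awkward because $\tfrac{q}{q-1}$ decreases in $q$ while $(1/q;1/q)_k$ increases in $q$, so neither factor is monotone in the right direction. Once the product simplifies to $\prod_{i\ge 2}(1-q^{-i})$ both the monotonicity in $q$ and the worst case $q=2$ become transparent.
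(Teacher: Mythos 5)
\noindent Your proposal is correct and follows essentially the same route as the paper: the limit is obtained by applying Lemma~\ref{lemma_q_binomial_coefficient_limit} separately to $\gaussmnum{s+k}{k}{q}$ and $\gaussmnum{s+d/2-1}{d/2-1}{q}$, and your ratio $\frac{R(s)}{R(s+1)}=\frac{q^{s+k+1}-1}{q^{k-d/2+1}(q^{s+d/2}-1)}$ is exactly the paper's expression $(1+f(k))(1+f(d/2-1))^{-1}$ in closed form, so the monotonicity argument is the same computation. Your explicit derivation of the two numerical bounds (cancelling the $i=1$ factor to get $\prod_{i=2}^{k}(1-q^{-i})$ and comparing termwise with $q=2$) just fills in the details the paper compresses into one sentence.
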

\begin{proof}
  The LMRD code has cardinality $q^{(v-k)(k-d/2+1)}$ and the Anticode bound is $\gaussmnum{v}{k}{q} / \gaussmnum{v-k+d/2-1}{d/2-1}{q}$.
  From Lemma~\ref{lemma_q_binomial_coefficient_limit} we conclude
  \[
    \lim_{v\to\infty} \frac{\gaussmnum{(v-k)+k}{k}{q}}{q^{(v-k)k}}=\frac{1}{(1/q;1/q)_k}
    \text{ and }
    \lim_{v\to\infty} \frac{\gaussmnum{(v-k)+(d/2-1)}{d/2-1}{q}}{q^{(v-k)(d/2-1)}}=\frac{1}{(1/q;1/q)_{d/2-1}},
  \]
  so that the limit follows. The subsequent inequalities follow from $d\ge 4$, the monotonicity of $(1/q;1/q)_n$, and $q\ge 2$.

  It remains to show the monotonicity of the sequence
  \[
    g(v):=\frac{q^{(v-k)(k-d/2+1)}\gaussmnum{v-k+d/2-1}{d/2-1}{q}}{\gaussmnum{v}{k}{q}}.
  \]
  Using the abbreviation $s=v-k$ we define
  \[
    f(x):=\frac{\gaussmnum{s+x}{s+1}{q}}{q^x \gaussmnum{s+x}{s}{q}}
    =\frac{\prod_{i=1}^{s+1} \frac{q^{x-1+i}-1}{q^{i}-1}
    }{q^x\prod_{i=1}^{s} \frac{q^{x+i}-1}{q^{i}-1}}
    =
    \frac{\frac{q^{x}-1}{q^{s+1}-1}}{q^x}
    =
    \frac{1-q^{-x}}{q^{s+1}-1}
  \]
  and observe that $f$ is strictly monotonically increasing in $x$, so that $f(k) > f(d/2-1)$.
  Using routine manipulations of $q$-binomial coefficients we compute
  \begin{align*}
    \frac{g(v)}{g(v+1)}    =\left(1+f(k)\right)\cdot\left(1+f(d/2-1)\right)^{-1} >1.
  \end{align*}

\end{proof}

In other words the ratio between the best known lower bound and the best known upper bound for constant dimension codes is strictly greater
than $0.577576$ for all parameters and the most challenging parameters are given by $q=2$, $d=4$, and $k=\left\lfloor v/2\right\rfloor$.

Replacing the Anticode bound by the Johnson bound of Theorem~\ref{cor_johnson_opt} does not change the limit behavior of
Proposition~\ref{prop_ratio_lmrd_anticode} when $v$ tends to infinity. As stated above, we obtain the Anticode bound
if we remove the floors in Corollary~\ref{cor_johnson_opt} and replace $A_q(v-k+d/2,d;d/2)$ by $\frac{q^{v-k+d/2}-1}{q^{d/2}-1}$.
First we consider the latter weakening. Applying the lower bound of Theorem~\ref{thm:multicomponent} for $A_q(v',2k';k')$, where
$v'=tk'+r$ with $1 \le r \le k'-1$, we consider
\[
  \frac{q^{v'}-q^{k'+r}+q^{k'}-1}{q^{k'}-1} \,/\, \frac{q^{v'}-1}{q^{k'}-1}= 1-\frac{q^{k'}\cdot (q^r-1)}{q^{v'}-1}
\]
If $v'\ge 3k'$, then the subtrahend on the right hand side is at most $\frac{q^{v'/3}\cdot (q^{v'/3}-1)}{q^{v'}-1}$. Otherwise we have $2k'\le v'<3k'$,
so that $v'=2k'+r$. Since $q^{k'}\cdot (q^r-1)\cdot (q^{k'}+1)=q^{2k'+r}-q^{2k'}+q^{k'+r}-q^{k'}\le q^{2k'+r}-1$ the subtrahend on the right hand side
is at most $1/\left(q^{v'/3}+1\right)$. Thus, the ratio between the lower and the upper bound for partial spreads tends to
$1$ if $v'=v-k+d/2$ tends to infinity. Since
\begin{eqnarray*}
&&\left\lfloor \frac{q^{v}\!-\!1}{q^{k}\!-\!1} \left\lfloor \frac{q^{v\!-\!1}\!-\!1}{q^{k\!-\!1}\!-\!1} \left\lfloor \ldots
\left\lfloor \frac{q^{v\!-\!k\!+\!d/2\!+\!1}\!-\!1}{q^{d/2\!+\!1}\!-\!1} \left\lfloor\frac{q^{v-k+d/2}-1}{q^{d/2}-1}\right\rfloor \right\rfloor
\ldots \right\rfloor \right\rfloor \right\rfloor\\
&\ge& \left( \frac{q^{v}\!-\!1}{q^{k}\!-\!1} \left( \frac{q^{v\!-\!1}\!-\!1}{q^{k\!-\!1}\!-\!1} \left( \ldots
\left(\frac{q^{v-k+d/2}-1}{q^{d/2}-1}-1\right)\ldots \right)-1 \right)-1 \right)-1\\
&\ge& \frac{\gaussmnum{v}{k}{q}}{\gaussmnum{v-k+d/2-1}{d/2-1}{q}} -\left(k-d/2+1\right)\cdot \frac{\gaussmnum{v-1}{k-1}{q}}{\gaussmnum{v-k+d/2-1}{d/2-1}{q}}\\
&\ge& \frac{\gaussmnum{v}{k}{q}}{\gaussmnum{v-k+d/2-1}{d/2-1}{q}}\cdot\left(1-\frac{4\left(k-d/2+1\right)}{q^{v-k}}\right)
\end{eqnarray*}
the ratio between Corollary~\ref{cor_johnson_opt} and the Anticode bound tends to $1$ as $v$ tends to infinity.

Next, we consider the ratio between the lower bound from the first construction of Proposition~\ref{prop:nonrecursive_version}
and the Anticode bound when $l$ tends to infinity.

\begin{proposition}\label{prop:construction_1_anticode_bound_ratio}
  For integers satisfying the conditions of Proposition~\ref{prop:nonrecursive_version}, $k \le s$ and $d\le 2k$, we have
  \begin{eqnarray*}
    &&\lim_{l\to \infty }  \left(b(s)^l a(v_0)+a(s-d/2+k) \gaussmnum{l}{1}{b(s)}\right) /
    \frac{
    \gaussmnum{v_0+ls}{k}{q}
    }{
    \gaussmnum{v_0+ls-k+d/2-1}{d/2-1}{q}
    }\\
    &&=
    \frac{
    a(v_0)+ \frac{a(s-d/2+k)}{q^{s(k-d/2+1)}-1}
    }{
    q^{(v_0-k)(k-d/2+1)}
    }
    \cdot\prod\limits_{i=d/2}^k \left(1-\frac{1}{q^i}\right)
  \end{eqnarray*}
\end{proposition}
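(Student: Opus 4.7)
The plan is to split the quotient into numerator and denominator, compute the leading asymptotic behaviour of each in $l$, and then cancel the common factor $b(s)^l$.

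First, the hypotheses $k\le s$ and $d\le 2k$ force the lifted MRD size to simplify to
\[
  b(s)=M(q,k,s+k,d)=q^{s(k-d/2+1)}.
\]
Substituting $\gaussmnum{l}{1}{b(s)}=(b(s)^l-1)/(b(s)-1)$ rewrites the numerator as
\[
  b(s)^l\!\left(a(v_0)+\frac{a(s-d/2+k)}{b(s)-1}\right)-\frac{a(s-d/2+k)}{b(s)-1},
\]
so that dividing by $b(s)^l$ and letting $l\to\infty$ produces
\[
  a(v_0)+\frac{a(s-d/2+k)}{q^{s(k-d/2+1)}-1}.
\]

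For the Anticode bound in the denominator, cancellation of the first $d/2-1$ factors in the product expansions of the $q$-binomial coefficients gives
\[
  \frac{\gaussmnum{v_0+ls}{k}{q}}{\gaussmnum{v_0+ls-k+d/2-1}{d/2-1}{q}}
  =\prod_{i=d/2}^{k}\frac{q^{v_0+ls-k+i}-1}{q^i-1}.
\]
Writing each factor as
\[
  \frac{q^{v_0+ls-k+i}-1}{q^i-1}=q^{v_0+ls-k}\cdot\frac{1-q^{-(v_0+ls-k+i)}}{1-q^{-i}}
\]
exposes the leading power $q^{(v_0+ls-k)(k-d/2+1)}=q^{(v_0-k)(k-d/2+1)}\cdot b(s)^l$, while the accompanying correction $\prod_{i=d/2}^k (1-q^{-(v_0+ls-k+i)})/(1-q^{-i})$ converges to $\prod_{i=d/2}^k (1-q^{-i})^{-1}$ as $l\to\infty$, since each $q^{-(v_0+ls-k+i)}\to 0$.

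Combining the two halves, the common factor $b(s)^l$ cancels and the claimed limit
\[
  \frac{a(v_0)+a(s-d/2+k)/(q^{s(k-d/2+1)}-1)}{q^{(v_0-k)(k-d/2+1)}}\cdot\prod_{i=d/2}^{k}\bigl(1-q^{-i}\bigr)
\]
emerges. The whole proof is essentially bookkeeping; the one thing that needs care is tracking the exponent $ls(k-d/2+1)$ so that the $l$-dependence in the numerator matches exactly the $l$-dependence of the leading term of the Anticode bound, which is precisely guaranteed by $k\le s$.
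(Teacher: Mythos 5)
Your proof is correct and follows essentially the same route as the paper's: rewrite the numerator via $\gaussmnum{l}{1}{b(s)}=(b(s)^l-1)/(b(s)-1)$, normalize both numerator and the Anticode quotient by $b(s)^l=q^{ls(k-d/2+1)}$, compute the two limits separately, and divide. The only cosmetic difference is that you make the identification $b(s)=q^{s(k-d/2+1)}$ (valid under $k\le s$ and $d\le 2k$) explicit at the outset, which the paper leaves implicit.
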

\begin{proof}
For $k \le s$ and $k-d/2+1\ge 1$ we have $b(s)\ne 1$, so that
\begin{align*}
b(s)^l a(v_0)+a(s') \gaussmnum{l}{1}{b(s)}
=
q^{lsk'} a(v_0)+a(s') \frac{q^{lsk'}-1}{q^{sk'}-1}
\\
=
q^{lsk'} \left(a(v_0)+ \frac{a(s')}{q^{sk'}-1} \right) - \frac{a(s')}{q^{sk'}-1},
\end{align*}
where we abbreviate $s'=s-d/2+k$ and $k'=k-d/2+1$. Thus,
\[
  \lim_{l\to\infty} \left(b(s)^l a(v_0)+a(s') \gaussmnum{l}{1}{b(s)}\right) / q^{lsk'}
  = a(v_0)+ \frac{a(s')}{q^{sk'}-1}.
\]
Plugging in the definition of the $q$-binomial coefficients gives
\[
  \frac{\gaussmnum{v_0+ls}{k}{q}}{\gaussmnum{v_0+ls-k+d/2-1}{d/2-1}{q}}
  =\frac{\prod\limits_{i=1}^k \frac{q^{v_0+ls-k+i}-1}{q^i-1}}{\prod\limits_{i=1}^{d/2-1}\frac{q^{v_0+ls-k+i}-1}{q^i-1}}
  = \prod\limits_{i=d/2}^k \frac{q^{v_0+ls-k+i}-1}{q^i-1},
\]
so that
\[
  \lim_{l\to\infty} \frac{\gaussmnum{v_0+ls}{k}{q}}{\gaussmnum{v_0+ls-k+d/2-1}{d/2-1}{q}} / q^{lsk'}
  = \prod\limits_{i=d/2}^k \frac{q^{v_0-k+i}}{q^i-1}
  = q^{(v_0-k)k'}\cdot\prod\limits_{i=d/2}^k \frac{1}{1-\frac{1}{q^i}}.
\]
Dividing both derived limits gives the proposed result.

\end{proof}

For Example~\ref{ex_ap_1} with $d=4$ and $k=3$, we obtain a ratio of
\[\left(1597245+ \frac{A_2(7,4;3)}{4095}\right) \cdot 21 /2^{25}
\in [0.99963386, 0.99963388]
\]
for $v=13+6l$ with $l\to\infty$ using $333 \le A_2(7,4;3) \le 381$, i.e., the Anticode bound is almost met
by the underlying improved linkage construction.

\section{Codes better than the MRD bound}
\label{sec_better_than_MRD_bound}

For constant dimension codes that contain a lifted MRD code, Theorem~\ref{theo:MRD_upper_bound} gives an upper bound
which is tighter than the Johnson bound of Theorem~\ref{thm_johnson_II}. In \cite{ai2016expurgation} two infinite series
of constructions have been given where the code sizes exceed the MRD bound of Theorem~\ref{theo:MRD_upper_bound}
for $q=2$, $d=4$, and $k=3$. Given the data available from \cite{HKKW2016Tables} we mention that, besides $d=4$, $k=3$, the only
other case where the MRD bound was superseded is $A_2(8,4;4)\ge 4801>4797$, see \cite{new_lower_bounds_cdc}. Next, we
show that for $d=4$ and $k=3$ the MRD bound can be superseded for all field sizes $q$ if $v$ is large enough. For the limit
of the achievable ratio we obtain:

\begin{proposition}
  \label{prop_better_than_mrdb_q}
  For $q\ge 3$ we have
  $
    \lim\limits_{v\to\infty} \frac{A_q(v,4;3)}{q^{2v-6}+\gaussmnum{v-3}{2}{q}}\ge 1+\frac{1}{2q^3}
  $.
\end{proposition}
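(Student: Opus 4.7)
The denominator $q^{2v-6}+\gaussmnum{v-3}{2}{q}$ coincides with the MRD upper bound of Theorem~\ref{theo:MRD_upper_bound} for $d=4$ and $k=3$, since $A_q(v-3,2;2)=\gaussmnum{v-3}{2}{q}$. By Lemma~\ref{lemma_q_binomial_coefficient_limit} one has $\gaussmnum{v-3}{2}{q}/q^{2v-6}\to 1/[q(q-1)(q^2-1)]$ as $v\to\infty$, so the denominator behaves like $q^{2v-6}\bigl(1+1/[q(q-1)(q^2-1)]\bigr)(1+o(1))$. Hence the proposition reduces to producing a sequence of cdc lower bounds $L(v)$ with
\[
\liminf_{v\to\infty} \frac{L(v)}{q^{2v-6}} \;\ge\; \left(1+\frac{1}{2q^3}\right)\left(1+\frac{1}{q(q-1)(q^2-1)}\right).
\]

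My plan is to obtain $L(v)$ by applying the iterated improved linkage of Proposition~\ref{prop:nonrecursive_version} with $d=4$, $k=3$, a step $s\ge 3$ (so that $M(q,3,s+3,4)=q^{2s}$), and a carefully chosen base length $v_0$. For $v=v_0+ls$ and $l\to\infty$, dividing the linkage bound
\[
A_q(v_0+ls,4;3) \;\ge\; A_q(v_0,4;3)\,q^{2ls} + A_q(s+1,4;3)\,\frac{q^{2ls}-1}{q^{2s}-1}
\]
by $q^{2(v_0+ls)-6}$ yields
\[
\liminf_{l\to\infty} \frac{A_q(v_0+ls,4;3)}{q^{2(v_0+ls)-6}} \;\ge\; \frac{A_q(v_0,4;3)}{q^{2v_0-6}} + \frac{A_q(s+1,4;3)}{q^{2v_0-6}\,(q^{2s}-1)}.
\]

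The hard part is producing a base code of length $v_0$ with $A_q(v_0,4;3)/q^{2v_0-6}$ large enough. Since Theorem~\ref{theo:MRD_upper_bound} forbids any cdc containing an LMRD subcode from exceeding the MRD bound, the base code must in particular not contain an LMRD, which is precisely what makes this step nontrivial. A natural candidate is an Echelon-Ferrers construction combining the usual LMRD pivot $(1,1,1,0,\ldots,0)$ with further pivots of pairwise Hamming distance at least $4$, so that Lemma~\ref{lemma_d_s_d_h} controls cross-block distances, each filled by a Ferrers-rank-metric code of maximal size. A second candidate is an expurgation-augmentation scheme generalizing the $q=2$ construction of \cite{ai2016expurgation} to $q\ge 3$. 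Either way the gain over $q^{2v_0-6}$ must be counted explicitly.

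Once the base size is in hand, the remaining step is the finite inequality
\[
\frac{A_q(v_0,4;3)}{q^{2v_0-6}} + \frac{A_q(s+1,4;3)}{q^{2v_0-6}\,(q^{2s}-1)} \;\ge\; \left(1+\frac{1}{2q^3}\right)\left(1+\frac{1}{q(q-1)(q^2-1)}\right)
\]
for the chosen $v_0,s$, using a simple lower bound such as the LMRD estimate $A_q(s+1,4;3)\ge q^{2s-4}$. I would verify this by splitting into cases (small $q\in\{3,4,5\}$ versus larger $q$) and estimating term by term, noting that the right-hand side equals $1+1/(2q^3)+O(q^{-4})$ for $q$ large, so only a small gain over $q^{2v_0-6}$ at $v_0$ is ultimately needed. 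The crux of the argument is thus the LMRD-avoiding base construction; the linkage amplification and the final arithmetic are routine.
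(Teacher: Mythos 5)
Your scaffolding is the same as the paper's: identify the denominator as the MRD bound, compute its growth via Lemma~\ref{lemma_q_binomial_coefficient_limit} (your limit $1/[q(q-1)(q^2-1)]$ for $\gaussmnum{v-3}{2}{q}/q^{2v-6}$ is correct), amplify a base code by the iterated linkage of Proposition~\ref{prop:nonrecursive_version}, and finish with elementary arithmetic. However, there is a genuine gap exactly where you locate the crux: you never actually produce the base code, and neither of your two candidates is a routine fill-in. Unwinding your final inequality, the second (linkage-tail) term is of order $q^{-2v_0+2}$ and hence negligible, so you need $A_q(v_0,4;3)\ge q^{2v_0-6}\bigl(1+\tfrac{1}{2q^3}+\tfrac{1}{q^4}+O(q^{-5})\bigr)$, i.e.\ a surplus over the LMRD size of order $\tfrac12 q^{2v_0-9}$. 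A plain Echelon--Ferrers construction with pivot vectors at pairwise Hamming distance $4$ has its second-largest block of size only $q^{2v_0-10}$ (e.g.\ for the pivot $1001100\ldots0$ the Etzion--Silberstein dimension formula gives $2v_0-10$), a full factor of $q$ short, and it is not at all clear that summing the remaining, rapidly decaying blocks closes that gap for $q\ge 3$; your other candidate, extending the expurgation--augmentation method of \cite{ai2016expurgation} from $q=2$ to all $q\ge 3$, is itself a substantial unproven claim rather than a known tool.

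The paper closes this hole by importing a concrete result: $A_q(7,4;3)\ge q^8+q^5+q^4+q^2-q\ge q^8+q^5+q^4$ from \cite[Theorem~4]{MR3444245}, whose $q^5$ term is precisely the order of surplus you need at $v_0=7$. It then applies Corollary~\ref{cor_improved_linkage} with $m=7$ to get $A_q(v_0,4;3)\ge q^{2v_0-10}(q^4+q+1)$, propagates this to all $v\ge 12$ via Proposition~\ref{prop:nonrecursive_version} with $s=3$, and checks that
\[
\bigl(q^4+q+1\bigr)\Big/\frac{q^3(q^4-q^3-q^2+q+1)}{(q-1)^2(q+1)}=1+\frac{1}{q^3}-\frac{q+1}{q^2(q^4-q^3-q^2+q+1)}\ge 1+\frac{1}{2q^3}
\]
for $q\ge 3$. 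To complete your argument you would either have to cite such a base code as well or carry out and verify one of your proposed constructions quantitatively; as written, the decisive step is missing.
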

\begin{proof}
  For $q\ge 2$, \cite[Theorem~4]{MR3444245} gives
  \[
    A_q(7,4;3) \ge q^8+q^5+q^4+q^2-q \ge q^8+q^5+q^4.
  \]
  With this, we conclude
  \[
    A_q(v_0,4;3)\ge A_q(7,4;3)\cdot q^{2v_0-14}+A_q(v_0-6,4;3)\ge q^{2v_0-10}\cdot \left(q^4+q+1\right)
  \]
  from Corollary~\ref{cor_improved_linkage} choosing $m=7$. Applying
  Proposition~\ref{prop:nonrecursive_version} with $s=3$ gives
  \[
    A_q(v_0+3l,4;3)\ge q^{6l} A_q(v_0,4;3)+\frac{q^{6l}-1}{q^6-1}\ge q^{6l} A_q(v_0,4;3)
  \]
  for $v_0\in \{12,13,14\}$, so that $A_q(v,4;3)\ge q^{2v-10}\cdot \left(q^4+q+1\right)$ for all $v\ge 12$.

  From Lemma~\ref{lemma_q_binomial_coefficient_limit} we conclude
  \[
    \lim_{v\to\infty} \frac{q^{2v-6}+\gaussmnum{v-3}{2}{q}}{q^{2v-10}}=q^4+(1/q;1/q)_2
    =\frac{q^3(q^4-q^3-q^2+q+1)}{(q-1)^2(q+1)}.
  \]
  Since
  \[
    \left(q^4+q+1\right) / \frac{q^3(q^4-q^3-q^2+q+1)}{(q-1)^2(q+1)} = 1+\frac{1}{q^3}-\frac{q+1}{q^2(q^4-q^3-q^2+q+1)},
  \]
  the statement follows for $q\ge 3$.
  
\end{proof}

For $q=2$ the estimations of the proof of Proposition~\ref{prop_better_than_mrdb_q} are too crude in order to obtain a factor larger
than one. However, for the binary case better codes with moderate dimensions of the ambient space have been found by computer searches --
with the prescription of automorphisms as the main ingredient in order to reduce the computational complexity, see e.g.\ \cite{MR2796712}.

\begin{proposition}
  \label{prop_better_than_mrdb_2}
For $v \ge 19$ we have
$\frac{A_2(v,4;3)}{2^{2v-6} + \gaussmnum{v-3}{2}{2}} \ge 1.3056$.
\end{proposition}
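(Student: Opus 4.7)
The plan is to pick strong seed lower bounds for $A_2(v_0, 4; 3)$ at three consecutive seed dimensions $v_0 \in \{19, 20, 21\}$ (one per residue class modulo $3$), apply Proposition~\ref{prop:nonrecursive_version} with $s = 3$ to propagate these bounds to every $v \ge 19$, and finally verify numerically that the resulting ratio against $2^{2v-6} + \gaussmnum{v-3}{2}{2}$ stays above $1.3056$.

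For $v_0 = 19$, Example~\ref{ex_ap_1} already does the work: the Steiner structure $A_2(13, 4; 3) = 1597245$ from \cite{Braun16} combined with $A_2(7, 4; 3) \ge 333$ from the tables \cite{HKKW2016Tables} yields
\[
A_2(19, 4; 3) \;\ge\; 4096 \cdot 1597245 + 333 \;=\; 6542315853.
\]
For $v_0 = 20$ and $v_0 = 21$, I would iterate Corollary~\ref{cor_improved_linkage} starting from the best known small-dimensional lower bounds (for example $A_2(8, 4; 3) \ge 34$, $A_2(14, 4; 3)$, and $A_2(15, 4; 3)$ as recorded in \cite{HKKW2016Tables}) to produce explicit seed values, which are then handled case by case.

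Once the seeds are fixed, note that $b(3) = M(2, 3, 6, 4) = 2^6 = 64$ and $a(4) = A_2(4, 4; 3) = 1$, so Proposition~\ref{prop:nonrecursive_version} gives for any seed $v_0$ and all $l \ge 0$
\[
A_2(v_0 + 3l, 4; 3) \;\ge\; 64^l \cdot A_2(v_0, 4; 3) + \frac{64^l - 1}{63} \;\ge\; 64^l \cdot A_2(v_0, 4; 3).
\]
Using the trivial bound $\gaussmnum{v-3}{2}{2} \le 2^{2v-7}/3$, the denominator satisfies $2^{2v-6} + \gaussmnum{v-3}{2}{2} \le \tfrac{7}{6} \cdot 2^{2v-6}$. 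Substituting $v = v_0 + 3l$ (so $64^l = 2^{2(v-v_0)}$) gives the $l$-independent estimate
\[
\frac{A_2(v, 4; 3)}{2^{2v-6} + \gaussmnum{v-3}{2}{2}} \;\ge\; \frac{6}{7} \cdot \frac{A_2(v_0, 4; 3)}{2^{2v_0 - 6}}.
\]
Since $\tfrac{7}{6} \cdot 1.3056 = 1.5232$, it is enough to verify for each $v_0 \in \{19, 20, 21\}$ that $A_2(v_0, 4; 3) \ge 1.5232 \cdot 2^{2v_0 - 6}$; for $v_0 = 19$ this reduces to $6542315853 \ge 1.5232 \cdot 2^{32}$, which holds.

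The main obstacle will be producing sufficiently strong seeds at $v_0 = 20$ and $v_0 = 21$: the margin for $v_0 = 19$ is narrow, so the bounds for the other two residue classes must be proportionally strong. Should any of them fall just short of the $1.5232 \cdot 2^{2v_0 - 6}$ threshold under the simple $6/7$ estimate, I would sharpen the argument by retaining the exact value of $\gaussmnum{v-3}{2}{2}$ instead of the loose $2^{2v-7}/3$ bound (which strictly increases the lower bound on the ratio for every finite $v$) and by keeping the additive $(64^l - 1)/63$ term in the numerator; both adjustments suffice to close any borderline case.
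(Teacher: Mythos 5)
Your skeleton is exactly the paper's: seeds at $v_0\in\{19,20,21\}$, propagation via Proposition~\ref{prop:nonrecursive_version} with $s=3$, and the estimate $2^{2v-6}+\gaussmnum{v-3}{2}{2}\le \frac{7}{6}\cdot 2^{2v-6}$, which reduces everything to $A_2(v_0,4;3)\ge 1.5232\cdot 2^{2v_0-6}$ for the three seeds. The $v_0=19$ seed is handled correctly. The genuine gap is at $v_0=20$ and $v_0=21$: you never produce these seeds, and the ingredients you name would not suffice. First, $A_2(8,4;3)\ge 34$ is not a bound appearing in the paper --- the value $34$ belongs to $A_2(8,6;3)$ (the partial-spread case); the relevant bound is $A_2(8,4;3)\ge 1326$. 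Second, and more importantly, linking from $A_2(14,4;3)\ge 6241665$ or $A_2(15,4;3)\ge 24966665$ only yields seeds of size about $1.4881\cdot 2^{2v_0-6}$ (these values sit at ratio $1.488129$ relative to the LMRD size $2^{2(v_0-3)}$, cf.\ Table~\ref{table_cmp_lmrd}), giving a final ratio of about $\frac{6}{7}\cdot 1.4881\approx 1.2755<1.3056$. The missing idea is to take $m=13$ in Corollary~\ref{cor_improved_linkage} for \emph{all three} residue classes, i.e.\ $A_2(v_0,4;3)\ge 1597245\cdot 2^{2v_0-26}+A_2(v_0-12,4;3)$, so that the dominant term inherits the ratio $1597245/2^{20}=1.52325\ldots>1.5232$ of the Steiner structure $A_2(13,4;3)=1597245$; the paper completes the additive terms with $A_2(7,4;3)\ge 333$, $A_2(8,4;3)\ge 1326$, $A_2(9,4;3)\ge 5986$ and finds the minimum $1.3056442377$ at $v_0=20$.

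Your proposed fallback cannot rescue the weaker seeds: retaining the exact value of $\gaussmnum{v-3}{2}{2}$ instead of $2^{2v-7}/3$ changes the denominator by roughly $2^{v-4}$, which is a relative correction of order $2^{-v-2}$, and the additive term $(64^l-1)/63$ is similarly negligible against $64^l\cdot A_2(v_0,4;3)$. Neither can close a gap of roughly $2.4\%$. The margin in this proposition is genuinely of order $3\cdot 10^{-5}$, and it is available only through the $m=13$ linkage from the Steiner structure.
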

\begin{proof}
Applying Proposition~\ref{prop:nonrecursive_version} with $s=3$ and using $A_2(4,4;3)\ge 0$ gives
$A_2(v_0+3l,4;3) \ge A_2(v_0,4;3) \cdot 2^{6l}$
for all $v_0\ge 6$ and $l\ge 0$, so that
\begin{equation}
\label{ie_l_q}
\frac{A_2(v_0+3l,4;3)}{2^{2(v_0+3l)-6} + \gaussmnum{(v_0+3l)-3}{2}{2}}
\ge \frac{A_2(v_0,4;3)}{\frac{7}{3}\cdot 2^{2v_0-7}}.
\end{equation}
Using $A_2(7,4;3)\ge 333$ \cite{HKKW2016Tables},
$A_2(8,4;3)\ge 1326$ \cite{new_lower_bounds_cdc}, $A_2(9,4;3)\ge 5986$ \cite{new_lower_bounds_cdc}, and
$A_2(13,4;3)=1597245$ \cite{Braun16} we apply Corollary~\ref{cor_improved_linkage} with $m=13$ to obtain
lower bounds for $A_2(v_0,4;3)$ with $19 \le v_0 \le 21$. For these values of $v_0$ the minimum
of the right hand side of Inequality~(\ref{ie_l_q}) is attained at $v_0=20$ with value $1.3056442377$.
  
\end{proof}

Note that the application of Proposition~\ref{prop:nonrecursive_version} was used in a rather crude estimation in the proof
of Proposition~\ref{prop_better_than_mrdb_2}. Actually, we do not use the codewords generated by the codewords of cdc $C_2$ in
Theorem~\ref{theo:improved_linkage}, so that we might have applied \cite[Theorem~37]{silberstein2015error} directly for this part of the
proof -- similarly for Proposition~\ref{prop_better_than_mrdb_q}, which then allows to consider just one instead of $s=3$ \textit{starters}.
In the latter part of the proof of Proposition~\ref{prop_better_than_mrdb_2} the use of Corollary~\ref{cor_improved_linkage} is essential
in order to obtain large codes for medium sized dimensions of the ambient space from $A_2(13,4;3)=1597245$ and relatively good lower
bounds for small dimensions. This is a relative typical behavior of Corollary~\ref{cor_improved_linkage} and
Proposition~\ref{prop:nonrecursive_version}, i.e., the first few applications yield a significant improvement which quickly bottoms out
-- in a certain sense. As column \texttt{bklb} of Table~\ref{table_cmp_mrdb} suggests, we may slightly improve upon the value stated in
Proposition~\ref{prop_better_than_mrdb_2} by some fine-tuning effecting the omitted less significant digits.

\begin{table}[htp]
\begin{center}
\begin{tabular}{l|lll|ll|l}
$v$ & \texttt{bklb} & \texttt{mrdb} & \texttt{bkub} & \texttt{lold} & \texttt{lnew} & \texttt{ea} \\
\hline
6  &  77  &  71  &  77  &  65  &  65  &   \\
7  &  333  &  291  &  381  &  257  &  265  &  301 \\
8  &  1326  &  1179  &  1493  &  1033  &  1101  &  1117 \\
9  &  5986  &  4747  &  6205  &  4929  &  4929  &  4852 \\
10  &  23870  &  19051  &  24698  &  21313  &  21313  &  18924 \\
11  &  97526  &  76331  &  99718  &  85249  &  85257  &  79306 \\
12  &  385515  &  305579  &  398385  &  383105  &  383105  &  309667 \\
13  &  1597245  &  1222827  &  1597245  &  1532417  &  1532425  &  1287958 \\
14  &  6241665  &  4892331  &  6387029  &  6241665  &  6241665  &  4970117 \\
15  &  24966665  &  19571371  &  25562941  &  24966657  &  24966665  &  20560924 \\
16  &  102223681  &  78289579  &  102243962  &  102223681  &  102223681  &  79608330 \\
17  &  408894729  &  313166507  &  409035142  &  408894721  &  408894729  &   \\
18  &  1635578957  &  1252682411  &  1636109361  &  1635578889  &  1635578957  &   \\
19  &  6542315853  &  5010762411  &  6544674621  &  6542315597  &  6542315853  &
5200895489 \\
\end{tabular}
\caption{Lower and upper bounds for $A_2(v,4;3)$.}
\label{table_cmp_numbers}
\end{center}
\end{table}

In Tables~\ref{table_cmp_numbers}, \ref{table_cmp_lmrd}, and \ref{table_cmp_mrdb} we compare the sizes
of different constructions with the LMRD and the best known upper bound. Here \texttt{bklb} and \texttt{bkub} stand for
best known lower and upper bound respectively. The values of Theorem~\ref{theo:MRD_upper_bound} are given in column \texttt{mrdb}.
Applying Theorem~\ref{thm_original_linkage} and Theorem~\ref{theo:improved_linkage} to the best known codes give the columns
\texttt{lold} and \texttt{lnew}, respectively. The results obtained in \cite{ai2016expurgation} are stated in column \texttt{ea}.
The achieved ratio between the mentioned constructions and the MRD bound can be found in Table~\ref{table_cmp_mrdb}. Since
differences partially are beyond the given accuracy, we give absolute numbers in Table~\ref{table_cmp_numbers}. Note that the
values in column \texttt{bklb} of Table~\ref{table_cmp_mrdb} show that Proposition~\ref{prop_better_than_mrdb_2} is also valid
for $v\ge 16$, while we have a smaller ratio for $v<16$. The relative advantage
over lifted MRD codes is displayed in Table~\ref{table_cmp_lmrd}.

\begin{table}[htp]
\begin{center}
\begin{tabular}{l|lll|ll|l}
$v$ & \texttt{bklb} & \texttt{mrdb} & \texttt{bkub} & \texttt{lold} & \texttt{lnew} & \texttt{ea} \\
\hline
6  &  1.203125  &  1.109375  &  1.203125  &  1.015625  &  1.015625  &   \\
7  &  1.300781  &  1.136719  &  1.488281  &  1.003906  &  1.035156  &  1.175781 \\
8  &  1.294922  &  1.151367  &  1.458008  &  1.008789  &  1.075195  &  1.090820 \\
9  &  1.461426  &  1.158936  &  1.514893  &  1.203369  &  1.203369  &  1.184570 \\
10  &  1.456909  &  1.162781  &  1.507446  &  1.300842  &  1.300842  &  1.155029 \\
11  &  1.488129  &  1.164719  &  1.521576  &  1.300797  &  1.300919  &  1.210114 \\
12  &  1.470623  &  1.165691  &  1.519718  &  1.461430  &  1.461430  &  1.181286 \\
13  &  1.523252  &  1.166179  &  1.523252  &  1.461427  &  1.461434  &  1.228292 \\
14  &  1.488129  &  1.166423  &  1.522786  &  1.488129  &  1.488129  &  1.184968 \\
15  &  1.488129  &  1.166545  &  1.52367  &  1.488129  &  1.488129  &  1.225527 \\
16  &  1.523252  &  1.166606  &  1.523554  &  1.523252  &  1.523252  &  1.186257 \\
17  &  1.523252  &  1.166636  &  1.523775  &  1.523252  &  1.523252  &   \\
18  &  1.523252  &  1.166651  &  1.523746  &  1.523252  &  1.523252  &   \\
19  &  1.523252  &  1.166659  &  1.523801  &  1.523252  &  1.523252  &  1.210928 \\
\end{tabular}
\caption{Lower and upper bounds for $A_2(v,4;3)$ divided by the size of a corresponding lifted MRD code.}
\label{table_cmp_lmrd}
\end{center}
\end{table}

To conclude this section, we remark that an application of Corollary~\ref{cor_improved_linkage} with $2k \le m \le v-k$ using
a lifted MRD in the cdc $C_1$ cannot generate a code that exceeds the MRD bound of Theorem~\ref{theo:MRD_upper_bound}.

\begin{lemma}
Using the notation of Theorem~\ref{theo:improved_linkage}, let $k \le \min\{v_1-k,v_2-k+d/2\}$, $C_r$ a linear MRD code, $d_r=d_1/2$, and
$C_1$ contains a lifted MRD code (in $\gaussmset{\mathbb{F}_q^{v_1}}{k}$). Then, the codes constructed in
Theorem~\ref{theo:improved_linkage} contain a lifted MRD code (in $\gaussmset{\mathbb{F}_q^{v_1+v_2-k+d/2}}{k}$).
\end{lemma}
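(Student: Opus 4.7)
The plan is to trace the linkage construction of Theorem~\ref{theo:improved_linkage} applied to the lifted MRD subcode of $C_1$ and verify that the image is itself a lifted MRD code in the enlarged Grassmannian. Write the given LMRD inside $C_1$ as $\{\tau^{-1}(I_k \mid A) : A \in \mathcal{A}\}$, where $\mathcal{A}\subseteq \mathbb{F}_q^{k \times (v_1-k)}$ is a linear MRD code of rank distance $d_1/2$; since $k\le v_1-k$, Theorem~\ref{thm_MRD_size} gives $|\mathcal{A}|=q^{(v_1-k)(k-d_1/2+1)}$. Under the linkage, the codeword $U=\tau^{-1}(I_k\mid A)$ together with $M\in C_r$ produces $\tau^{-1}(I_k\mid A\mid M)$, i.e.\ the lifting of the block matrix $(A\mid M)\in \mathbb{F}_q^{k\times (v_1-k+v_2-k+d/2)}$. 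So the candidate LMRD sitting inside the linkage code is the lifting of $\mathcal{B} := \{(A\mid M):A\in\mathcal{A},\, M\in C_r\}$.

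The next step is to show that $\mathcal{B}$ is a linear MRD code of rank distance $d_1/2$. Linearity is immediate since $\mathcal{B}$ is the block-external direct sum of the linear codes $\mathcal{A}$ and $C_r$. Using $d_r=d_1/2$ and $k\le v_2-k+d/2$ in Theorem~\ref{thm_MRD_size} gives $|C_r|=q^{(v_2-k+d/2)(k-d_1/2+1)}$, so that
\[|\mathcal{B}| = |\mathcal{A}|\cdot |C_r| = q^{(v_1-k+v_2-k+d/2)(k-d_1/2+1)}.\]
This is precisely the MRD bound of Theorem~\ref{thm_MRD_size} for $k$-row rank metric codes of width $v_1-k+v_2-k+d/2\ge k$ and rank distance $d_1/2$.

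Finally, I would verify the rank distance by a standard block argument: for distinct $(A_1\mid M_1),(A_2\mid M_2)\in \mathcal{B}$, either $A_1\ne A_2$, forcing $\rk(A_1-A_2\mid M_1-M_2)\ge \rk(A_1-A_2)\ge d_1/2$, or $A_1=A_2$ with $M_1\ne M_2$, so that $\rk(0\mid M_1-M_2)=\rk(M_1-M_2)\ge d_r=d_1/2$. Combined with the cardinality match this forces $\mathcal{B}$ to be MRD, and therefore its lifting $\{\tau^{-1}(I_k\mid A\mid M)\}$ is an LMRD in $\gaussmset{\mathbb{F}_q^{v_1+v_2-k+d/2}}{k}$ that is contained in the constructed code. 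The only genuine subtlety is the dimension bookkeeping: the hypothesis $k\le\min\{v_1-k,\,v_2-k+d/2\}$ is exactly what places $\mathcal{A}$, $C_r$, and the target larger MRD code into the \emph{tall rectangle} regime of Theorem~\ref{thm_MRD_size}, so that the sizes of $\mathcal{A}$ and $C_r$ multiply to the MRD bound in the combined width; once this is set up, the rank-distance step is routine.
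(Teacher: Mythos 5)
Your proposal is correct and follows essentially the same route as the paper: identify the lifted MRD subcode of $C_1$, observe that the linkage turns its codewords into liftings of concatenated matrices $(A\mid M)$, and check that this concatenation is a linear rank metric code whose cardinality meets the MRD bound of Theorem~\ref{thm_MRD_size} for the combined width $v_1+v_2-2k+d/2$. The paper states the rank-distance property of the concatenated code without the explicit two-case block argument you give, but the substance is identical.
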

\begin{proof}
Let $\{\tau^{-1}(I_{k \times k} \mid M) : M \in R\} \subseteq C_1$ be the lifted MRD code in $C_1$. Since $R$ is a
$(k \times (v_1-k),d_1/2)_q$ MRD code, we have $\#R=q^{(v_1-k)(k-d_1/2+1)}$. The first set of the construction contains
\[\{\tau^{-1}(I_{k \times k} \mid M \mid A) : M \in R, A \in C_r\}\]
in which $\{(M \mid A) : M\in R, A \in C_r\}$ forms a $(k \times (v_1+v_2-2k+d/2),N,d_r)_q$ rank metric code of
size $N=q^{(v_1+v_2-2k+d/2)(k-d_r+1)}$, hence it is a maximum rank metric code.

\end{proof}

\begin{table}[htp]
\begin{center}
\begin{tabular}{l|lll|ll|l}
$v$ & \texttt{bklb} & \texttt{mrdb} & \texttt{bkub} & \texttt{lold} & \texttt{lnew} & \texttt{ea} \\
\hline
6  &  1.084507  &  1.0  &  1.084507  &  0.915493  &  0.915493  &   \\
7  &  1.144330  &  1.0  &  1.309278  &  0.883162  &  0.910653  &  1.034364 \\
8  &  1.124682  &  1.0  &  1.266327  &  0.876166  &  0.933842  &  0.947413 \\
9  &  1.261007  &  1.0  &  1.307141  &  1.038340  &  1.038340  &  1.022119 \\
10  &  1.252953  &  1.0  &  1.296415  &  1.118734  &  1.118734  &  0.993334 \\
11  &  1.277672  &  1.0  &  1.306389  &  1.116833  &  1.116938  &  1.038975 \\
12  &  1.261589  &  1.0  &  1.303705  &  1.253702  &  1.253702  &  1.013378 \\
13  &  1.306190  &  1.0  &  1.306190  &  1.253176  &  1.253182  &  1.053263 \\
14  &  1.275806  &  1.0  &  1.305519  &  1.275806  &  1.275806  &  1.015900 \\
15  &  1.275673  &  1.0  &  1.306140  &  1.275672  &  1.275673  &  1.050561 \\
16  &  1.305712  &  1.0  &  1.305972  &  1.305712  &  1.305712  &  1.016845 \\
17  &  1.305678  &  1.0  &  1.306127  &  1.305678  &  1.305678  &   \\
18  &  1.305661  &  1.0  &  1.306085  &  1.305661  &  1.305661  &   \\
19  &  1.305653  &  1.0  &  1.306124  &  1.305653  &  1.305653  &  1.037945 \\
\end{tabular}
\caption{Lower and upper bounds for $A_2(v,4;3)$ divided by the corresponding MRD bound.}
\label{table_cmp_mrdb}
\end{center}
\end{table}

\section{Conclusion}
\label{sec_conclusion}
In this paper we have considered the maximal sizes of constant dimension codes. With respect to constructive lower bounds we have
improved the so-called linkage construction, which then yields the best known codes for many parameters, see Footnote~\ref{footnote_linkage}.
With respect to upper bounds
there is a rather clear picture. The explicit Corollary~\ref{cor_johnson_opt}, which refers back to bounds for partial spreads, is the best
known parametric bound in the case of $d\neq 2\min\{k,v-k\}$, while Theorem~\ref{thm_ahlswede} or the linear programming method may
possibly yield improvements. Since Theorem~\ref{thm_ahlswede} implies the Johnson bound and so Corollary~\ref{cor_johnson_opt}, it would
be worthwhile to study whether it can be strictly sharper than Theorem~\ref{thm_johnson_II} for $d\neq 2\min\{k,v-k\}$ at all. Compared to
Corollary~\ref{cor_johnson_opt}, the only two known improvements are given for the specific parameters from Theorem~\ref{thm_specific_bound_1}
and Proposition~\ref{prop_specific_bound_2}. In the case of partial spreads we have reported the current state-of-the-art
mentioning that further improvements are far from being unlikely.

In general we have shown that the ratio between the best-known lower and upper bound is strictly larger than $0.577576$ for all
parameters. The bottleneck is formed by the parameters $q=2$, $d=4$, and $k=\left\lfloor v/2\right\rfloor$, where no known method can properly
improve that factor, see Footnote~\ref{footnote_linkage} for the linkage construction. For $d=4$, $k=3$ and general field sizes $q$ we have applied the improved linkage construction in order to show
that $A_q(v,d;k)$ is by a factor, depending on $q$, larger than the MRD bound for sufficiently large dimensions $v$.

\section*{Acknowledgement}
The authors would like to thank
Harout Aydinian for providing an enlarged proof of Theorem~\ref{thm_ahlswede},
Natalia Silberstein for explaining the restriction $3k \le v$ in \cite[Corollary~39]{silberstein2015error},
Heide Gluesing-Luerssen for clarifying the independent origin of the linkage construction,
and Alfred Wassermann for discussions about the asymptotic results of Frankl and R{\"o}dl.


\begin{thebibliography}{10}

\bibitem{agrell2000upper}
E.~Agrell, A.~Vardy, and K.~Zeger.
\newblock Upper bounds for constant-weight codes.
\newblock {\em IEEE Transactions on Information Theory}, 46(7):2373--2395,
  2000.

\bibitem{ahlswede2009error}
R.~Ahlswede and H.~Aydinian.
\newblock On error control codes for random network coding.
\newblock In {\em Network Coding, Theory, and Applications, 2009. NetCod'09.
  Workshop on}, pages 68--73. IEEE, 2009.

\bibitem{ahlswede2001perfect}
R.~Ahlswede, H.~K. Aydinian, and L.~H. Khachatrian.
\newblock On perfect codes and related concepts.
\newblock {\em Designs, Codes and Cryptography}, 22(3):221--237, 2001.

\bibitem{MR1768542}
R.~Ahlswede, N.~Cai, S.-Y.~R. Li, and R.~W. Yeung.
\newblock Network information flow.
\newblock {\em IEEE Transactions on Information Theory}, 46(4):1204--1216,
  2000.

\bibitem{ai2016expurgation}
J.~Ai, T.~Honold, and H.~Liu.
\newblock The expurgation-augmentation method for constructing good plane
  subspace codes.
\newblock {\em arXiv preprint 1601.01502}, 2016.

\bibitem{MR3063504}
C.~Bachoc, A.~Passuello, and F.~Vallentin.
\newblock Bounds for projective codes from semidefinite programming.
\newblock {\em Advances in Mathematics of Communications}, 7(2):127--145, 2013.

\bibitem{beutelspacher1975partial}
A.~Beutelspacher.
\newblock Partial spreads in finite projective spaces and partial designs.
\newblock {\em Mathematische Zeitschrift}, 145(3):211--229, 1975.

\bibitem{blackburn2012asymptotic}
S.~R. Blackburn and T.~Etzion.
\newblock The asymptotic behavior of grassmannian codes.
\newblock {\em IEEE Transactions on Information Theory}, 58(10):6605--6609,
  2012.

\bibitem{Braun16}
M.~Braun, T.~Etzion, P.~R.~J. {\"O}sterg{\aa}rd, A.~Vardy, and A.~Wassermann.
\newblock Existence of $q$-analogs of steiner systems.
\newblock {\em Forum of Mathematics, Pi}, 4, 2016.

\bibitem{new_lower_bounds_cdc}
M.~Braun, P.~R.~J. {\"O}sterg{\aa}rd, and A.~Wassermann.
\newblock New lower bounds for binary constant-dimension subspace codes.
\newblock {\em Experimental Mathematics}, 0(0):1--5, 0.

\bibitem{delsarte1973algebraic}
P.~Delsarte.
\newblock {\em An algebraic approach to the association schemes of coding
  theory}.
\newblock PhD thesis, Philips Research Laboratories, 1973.

\bibitem{delsarte1978hahn}
P.~Delsarte.
\newblock Hahn polynomials, discrete harmonics, and $t$-designs.
\newblock {\em SIAM Journal on Applied Mathematics}, 34(1):157--166, 1978.

\bibitem{nets_and_spreads}
D.~Drake and J.~Freeman.
\newblock Partial $t$-spreads and group constructible $(s,r,\mu)$-nets.
\newblock {\em Journal of Geometry}, 13(2):210--216, 1979.

\bibitem{spreadsk3}
S.~El-Zanati, H.~Jordon, G.~Seelinger, P.~Sissokho, and L.~Spence.
\newblock The maximum size of a partial $3$-spread in a finite vector space
  over ${G}{F}(2)$.
\newblock {\em Designs, Codes and Cryptography}, 54(2):101--107, 2010.

\bibitem{MR2589964}
T.~Etzion and N.~Silberstein.
\newblock Error-correcting codes in projective spaces via rank-metric codes and
  {F}errers diagrams.
\newblock {\em IEEE Transactions on Information Theory}, 55(7):2909--2919,
  2009.

\bibitem{MR3015712}
T.~Etzion and N.~Silberstein.
\newblock Codes and designs related to lifted {MRD} codes.
\newblock {\em IEEE Transactions on Information Theory}, 59(2):1004--1017,
  2013.

\bibitem{MR2810308}
T.~Etzion and A.~Vardy.
\newblock Error-correcting codes in projective space.
\newblock {\em IEEE Transactions on Information Theory}, 57(2):1165--1173,
  2011.

\bibitem{MR829351}
P.~Frankl and V.~R\"odl.
\newblock Near perfect coverings in graphs and hypergraphs.
\newblock {\em European Journal of Combinatorics}, 6(4):317--326, 1985.

\bibitem{MR867648}
P.~Frankl and R.~M. Wilson.
\newblock The {E}rd{\H{o}}s-{K}o-{R}ado theorem for vector spaces.
\newblock {\em Journal of Combinatorial Theory, Series A}, 43(2):228--236,
  1986.

\bibitem{gabidulin1985theory}
E.~Gabidulin.
\newblock Theory of codes with maximum rank distance.
\newblock {\em Problemy Peredachi Informatsii}, 21(1):3--16, 1985.

\bibitem{gluesing2015cyclic}
H.~Gluesing-Luerssen, K.~Morrison, and C.~Troha.
\newblock Cyclic orbit codes and stabilizer subfields.
\newblock {\em Advances in Mathematics of Communications}, 9(2):177--197, 2015.

\bibitem{MR3543532}
H.~Gluesing-Luerssen and C.~Troha.
\newblock Construction of subspace codes through linkage.
\newblock {\em Advances in Mathematics of Communications}, 10(3):525--540,
  2016.

\bibitem{HKKW2016Tables}
D.~Heinlein, M.~Kiermaier, S.~Kurz, and A.~Wassermann.
\newblock Tables of subspace codes.
\newblock {\em arXiv preprint 1601.02864}, 2016.

\bibitem{heinlein2017new}
D.~Heinlein and S.~Kurz.
\newblock A new upper bound for subspace codes.
\newblock {\em arXiv preprint 1703.08712}, 2017.

\bibitem{MR3444245}
T.~Honold and M.~Kiermaier.
\newblock On putative {$q$}-analogues of the {F}ano plane and related
  combinatorial structures.
\newblock In {\em Dynamical systems, number theory and applications}, pages
  141--175. World Sci. Publ., Hackensack, NJ, 2016.

\bibitem{MR3329980}
T.~Honold, M.~Kiermaier, and S.~Kurz.
\newblock Optimal binary subspace codes of length $6$, constant dimension $3$
  and minimum subspace distance $4$.
\newblock In {\em Topics in finite fields}, volume 632 of {\em Contemp. Math.},
  pages 157--176. Amer. Math. Soc., Providence, RI, 2015.

\bibitem{honold2016partial}
T.~Honold, M.~Kiermaier, and S.~Kurz.
\newblock Partial spreads and vector space partitions.
\newblock {\em arXiv preprint 1611.06328}, 2016.

\bibitem{johnson1962new}
S.~Johnson.
\newblock A new upper bound for error-correcting codes.
\newblock {\em IRE Transactions on Information Theory}, 8(3):203--207, 1962.

\bibitem{khaleghi2009subspace}
A.~Khaleghi, D.~Silva, and F.~Kschischang.
\newblock Subspace codes.
\newblock In {\em IMA International Conference on Cryptography and Coding},
  pages 1--21. Springer, 2009.

\bibitem{fano_aut}
M.~Kiermaier, S.~Kurz, and A.~Wassermann.
\newblock The order of the automorphism group of a binary $q$-analog of the
  fano plane is at most two.
\newblock {\em Designs, Codes and Cryptography}, to appear.

\bibitem{MR2796712}
A.~Kohnert and S.~Kurz.
\newblock Construction of large constant dimension codes with a prescribed
  minimum distance.
\newblock In {\em Mathematical methods in computer science}, volume 5393 of
  {\em Lecture Notes in Computer Science}, pages 31--42. Springer, Berlin,
  2008.

\bibitem{MR2451015}
R.~K\"otter and F.~R. Kschischang.
\newblock Coding for errors and erasures in random network coding.
\newblock {\em IEEE Transactions on Information Theory}, 54(8):3579--3591,
  2008.

\bibitem{kurz2017packing}
S.~Kurz.
\newblock Packing vector spaces into vector spaces.
\newblock {\em The Australasian Journal of Combinatorics}, 68(1):122--130,
  2017.

\bibitem{kurzspreads}
S.~Kurz.
\newblock Improved upper bounds for partial spreads.
\newblock {\em Designs, Codes and Cryptography}, to appear.

\bibitem{MR0465510}
F.~J. MacWilliams and N.~J.~A. Sloane.
\newblock {\em The theory of error-correcting codes. {II}}.
\newblock {N}orth-Holland Publishing Co., Amsterdam-New York-Oxford, 1977.
\newblock {N}orth-Holland Mathematical Library, Vol. 16.

\bibitem{nastase2016maximum}
E.~N{\u{a}}stase and P.~Sissokho.
\newblock The maximum size of a partial spread in a finite projective space.
\newblock {\em arXiv preprint 1605.04824}, 2016.

\bibitem{segre1964teoria}
B.~Segre.
\newblock Teoria di galois, fibrazioni proiettive e geometrie non
  desarguesiane.
\newblock {\em Annali di Matematica Pura ed Applicata}, 64(1):1--76, 1964.

\bibitem{silberstein2015error}
N.~Silberstein and A.-L. Trautmann.
\newblock Subspace codes based on graph matchings, ferrers diagrams, and
  pending blocks.
\newblock {\em IEEE Transactions on Information Theory}, 61(7):3937--3953,
  2015.

\bibitem{silva2008rank}
D.~Silva, F.~Kschischang, and R.~K\"otter.
\newblock A rank-metric approach to error control in random network coding.
\newblock {\em IEEE Transactions on Information Theory}, 54(9):3951--3967,
  2008.

\bibitem{silva2009metrics}
D.~Silva and F.~R. Kschischang.
\newblock On metrics for error correction in network coding.
\newblock {\em IEEE Transactions on Information Theory}, 55(12):5479--5490,
  2009.

\bibitem{tonchev1998codes}
V.~D. Tonchev.
\newblock Codes and designs.
\newblock {\em Handbook of coding theory}, 2:1229--1267, 1998.

\bibitem{wang2003linear}
H.~Wang, C.~Xing, and R.~Safavi-Naini.
\newblock Linear authentication codes: bounds and constructions.
\newblock {\em IEEE Transactions on Information Theory}, 49(4):866--872, 2003.

\bibitem{xia2009johnson}
S.-T. Xia and F.-W. Fu.
\newblock Johnson type bounds on constant dimension codes.
\newblock {\em Designs, Codes and Cryptography}, 50(2):163--172, 2009.

\end{thebibliography}

\end{document}